\definecolor{darkblue}{rgb}{0,0,0.6}
\newtheorem*{rep@theorem}{\rep@title}
\newcommand{\newreptheorem}[2]{%
\newenvironment{rep#1}[1]{%
 \def\rep@title{#2 \ref{##1}}%
 \begin{rep@theorem}}%
 {\end{rep@theorem}}}
\newtheorem{proposition}{Proposition}[section]
\newtheorem{theorem}[proposition]{Theorem}
\newtheorem*{theorem*}{Theorem}
\newtheorem{corollary}[proposition]{Corollary}
\newtheorem{lemma}[proposition]{Lemma}
\newtheorem{thmx}{Theorem}
\crefname{thmx}{Theorem}{Theorems}
\theoremstyle{definition}
\newtheorem{definition}[proposition]{Definition}
\newtheorem{question}[proposition]{Question}
\theoremstyle{remark}
\newtheorem{remark}[proposition]{Remark}
\newtheorem*{remark*}{Remark}
\numberwithin{equation}{section}
\newcommand{\R}{\mathbb{R}}
\newcommand{\Z}{\mathbb{Z}}
\newcommand{\wh}{\widehat}
\newcommand{\wt}{\widetilde}
\newcommand{\ol}{\overline}
\newcommand{\M}{\mathcal{M}}
\newcommand{\co}{\colon}
\DeclareMathOperator{\Hom}{Hom}
\DeclareMathOperator{\id}{Id}
\DeclareMathOperator{\Id}{Id}
\DeclareMathOperator{\image}{Im}
\DeclareMathOperator{\im}{Im}
\DeclareMathOperator{\Wh}{Wh}
\DeclareMathOperator{\hAut}{hAut}
\DeclareMathOperator{\hEq}{hEq}
\DeclareMathOperator{\chEq}{chEq}
\DeclareMathOperator{\PD}{PD}
\DeclareMathOperator{\interior}{Int}
\DeclareMathOperator{\hCob}{hCob}
\newcommand{\bsm}{\left(\begin{smallmatrix}}
\newcommand{\esm}{\end{smallmatrix}\right)}
\def\benum{\begin{clist}{(a)}}
\def\eenum{\end{clist}}
\newenvironment{clist}[1]
{\begin{enumerate}[\normalfont #1]}
{\end{enumerate}}
\LetLtxMacro\Oldfootnote\footnote
\begin{document}

\title[Generalised doubles and simple homotopy types]{Generalised doubles and simple homotopy types of high dimensional manifolds}

\author{Csaba Nagy}
\address{School of Mathematics and Statistics, University of Glasgow, U.K.}
\email{csaba.nagy@glasgow.ac.uk}

\author{John Nicholson}
\address{School of Mathematics and Statistics, University of Glasgow, U.K.}
\email{john.nicholson@glasgow.ac.uk}

\author{Mark Powell}
\address{School of Mathematics and Statistics, University of Glasgow, U.K.}
\email{mark.powell@glasgow.ac.uk}

\subjclass[2020]{Primary 57N65, 57Q10; Secondary 19J10.  }
\keywords{Manifolds, simple homotopy equivalence.}

\begin{abstract}
We characterise the set of fundamental groups for which there exist $n$-manifolds that are $h$-cobordant (hence homotopy equivalent) but not simple homotopy equivalent, when $n$ is sufficiently large. In particular, for $n \ge 12$ even, we show that examples exist for any finitely presented group $G$ such that the involution on the Whitehead group $\Wh(G)$ is nontrivial. This expands on previous work, where we constructed the first examples of even-dimensional manifolds that are homotopy equivalent but not simple homotopy equivalent. Our construction is based on doubles of thickenings, and a key ingredient of the proof is a formula for the Whitehead torsion of a homotopy equivalence between such manifolds.
\end{abstract}

\maketitle

\vspace{-5mm}

\section{Introduction}

Two finite CW-complexes are said to be \textit{simple homotopy equivalent} if they are related by a sequence of expansions and collapses of cells~\cite{Whitehead-she,Co73}. This gives an equivalence relation which interpolates between homotopy and homeomorphism in the sense that  homeomorphic implies simple homotopy equivalent implies homotopy equivalent \cite{Ch74}.
This notion has proven extremely useful in manifold topology and lies behind the $s$-cobordism theorem, which is the basis for the vast majority of manifold classification results in dimension at least four.

Lens spaces give examples of smooth $n$-manifolds in all odd dimensions $n \ge 3$ that are homotopy equivalent but not simple homotopy equivalent. Examples in all even dimensions $n \ge 4$ were constructed in previous work of the authors \cite{paper1}, with the caveat that the manifolds are topological in dimension four. The examples constructed there are homotopy equivalent to $L \times S^1$ for some lens space $L$, and consequently have fundamental groups $C_\infty \times C_m$ for some $m \ge 2$. 
In this article, we construct new classes of manifolds that are homotopy equivalent but not simple homotopy equivalent.
A key motivation is the following question.  All of our manifolds will be assumed to be smooth, however our results apply equally in the PL or topological categories. 

\begin{question}\label{question-1.1}
For which finitely presented groups $G$ and homomorphisms $w \colon G \to \{\pm 1\}$ are there $n$-manifolds with fundamental group $G$ and orientation character $w$ that are homotopy equivalent but not simple homotopy equivalent?
\end{question}

For a group $G$, Whitehead~\cites{Whitehead-simplicial-spaces,Whitehead-incidence,Whitehead-combinatorial,Whitehead-she} defined the abelian group $\Wh(G)$, and for a homotopy equivalence $f \colon X \to Y$ between finite CW complexes, with $G= \pi_1(Y)$, defined the Whitehead torsion $\tau(f) \in \Wh(G)$. He also proved that a homotopy equivalence $f$ is simple if and only if $\tau(f)=0$. Consequently, for a positive answer to \cref{question-1.1}, it is necessary that $\Wh(G) \neq 0$. As we only consider homotopy equivalences between $n$-manifolds, a stronger condition can be given in terms of $\Wh(G,w)$, the Whitehead group $\Wh(G)$ equipped with the canonical involution $x \mapsto \ol{x}$ induced by the involution on $\Z G$ determined by $g \mapsto w(g)g^{-1}$.
Namely, it is necessary that $\mathcal{J}_n(G,w) \neq 0$ (see \cref{prop:WT-man}), where we use the notation
\[
\begin{aligned}
\mathcal{J}_n(G,w) &:= \left\{ y \in \Wh(G,w) \mid y = -(-1)^n\ol{y} \right\} \leq \Wh(G,w) , \\
\mathcal{I}_n(G,w) &:= \left\{ x - (-1)^n\ol{x} \mid x \in \Wh(G,w) \right\} \leq \mathcal{J}_n(G,w).
\end{aligned}
\]

Our main result is that in high dimensions a slightly stronger condition, the nonvanishing of $\mathcal{I}_n(G,w)$, is sufficient. In fact, this condition precisely characterises when manifolds that are $h$-cobordant but not simple homotopy equivalent exist.

\begin{thmx} \label{thmx:doubles1}
Let $n=9$ or $n \ge 11$, let $G$ be a finitely presented group, and let $w \colon G \rightarrow \left\{ \pm 1 \right\}$ be a homomorphism. Then there is a pair of $n$-manifolds with fundamental group $G$ and orientation character $w$ that are $h$-cobordant but not simple homotopy equivalent if and only if $\mathcal{I}_n(G,w) \neq 0$.
\end{thmx}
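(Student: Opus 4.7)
The proof separates into the two directions of the biconditional: the ``only if'' is a duality computation, while the ``if'' requires constructing $h$-cobordant manifolds that realise a prescribed Whitehead torsion class.

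\emph{Necessity.} Suppose $(W; M_0, M_1)$ is an $h$-cobordism between closed $n$-manifolds with fundamental group $G$ and orientation character $w$. Set $x := \tau(W, M_0) \in \Wh(G)$. Poincar\'e--Lefschetz duality for $(W, \partial W)$ with $w$-twisted coefficients (Milnor's duality for $h$-cobordism torsions) yields $\tau(W, M_1) = (-1)^n\ol{x}$. Composing the inclusion $M_0 \hookrightarrow W$ with a homotopy inverse of $M_1 \hookrightarrow W$ produces a homotopy equivalence $f \co M_0 \to M_1$ whose Whitehead torsion is
\[
\tau(f) \;=\; \tau(W, M_0) - \tau(W, M_1) \;=\; x - (-1)^n\ol{x} \;\in\; \mathcal{I}_n(G, w).
\]
If $M_0$ and $M_1$ are not simple homotopy equivalent, then some such $f$ has $\tau(f) \neq 0$, so $\mathcal{I}_n(G, w) \neq 0$.

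\emph{Sufficiency.} Assume $\mathcal{I}_n(G, w) \neq 0$ and fix $x \in \Wh(G, w)$ with $x - (-1)^n\ol{x} \neq 0$. I would proceed in three steps. First, construct a closed smooth $n$-manifold $M_0$ with fundamental group $G$ and orientation character $w$; the natural candidate from the paper is a generalised double of a thickening, obtained by taking a finite 2-complex modelling $(G,w)$, thickening it to a compact manifold with boundary, and doubling across the boundary. Second, invoke the realisation theorem for Whitehead torsions of $h$-cobordisms, valid for closed manifolds of dimension at least $5$, to produce an $h$-cobordism $(W; M_0, M_1)$ with $\tau(W, M_0) = x$. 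Third, the necessity calculation then gives $\tau(f) = x - (-1)^n\ol{x} \neq 0$ for the resulting homotopy equivalence, so $M_0$ and $M_1$ are $h$-cobordant but not simple homotopy equivalent.

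The main obstacle is the first step: producing a generalised double $M_0$ that realises an arbitrary finitely presented $(G,w)$ and that is compatible with the paper's torsion formula for homotopy equivalences between such doubles. The dimension restriction $n = 9$ or $n \ge 11$ presumably tracks exactly the dimensions in which the required thickenings and surgeries exist with enough room to carry out the construction and to apply the Whitehead-torsion formula advertised in the abstract. Pure $s$-cobordism theory would already give the existence of some $h$-cobordant non-simple pair for all $n \ge 5$; the point of routing the proof through generalised doubles is to install the technology used for the stronger results of the paper about homotopy equivalent (not merely $h$-cobordant) manifolds, and most of the care in this theorem goes into checking that the double $M_0$ has the correct $(\pi_1, w)$ and admits an $h$-cobordism with prescribed torsion produced by modifying one half of the double.
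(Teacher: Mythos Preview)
Your necessity argument is correct and matches the paper's. The sufficiency argument, however, has a genuine gap that is precisely the reason for both the dimension restriction and the paper's main construction.

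You produce an $h$-cobordism $(W;M_0,M_1)$ with $\tau(W,M_0)=x$, hence a homotopy equivalence $f\colon M_0\to M_1$ with $\tau(f)=x-(-1)^n\ol{x}\ne 0$. But this only shows that \emph{this particular} $f$ is not simple; it does not show that $M_0$ and $M_1$ fail to be simple homotopy equivalent. Any other homotopy equivalence $g\colon M_0\to M_1$ has $\tau(g)\in\tau(f)+T(M_0)$, where $T(M_0)=\{\tau(h)\mid h\in\hAut(M_0)\}$. So $M_0\not\simeq_s M_1$ if and only if $\tau(f)\notin T(M_0)$, equivalently $0\notin\tau(f)+T(M_0)$. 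For a trivial double over a $2$-complex the paper's formula only gives $T(M_0)\subseteq\mathcal{I}_n(G,w)$, and there is no reason this inclusion is proper; your nonzero element $x-(-1)^n\ol x$ could perfectly well lie in $T(M_0)$. Your remark that ``pure $s$-cobordism theory would already give the existence of some $h$-cobordant non-simple pair for all $n\ge 5$'' is therefore not justified.

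The paper resolves this by choosing $M_0$ so that $T(M_0)=\{0\}$. It takes a closed $k$-manifold $K$ with $\pi_1(K)\cong G$ and orientation character $w$ (this needs $k\ge 4$), and sets $M_0=K\times S^{n-k}$ (or any orientable $S^{n-k}$-bundle over $K$). \Cref{thmx:sb} then says that every homotopy self-equivalence of $M_0$ is simple, provided $n-k>k$ and $n-k$ is odd; this forces $T(M_0)=\{0\}$, so $\mathcal{I}_n(G,w)\setminus T(M_0)$ is nonempty and \cref{prop:TU-cond}\,\ref{prop:TU-cond-a} gives $|\mathcal{M}^{\hCob}_s(M_0)|>1$. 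The constraints $k\ge 4$, $n-k>k$, and $n-k$ odd are exactly what produce the range $n=9$ or $n\ge 11$ (take $k=4$ for $n$ odd, $k=5$ for $n$ even). So the dimension hypothesis is not about ``room for thickenings and surgeries'' but about arranging $K$ to be a manifold with the correct $1$-type and the sphere fibre to be odd-dimensional, so that the torsion formula forces $T(M_0)=\{0\}$.
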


Recall that a cobordism $(W;M,N)$ is an \emph{$h$-cobordism} if the inclusion maps $M \to W$ and $N \to W$ are homotopy equivalences. 
In particular, $h$-cobordant but not simple homotopy equivalent manifolds are also examples of homotopy equivalent but not simple homotopy equivalent manifolds.
\cref{thmx:doubles1} will be complemented by \cref{thmx:doubles2}, which gives a sufficient (but possibly not necessary) condition for the existence of manifolds that are homotopy equivalent, but neither simple homotopy equivalent, nor $h$-cobordant. 
The implications of these results for the existence of homotopy equivalent but not simple homotopy equivalent manifolds will be summarised in \cref{ss:group-classification}, and in particular in \cref{table:q2}.

\cref{thmx:doubles1} can be used to construct examples of manifolds that are homotopy but not simple homotopy equivalent, provided that $\mathcal{I}_n(G,w) \ne 0$. If $G$ is torsion free and satisfies the Farrell Jones conjecture, then $\Wh(G)=0$ and so $\mathcal{I}_n(G,w)=0$ for all orientation characters $w$. 
We therefore consider groups $G$ that have nontrivial torsion.
It was shown by Wall \cite{Wa74} that, if $G$ is a finite group and $w=1$, then the involution is trivial when restricted to $\Wh'(G)$, the free part of $\Wh(G)$. This can be used to show that the involution is trivial for a range of finite groups, including finite abelian groups \cite[Theorem 1]{Ba77} and finite $3$-manifold groups \cite[p742]{KS92}.

If $n$ is even, then $\mathcal{I}_n(G,w) \ne 0$ if and only if the involution on $\Wh(G,w)$ is nontrivial.
It is not straightforward to find groups with this property and this explains why it was so much harder to find examples in even dimensions.
Nonetheless the involution has been shown to be nontrivial for groups of the form $C_{\infty} \times C_m$ for certain $m \ge 2$ (see \cite[Theorem 5.15 (i)]{paper1} and \cite[p.~421]{Mi66}) and for certain finite $p$-groups (see \cite[Proposition 24]{Ol80}, \cite[Example 8.11]{Ol88}), the smallest of which has order $3^5=243$.
Using the latter examples, we now obtain the following.

\begin{corollary}\label{cor:finite-pi1-examples}
    Let $n \ge 12$ be even. Then there exist orientable $n$-manifolds with finite fundamental group that are homotopy equivalent but not simple homotopy equivalent. 
\end{corollary}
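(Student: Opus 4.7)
The plan is to deduce the corollary directly from \cref{thmx:doubles1} by choosing $G$ to be one of the finite $p$-groups of Oliver with nontrivial involution on $\Wh(G)$, and $w = 1$ the trivial orientation character. First I would note that the dimension hypothesis $n \ge 12$ satisfies $n \ge 11$, so \cref{thmx:doubles1} applies: it suffices to produce a finite group $G$ and a homomorphism $w \colon G \to \{\pm 1\}$ such that $\mathcal{I}_n(G,w) \neq 0$, since the resulting $h$-cobordant pair is in particular homotopy equivalent but not simple homotopy equivalent.

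Next I would unpack the condition $\mathcal{I}_n(G,w) \neq 0$ when $n$ is even. By definition,
\[
\mathcal{I}_n(G,w) = \{ x - (-1)^n \ol{x} \mid x \in \Wh(G,w)\} = \{x - \ol{x} \mid x \in \Wh(G,w)\},
\]
so $\mathcal{I}_n(G,w)$ is precisely the image of the map $x \mapsto x - \ol{x}$. This image vanishes if and only if $\ol{x} = x$ for every $x \in \Wh(G,w)$, i.e.\ if and only if the involution on $\Wh(G,w)$ is trivial. Hence for $n$ even the hypothesis of \cref{thmx:doubles1} reduces to: the involution on $\Wh(G,w)$ is nontrivial.

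I would then invoke the result of Oliver cited in the text (\cite[Proposition 24]{Ol80}, \cite[Example 8.11]{Ol88}), which provides a finite $p$-group $G$ (the smallest having order $3^5 = 243$) for which the canonical involution on $\Wh(G)$ associated to the trivial orientation character $w = 1$ is nontrivial. For this choice we have $\mathcal{I}_n(G,1) \neq 0$, so \cref{thmx:doubles1} produces a pair of $n$-manifolds with fundamental group $G$, orientation character $w = 1$ (hence orientable), that are $h$-cobordant but not simple homotopy equivalent. Since $h$-cobordant manifolds are homotopy equivalent, this gives the required examples.

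There is essentially no obstacle beyond the input results: the heavy lifting is entirely in \cref{thmx:doubles1} and in Oliver's computation of the involution on $\Wh$ of finite $p$-groups. The only subtlety worth checking is the identification of $\mathcal{I}_n(G,w) \neq 0$ with nontriviality of the involution when $n$ is even, which is immediate from the definitions.
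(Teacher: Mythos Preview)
Your proposal is correct and follows exactly the approach indicated in the paper: the text immediately preceding the corollary explicitly states that for $n$ even, $\mathcal{I}_n(G,w) \ne 0$ is equivalent to nontriviality of the involution on $\Wh(G,w)$, and then points to Oliver's finite $p$-groups as the input for \cref{thmx:doubles1}. Your write-up simply makes this deduction explicit.
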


As mentioned above, lens spaces give examples in all odd dimensions $n \ge 3$ since their fundamental groups are finite cyclic.
\cref{cor:finite-pi1-examples} demonstrates the utility of \cref{thmx:doubles1} and sits in contrast to the examples constructed in \cite{paper1}, which were all homotopy equivalent to $L \times S^1$ for some lens space $L$ and so had fundamental groups of the form $C_\infty \times C_m$.

\subsection{Simple homotopy manifold sets} \label{ss:shms}

To study the difference between homotopy equivalence and simple homotopy equivalence, we introduced the \emph{simple homotopy manifold set} of a closed $n$-manifold $M$ in \cite{paper1}. This is the set of $n$-manifolds homotopy equivalent to $M$ up to simple homotopy equivalence: 
\[
\M^h_s(M) \coloneqq \{ \text{$n$-manifolds } N \mid N \simeq M \} / \simeq_s
\]
where we write~$\simeq$ for homotopy equivalence and ~$\simeq_s$ for simple homotopy equivalence. Hence there is a pair of manifolds (with a given fundamental group and orientation character) that are homotopy equivalent but not simple homotopy equivalent if and only if there is an $M$ with $|\M^h_s(M)| > 1$. 

To understand $\M^h_s(M)$, it is helpful to also consider the following variations:
\[
\begin{aligned}
\M^{\hCob}_s(M) & \coloneqq  \left\{ \text{$n$-manifolds } N \mid N \text{ is $h$-cobordant to } M \right\} / \simeq_s \\
\M^h_{s,\hCob}(M) & \coloneqq  \left\{ \text{$n$-manifolds } N \mid N \simeq M \right\} / \langle \simeq_s, \hCob \rangle
\end{aligned}
\]
where $\langle \simeq_s, \hCob \rangle$ denotes the equivalence relation generated by simple homotopy equivalence and $h$-cobordism (see \cite[Theorem E]{paper1}). In particular, if $|\M^{\hCob}_s(M)| > 1$ or $|\M^h_{s,\hCob}(M)| > 1$, then $|\M^h_s(M)| > 1$. By \cite[Proposition 4.22]{paper1} the converse also holds when $n \geq 5$. 

To state the results about simple homotopy manifold sets that we will need, we recall that the Tate cohomology group $\wh{H}^{n+1}(C_2;\Wh(G,w))$ is canonically identified with $\mathcal{J}_n(G,w) / \mathcal{I}_n(G,w)$, and we denote the quotient map by 
\[
\pi \colon \mathcal{J}_n(G,w) \rightarrow \wh{H}^{n+1}(C_2;\Wh(G,w)) .
\]
We will also make use of the homomorphism $\psi \colon L_{n+1}^h(\Z G, w) \rightarrow \wh{H}^{n+1}(C_2;\Wh(G,w))$ from the Ranicki--Rothenberg exact sequence (see \cite{Shaneson-GxZ}, \cite[\S9]{Ranicki-80-I}). Finally, for an $n$-manifold $M$ with fundamental group $G$ and orientation character $w \colon G \rightarrow \left\{ \pm 1 \right\}$, we introduce the following notation: 
\[
\begin{aligned}
T(M) &\coloneqq  \left\{ \tau(g) \mid g \in \hAut(M) \right\} \subseteq \mathcal{J}_n(G,w) \\
U(M) &\coloneqq \left\{ \pi(\tau(g)) \mid g \in \hAut(M) \right\} \subseteq \wh{H}^{n+1}(C_2;\Wh(G,w)) 
\end{aligned}
\]
where $\hAut(M)$ is the group of homotopy automorphisms of $M$ (and we used \cref{prop:WT-man}).

The following result, which was proved in \cite[Propositions 4.20--21]{paper1}, will be our main tool for determining when a simple homotopy manifold set is nontrivial or trivial.

\begin{proposition} \label{prop:TU-cond}
Let $n \geq 5$ and let $M$ be an $n$-manifold. Let $G = \pi_1(M)$ with orientation character $w \colon G \rightarrow \left\{ \pm 1 \right\}$. Then
\benum
\item\label{prop:TU-cond-a} $|\M^{\hCob}_s(M)| > 1$ if and only if $\mathcal{I}_n(G,w) \setminus T(M)$ is nonempty.
\item\label{prop:TU-cond-b} If $\image(\psi) \setminus U(M)$ is nonempty, then $|\M^h_{s,\hCob}(M)| > 1$. 
\item\label{prop:TU-cond-c} If $|\M^h_{s,\hCob}(M)| > 1$, then $\wh{H}^{n+1}(C_2;\Wh(G,w)) \neq 0$. 
\eenum
\end{proposition}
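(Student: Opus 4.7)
The plan is to reduce both statements to a computation of Whitehead torsions, using the $s$-cobordism theorem for part~(a), and the Ranicki--Rothenberg and surgery exact sequences for parts~(b) and~(c). The foundational observation is that the set of Whitehead torsions of homotopy equivalences $M \to M'$ forms a single coset of $T(M)$ in $\Wh(G,w)$, since any two such equivalences differ by precomposition with an element of $\hAut(M)$. In particular, $M' \simeq_s M$ if and only if $\tau(f) \in T(M)$ for some (equivalently, any) homotopy equivalence $f \colon M \to M'$.

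For part~(a), the $s$-cobordism theorem, applicable since $n \ge 5$, realises every $x \in \Wh(G,w)$ as $\tau(W, M)$ for a unique $h$-cobordism $(W; M, M')$ up to diffeomorphism rel $M$. Using the torsion-duality formula for $h$-cobordisms together with the factorisation $M \hookrightarrow W \xleftarrow{\simeq} M'$, the induced homotopy equivalence $M \to M'$ has torsion $x - (-1)^n\overline{x}$, which sweeps out precisely $\mathcal{I}_n(G,w)$ as $x$ varies over $\Wh(G,w)$. Combined with the foundational observation, this identifies $\mathcal{M}^{\hCob}_s(M)$ with the pointed set $\mathcal{I}_n(G,w)/T(M)$, from which (a) follows.

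For parts~(b) and~(c), the plan is to construct a well-defined map
\[
\Phi \colon \mathcal{M}^h_{s,\hCob}(M) \longrightarrow \wh{H}^{n+1}(C_2;\Wh(G,w))/U(M), \qquad [M'] \mapsto \pi(\tau(f)) + U(M),
\]
where $f \colon M \to M'$ is any homotopy equivalence. Well-definedness follows from the foundational observation together with part~(a): changing $f$ alters $\tau(f)$ by an element of $T(M)$ (absorbed by $U(M)$ after applying $\pi$), while replacing $M'$ by a simple-homotopy-equivalent or $h$-cobordant manifold alters $\tau(f)$ by an element of $T(M) + \mathcal{I}_n(G,w)$, which has image in $U(M)$. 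The claim is that $\Phi$ is injective with image $\image(\psi)/U(M)$. Injectivity follows from part~(a): if $\pi(\tau(f)) \in U(M)$, one can modify $f$ so that $\tau(f) \in \mathcal{I}_n(G,w)$, which is realised by an $h$-cobordism from $M$ to some $M''$ with $M'' \simeq_s M'$, giving $[M'] = [M]$. Surjectivity onto $\image(\psi)/U(M)$ uses the surgery and Ranicki--Rothenberg exact sequences to realise each class in $\image(\psi)$ as $\pi(\tau(f))$ for some homotopy equivalence $f \colon M \to M'$. Part~(b) then follows from surjectivity, and part~(c) from injectivity, since vanishing of the target of $\Phi$ forces $\mathcal{M}^h_{s,\hCob}(M)$ to consist of a single point.

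The main obstacle I anticipate is the surjectivity step: realising an abstract class in $\image(\psi) \subseteq \wh{H}^{n+1}(C_2;\Wh(G,w))$ as the image under $\pi \circ \tau$ of an honest homotopy equivalence $M \to M'$ requires a careful use of the $L^h$-surgery exact sequence, which parametrises homotopy-equivalent (but not necessarily $h$-cobordant) manifolds, together with a geometric interpretation of the Ranicki--Rothenberg boundary map $\psi$.
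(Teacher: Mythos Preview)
The paper does not prove this proposition; it is quoted from \cite[Propositions 4.20--21]{paper1} and used as a black box throughout. So there is no in-paper argument to compare your proposal against.

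That said, your overall strategy is the natural one and is almost certainly what is carried out in \cite{paper1}: part~(a) via the $s$-cobordism theorem together with the duality formula $\tau(f) = -\tau(W,M) + (-1)^n\overline{\tau(W,M)}$ (\cref{prop:WT-hcob}), and parts~(b)--(c) via the projection $\pi$ to Tate cohomology combined with the Ranicki--Rothenberg sequence. Your identification of the surjectivity step in (b) as the crux is accurate.

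One point that needs more care is your ``foundational observation.'' The set $T(M) = \{\tau(g) : g \in \hAut(M)\}$ is in general only a subset of $\Wh(G,w)$, not a subgroup, because the composition formula $\tau(g' \circ g) = \tau(g') + g'_*(\tau(g))$ involves the twist by $g'_*$. Likewise, for two homotopy equivalences $f, f' \colon M \to M'$ one has $\tau(f') - \tau(f) \in f_*(T(M))$ rather than $T(M)$, so speaking of ``cosets'' and forming quotients like $\mathcal{I}_n(G,w)/T(M)$ or $\wh{H}^{n+1}/U(M)$ is not literally correct. The cleanest formulation avoids quotients: for (a), argue directly that some $M'$ $h$-cobordant to $M$ fails to be simple homotopy equivalent to $M$ iff some element of $\mathcal{I}_n(G,w)$ lies outside $T(M)$; for (c), use the contrapositive ($\wh{H}^{n+1}=0$ means $\mathcal{J}_n = \mathcal{I}_n$, so every $\tau(f)$ is realised by an $h$-cobordism, collapsing $\mathcal{M}^h_{s,\hCob}(M)$ to a point). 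These reformulations preserve your argument while sidestepping the subgroup issue.
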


\subsection{Homotopy equivalences of doubles} \label{ss:intro:doubles}

By \cref{prop:TU-cond}, to show that a simple homotopy manifold set is nontrivial for some $M$, we need to understand the involution on the Whitehead group of $\pi_1(M)$, and the group of homotopy automorphisms $\hAut(M)$. In this paper we will be focusing on the latter. Our aim is to develop a systematic method for constructing manifolds $M$, with arbitrary predetermined fundamental group $G$ and orientation character $w$, such that we have control over the Whitehead torsion of homotopy automorphisms of $M$. To achieve this, we will study doubles and homotopy equivalences between them. 

Our central result is a formula for the Whitehead torsion of such a homotopy equivalence (\cref{theorem:SP-manifolds-and-tau-intro}), which we will discuss below. By applying this formula to certain doubles, we get sufficient conditions for the existence of an $M$ with nontrivial $\mathcal{M}^{\hCob}_s(M)$ or $\mathcal{M}^h_{s,\hCob}(M)$ (and hence $\mathcal{M}^h_s(M)$), expressed in terms of the involution on $\Wh(G,w)$. These, and a few further applications of \cref{theorem:SP-manifolds-and-tau-intro}, will be discussed in \cref{ss:appl}.

We start by introducing the doubles that we will study. Fix positive integers $n, k$ such that $n \geq \max(6,2k+2)$. Let $K$ be a $k$-complex, i.e.\ a finite $k$-dimensional CW complex, and let $T$ be an $n$-dimensional thickening of $K$, i.e.\ an $n$-manifold with boundary together with a simple homotopy equivalence $f_T \colon K \rightarrow T$ (see \cref{s:thick}). Then a \textit{double} over $K$ is a manifold of the form $M = T \cup_{\id_{\partial T}} T$ (a trivial double), $M = T \cup_g T$ where $g \colon \partial T \rightarrow \partial T$ is a diffeomorphism (a twisted double), or $M = T \cup W \cup T$ where $W$ is an $h$-cobordism between two copies of $\partial T$ (a generalised double). In each case $M$ comes equipped with a canonical map $\varphi \colon K \rightarrow M$, which is the composition of $f_T$ and the inclusion of the first component $T \rightarrow M$. 

If $M$ is an $n$-manifold and $K$ is a $k$-complex, then we will call a map $\varphi \colon K \rightarrow M$ a \emph{polarisation} of $M$, and the pair $(M,\varphi)$ a \emph{polarised manifold}. We will say that $(M,\varphi)$ has a \emph{trivial/twisted/generalised double structure} if, for some thickening $T$ of $K$, $M$ has a decomposition as above such that the canonical map $K \rightarrow M$ is homotopic to $\varphi$. 

We will obtain the following recognition criterion for generalised doubles (see \cref{prop:double-conn}).

\begin{proposition} \label{prop:double-conn-intro}
Let $(M,\varphi)$ be a polarised manifold. Then $(M,\varphi)$ has a generalised double structure if and only if $\varphi$ is $\left\lfloor \frac{n}{2} \right\rfloor$-connected.
\end{proposition}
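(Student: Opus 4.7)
For the ($\Leftarrow$) direction, suppose $M = T \cup W \cup T'$ is a generalised double. Then $\varphi$ factors up to homotopy as $K \xrightarrow{f_T} T \hookrightarrow M$, with $f_T$ a simple homotopy equivalence, so it suffices to show $T \hookrightarrow M$ is $\lfloor n/2 \rfloor$-connected. The complement $\overline{M \setminus T} = W \cup_{\partial T'} T'$ deformation retracts via the $h$-cobordism $W$ onto the $k$-complex $T' \simeq K$. A standard handle-theoretic argument -- attaching to $T$ the dual handles of $T'$, all of which have index $\geq n - k$ -- shows that $(M, T)$ is $(n-k-1)$-connected. Since $n \geq 2k+2$, we have $n - k - 1 \geq \lfloor n/2 \rfloor$, so $\varphi$ is $\lfloor n/2 \rfloor$-connected.

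For ($\Rightarrow$), assume $\varphi$ is $\lfloor n/2 \rfloor$-connected. First we realise $\varphi$ geometrically: by general position (using $2k + 1 \leq n - 1$), $\varphi$ is homotopic to an embedding $K \hookrightarrow M$, and we let $T$ be a smooth regular neighbourhood, an $n$-dimensional thickening of $K$ with $K \to T \hookrightarrow M$ homotopic to $\varphi$. Let $N = \overline{M \setminus \mathrm{int}(T)}$, so $\partial N = \partial T$. Excision gives $H_\ast(N, \partial T) \cong H_\ast(M, T) = 0$ in degrees $\leq \lfloor n/2 \rfloor$ (with $\Z\pi_1$-coefficients), and Poincar\'e--Lefschetz duality then forces $H^j(N) = 0$ for $j \geq \lceil n/2 \rceil \geq k + 1$, so $N$ has the chain-homotopy type of a $k$-complex.

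The heart of the proof is to promote this homotopical data into a geometric splitting $N = W \cup T'$. We take a handle decomposition of $N$ on $\partial T$ and, using the vanishing of the relative homology together with handle trading in the stable range $n \geq 2k + 2$, eliminate all handles of index $\leq \lfloor n/2 \rfloor$. Viewed dually from the resulting top boundary, the remaining handles have dual index $\leq \lfloor n/2 \rfloor$; collecting the dual handles of index $\leq k$ produces an embedded thickening $T' \subset N$ of a $k$-complex $K'$, and a chain-level comparison (using $N \simeq K$ and that $\pi_1$ is unchanged throughout) identifies $K'$ with $K$ and $T'$ with a simple homotopy copy of $T$. The inclusion $T' \hookrightarrow N$ is then a homotopy equivalence, which forces the residual region $W$ between $T$ and $T'$ to be an $h$-cobordism, yielding $M = T \cup W \cup T'$.

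The main obstacle is this last step: the handle trading must be precise enough that the dual thickening really is a thickening of $K$ (not merely of some complex of the same homotopy type), and the intermediate region is an honest $h$-cobordism. The hypothesis $n \geq \max(6, 2k+2)$ is used critically throughout, both for the initial general-position embedding and for Whitney-trick cancellations in the handle trading.
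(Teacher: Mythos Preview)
Your two implication labels are swapped (you write ``$\Leftarrow$'' when you assume a double structure and deduce connectivity, and ``$\Rightarrow$'' for the converse), but set that aside.

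The direction \emph{double $\Rightarrow$ $\lfloor n/2\rfloor$-connected} is fine; your dual-handle argument is essentially the homotopy-excision argument the paper uses, phrased in handle language.

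The other direction has a real gap. After your handle trading you produce an embedded thickening $T'\subset N$ of some $k$-complex $K'$, and you assert that ``a chain-level comparison \dots identifies $K'$ with $K$ and $T'$ with a simple homotopy copy of $T$.'' But a generalised double structure requires the two halves to be \emph{diffeomorphic} copies of the \emph{same} thickening $T$. Knowing $K'\simeq K$ (even simply) and $T'\simeq T$ at the chain level does not give this: two thickenings of the same complex are equivalent if and only if they have isomorphic stable normal bundles (Wall's classification), and your chain argument says nothing about bundles. This is exactly the obstacle you flag at the end, and you have not resolved it.

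The paper's proof sidesteps the handle trading entirely. It applies Wall's embedding theorem twice: once to embed a thickening $T\hookrightarrow M$ realising $\varphi$, then, after showing directly that the complement $C$ is homotopy equivalent to $K$ (via Poincar\'e duality and the connectivity hypothesis), a second time to embed a thickening $T'\hookrightarrow C$. The key point is that both embedded thickenings pull back the same stable normal bundle $\varphi^*(\nu_M)$ from $M$, so Wall's classification forces $T'\approx T$; the region between them is then an $h$-cobordism by an easy Mayer--Vietoris/duality argument. This bundle comparison is the missing ingredient in your sketch, and it is what makes the argument go through without any delicate handle bookkeeping. (Your appeal to ``general position'' for the initial embedding of the CW complex $K$ is also too casual; one really needs Wall's embedding/thickening theory here, not just transversality.)
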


It follows that the class of generalised doubles is closed under homotopy equivalence, i.e.\ if $(M,\varphi)$ has a generalised double structure and $M \simeq N$, then $(N,\psi)$ also has a generalised double structure for some $\psi$. Note that the same is not true for the classes of twisted and trivial doubles. 

Given a polarised manifold $(M,\varphi)$ such that $\varphi$ is $\left\lfloor \frac{n}{2} \right\rfloor$-connected, we can define an invariant $\tau(M,\varphi)$, called the \emph{Whitehead torsion of $(M,\varphi)$}, as follows. There is a thickening $f_T \colon K \rightarrow T$ of~$K$ and an embedding $i \colon T \rightarrow M$ such that $i \circ f_T \simeq \varphi$. Let $C = M \setminus i(\interior T)$, then by general position and because $n \geq 2k+2$, the map $\varphi$ is homotopic to a map that factors as $K \to C \to M$, with the first map a homotopy equivalence $\varphi' \colon K \rightarrow C$. In particular the inclusion $C \rightarrow M$ induces an isomorphism $\pi_1(C) \cong \pi_1(M)$, so we can identify $\Wh(\pi_1(C))$ with $\Wh(\pi_1(M))$.

\begin{definition}
Let $\tau(M,\varphi) = \tau(\varphi') \in \Wh(\pi_1(M))$.
\end{definition}

This invariant will show up in the formula of \cref{theorem:SP-manifolds-and-tau-intro} as an error term. We will also prove in \cref{cor:tau-triv-double} that if $(M,\varphi)$ has a trivial double structure, then $\tau(M,\varphi)=0$. The converse does not hold, but if $\tau(M,\varphi)=0$, then $(M,\varphi)$ has a twisted double structure, and more generally see \cref{prop:characterises-twisted-double-structures} for a result characterising precisely when a twisted double structure exists.

Next we consider homotopy equivalences between doubles. If $(M,\varphi)$ is a polarised manifold and $\varphi$ is $\left\lfloor \frac{n}{2} \right\rfloor$-connected, then we can find a CW decomposition of $M$ such that $\varphi$ is the embedding of its $k$-skeleton. Suppose that $N$ is another $n$-manifold with an $\left\lfloor \frac{n}{2} \right\rfloor$-connected polarisation $\psi \colon L \rightarrow N$ for a $k$-dimensional CW complex $L$. Then by cellular approximation any homotopy equivalence $f \colon M \rightarrow N$ restricts to a map $\alpha \colon K \rightarrow L$. This $\alpha$ is also a homotopy equivalence if $(M,\varphi)$ and $(N,\psi)$ satisfy some mild restrictions on their dimensions or double structures; in this case we call them \emph{split polarised} (SP) manifolds (see \cref{def:SP}). The following key theorem then allows us to compute the Whitehead torsion of $f$ from that of $\alpha$ (see \cref{theorem:heq-diag}). 

\begin{thmx}\label{theorem:SP-manifolds-and-tau-intro}
Suppose that $(M,\varphi)$ and $(N,\psi)$ are split polarised manifolds and $f \colon M \rightarrow N$ is a homotopy equivalence. Then
\[
\tau(f) = \tau(\alpha) - (-1)^n \overline{\tau(\alpha)} + \tau(N,\psi) - f_*(\tau(M,\varphi)) \in \Wh(\pi_1(N),w_N)
\]
where $\alpha \colon K \rightarrow L$ is the restriction of $f$, $\Wh(\pi_1(L))$ is identified with $\Wh(\pi_1(N))$ via $\psi_*$ and $w_N$ is the orientation character of $N$.
\end{thmx}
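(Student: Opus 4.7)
The plan is to exploit the generalised double decomposition of $M$ and $N$ supplied by \cref{prop:double-conn-intro} and to compute $\tau(f)$ by splitting it into contributions from the two halves of the double, with Poincar\'e--Lefschetz duality on the closed manifolds producing the conjugation term $-(-1)^n\overline{\tau(\alpha)}$. Concretely, I fix embedded thickenings $i_M\colon T_M \hookrightarrow M$ and $i_N\colon T_N \hookrightarrow N$ representing $\varphi$ and $\psi$, with complements $C_M = M \sm i_M(\interior T_M)$ and $C_N = N \sm i_N(\interior T_N)$. By the definition of $\tau(M,\varphi)$ and $\tau(N,\psi)$, there are homotopy equivalences $\varphi'_M\colon K \to C_M$ and $\varphi'_N\colon L \to C_N$ with $\tau(\varphi'_M) = \tau(M,\varphi)$ and $\tau(\varphi'_N) = \tau(N,\psi)$. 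The split polarised hypothesis (\cref{def:SP}) then lets me homotope $f$ so that it restricts to homotopy equivalences $f_T\colon T_M \to T_N$, $f_C\colon C_M \to C_N$, and $f_\partial\colon \partial T_M \to \partial T_N$, with $f_T$ corresponding, via the simple homotopy equivalences $f_{T_M}, f_{T_N}$, to the map $\alpha$.

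The standard Mayer--Vietoris sum formula for Whitehead torsions of maps of homotopy pushouts then yields
\[
\tau(f) \;=\; \tau(f_T) \;+\; \tau(f_C) \;-\; \tau(f_\partial)
\]
in $\Wh(\pi_1(N), w_N)$, and the composition formula combined with the simplicity of $f_{T_M}, f_{T_N}$ immediately collapses $\tau(f_T)$ to $\tau(\alpha)$. The crux is then the evaluation of $\tau(f_C) - \tau(f_\partial)$. Running the composition formula through the homotopy commutative square $f_C \circ \varphi'_M \simeq \varphi'_N \circ \alpha$ exposes the correction terms $\tau(N,\psi) - f_*\tau(M,\varphi)$ together with one further torsion. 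The key input to compute this remaining torsion is Poincar\'e--Lefschetz duality at the chain level on $M$ and $N$: the $\Z\pi_1$-chain complex of the pair $(T,\partial T)$ is chain equivalent, after a degree shift by $n$ and application of the involution determined by $w_N$, to that of $C$. The chain map induced by $f_C$ is then the conjugate dual of that induced by $f_T$; the standard duality identity for the Whitehead torsion of chain equivalences arising from Poincar\'e duality therefore converts the remaining torsion into $-(-1)^n\overline{\tau(\alpha)}$ with a boundary discrepancy that cancels the $-\tau(f_\partial)$ in the sum formula, yielding the stated expression.

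The principal obstacle is this Poincar\'e--Lefschetz duality step. One must choose compatible CW structures on $M$ and $N$ in which $T$ is a regular neighbourhood of a $k$-subcomplex representing $\varphi$, invoke the chain-level duality over $\Z\pi_1$ with the correct involution and sign conventions determined by $w_N$, and carefully track how it interacts with the identifications $(\varphi'_M)_*, (\varphi'_N)_*$ of Whitehead groups and with the boundary contribution $\tau(f_\partial)$. Once this is in place, everything else is a formal manipulation with the composition and sum formulas for Whitehead torsion, and the appearance of the four terms in the formula has a transparent interpretation: the first two come from the two halves of the double (via duality), while $\tau(N,\psi)$ and $-f_*\tau(M,\varphi)$ are precisely the error terms measuring the failure of $\varphi'_N, \varphi'_M$ to be simple.
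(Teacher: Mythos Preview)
Your proposal has a genuine gap at the step where you claim the split polarised hypothesis lets you homotope $f$ so that it restricts to homotopy equivalences $f_T\colon T_M \to T_N$, $f_C\colon C_M \to C_N$, and $f_\partial\colon \partial T_M \to \partial T_N$. Nothing in \cref{def:SP} produces such a geometric splitting of $f$. After cellular approximation one can certainly arrange $f(K)\subseteq L$, and from there $f(T_M)\subseteq T_N$ is plausible since $T_N$ retracts to $L$; but there is no mechanism forcing $f(C_M)\subseteq C_N$. That would require something like $f^{-1}(\interior T_N)\subseteq \interior T_M$, i.e.\ a transversality/engulfing statement for the hypersurface $\partial T_N$, and a general homotopy equivalence between closed manifolds need not be homotopic to one that respects a given codimension-$0$ decomposition. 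Without this, the Mayer--Vietoris sum formula for $\tau(f)$ is unavailable, and the rest of the argument does not get off the ground.

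The paper sidesteps this obstacle by never attempting a space-level splitting of $f$. Instead it observes (\cref{theorem:tau-pd}) that for an SP manifold one can choose a CW structure with no cells in degree $\lfloor n/2\rfloor$ (or, in the odd case with (SP3), with vanishing middle differential), so that the cellular chain complex $C_*(M)$ \emph{splits} as $C^{\ell}_*(M)\oplus C^u_*(M)$ in the sense of \cref{def:split}. The point is that for split chain complexes \emph{any} chain map automatically decomposes into lower and upper halves (\cref{lem:split-cheq}); no geometric work is required. The role of Poincar\'e duality is then purely algebraic: since $f$ has degree $\pm 1$, the induced chain map lies in $\chEq(C_*(M)_{\theta^{-1}},C_*(N))_{\PD^M,\PD^N}$, and the general chain-level computation of \cref{lem:cheq-diag} gives the formula, with the error terms identified as $\tau(M,\varphi)$ and $\tau(N,\psi)$ via \cref{theorem:tau-pd}\,\eqref{item:theorem:tau-pd-c}. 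Your Poincar\'e--Lefschetz intuition for the conjugation term is correct, but it is implemented at the chain level rather than through a map of pairs.
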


To prove \cref{theorem:SP-manifolds-and-tau-intro}, we start by considering the algebraic version of the problem. We consider \emph{split chain complexes}, which are those that split as the direct sum of their lower and upper halves (see \cref{def:lu,def:split}). Provided that they are also based and satisfy a version of Poincar\'e duality, we compute the Whitehead torsion of a chain homotopy equivalence between two such chain complexes in terms of the Whitehead torsion of the restriction of the chain map to the lower half (see \cref{lem:cheq-diag}). The formula contains two error terms, which are intrinsic to the two chain complexes, namely they are given by the Whitehead torsion of the restriction of the Poincar\'e duality map. To derive \cref{theorem:SP-manifolds-and-tau-intro} from this, we show that if $(M,\varphi)$ is an SP manifold, then its cellular chain complex splits and its Whitehead torsion $\tau(M,\varphi)$ is equal to the associated error term (see \cref{theorem:tau-pd}). 

\subsection{Applications of \cref{theorem:SP-manifolds-and-tau-intro}} \label{ss:appl}

In the main applications of \cref{theorem:SP-manifolds-and-tau-intro} we restrict to special types of doubles where we have control over the right hand side of the formula. In particular, when $\tau(M,\varphi)=0$ (which can be ensured by taking $M$ to be a trivial double), then we have $\tau(f) \in \mathcal{I}_n(G,w)$ for every homotopy automorphism $f$ of $M$. By \cref{prop:TU-cond} \eqref{prop:TU-cond-b} we have the following (see \cref{thm:hausmann}). 

\begin{theorem} \label{thmx:doubles2}
Let $n \geq 5$, let $G$ be a finitely presented group and let $w \colon G \rightarrow \left\{ \pm 1 \right\}$ be such that $\psi \colon L_{n+1}^h(\Z G, w) \rightarrow \wh{H}^{n+1}(C_2;\Wh(G,w))$ is nontrivial. Then there exists an $n$-manifold $M$ with fundamental group $G$ and orientation character $w$ such that $|\M^h_{s,\hCob}(M)|>1$.
\end{theorem}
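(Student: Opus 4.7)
The plan is to realise the sufficient condition in \cref{prop:TU-cond}\eqref{prop:TU-cond-b} using a trivial double. Specifically, I would construct a closed $n$-manifold $M$ with $\pi_1(M) \cong G$ and orientation character $w$ whose polarisation comes from a trivial double structure, use \cref{theorem:SP-manifolds-and-tau-intro} to constrain the Whitehead torsions of its self homotopy equivalences, and then invoke the hypothesis $\psi \ne 0$ to exhibit a class in $\image(\psi) \setminus U(M)$.

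For the construction, I would take a finite $2$-complex $K$ with $\pi_1(K) \cong G$, which exists since $G$ is finitely presented. To realise the desired orientation character $w$, I would equip $K$ with a stable vector bundle whose first Stiefel--Whitney class corresponds to $w$ under the identification $H^1(K;\F_2) \cong \Hom(G, \{\pm 1\})$, and use this bundle data to produce an $n$-dimensional thickening $f_T \colon K \to T$ in the sense of \cref{s:thick}; this requires $n \ge 2k+2 = 6$, so the boundary case $n = 5$ may need a small separate argument. I then set $M \coloneqq T \cup_{\id_{\partial T}} T$ with the canonical polarisation $\varphi \colon K \to M$. By construction $\pi_1(M) \cong G$, the orientation character of $M$ is $w$, and $(M,\varphi)$ has a trivial double structure.

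The key step is then to apply \cref{theorem:SP-manifolds-and-tau-intro} with $(N,\psi) = (M,\varphi)$. Since $(M,\varphi)$ has a trivial double structure, \cref{cor:tau-triv-double} gives $\tau(M,\varphi) = 0$, so for any homotopy automorphism $f \colon M \to M$ with restriction $\alpha \colon K \to K$ the formula collapses to
\[
\tau(f) \;=\; \tau(\alpha) - (-1)^n \overline{\tau(\alpha)} \;\in\; \mathcal{I}_n(G,w).
\]
Therefore $T(M) \subseteq \mathcal{I}_n(G,w)$, so $U(M) = \pi(T(M)) \subseteq \pi(\mathcal{I}_n(G,w)) = 0$. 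Since $\psi$ is nontrivial by hypothesis, $\image(\psi) \setminus U(M) = \image(\psi) \setminus \{0\}$ is nonempty, and \cref{prop:TU-cond}\eqref{prop:TU-cond-b} delivers $|\M^h_{s,\hCob}(M)| > 1$, as required.

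The main obstacle I anticipate is verifying that $(M,\varphi)$ satisfies the hypotheses of \cref{theorem:SP-manifolds-and-tau-intro}, i.e.\ that it is split polarised in the sense of \cref{def:SP}. The excerpt signals that SP requires ``mild restrictions on dimensions or double structures'', and confirming these for trivial doubles of $2$-complex thickenings in dimensions $n \ge 6$ is the technical heart of the argument; plausibly it follows from the explicit trivial double structure on both sides together with the $\lfloor n/2 \rfloor$-connectivity of $\varphi$ guaranteed by \cref{prop:double-conn-intro}. Secondary issues are the realisation of a prescribed $w$ via a stable bundle on $K$ (standard) and the treatment of $n = 5$, which falls outside the thickening range $n \ge 2k+2$ for $k = 2$ and would need an auxiliary argument.
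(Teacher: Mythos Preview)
Your approach is essentially the same as the paper's: build a trivial double over a $2$-complex thickening realising $(G,w)$, use \cref{theorem:SP-manifolds-and-tau-intro} (in the paper via \cref{theorem:haut-simple}) to deduce $U(M)=\{0\}$, and finish with \cref{prop:TU-cond}\eqref{prop:TU-cond-b}. Both of your anticipated obstacles dissolve: condition (SP3) in \cref{def:SP} only requires $n \ge \max(k+3,2k+1)=5$ for $k=2$, so trivial doubles over $2$-complexes are SP manifolds for all $n \ge 5$; and the $n=5$ thickening with prescribed normal bundle is supplied by \cref{rem:thick-5}.
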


\begin{remark} (a)
For $n \geq 6$, this can also be deduced from results in Hausmann's unpublished preprint \cite[Sections 9--10]{hausmann-mwmdh}. 

(b)
For $G=C_{\infty} \times C_m$, $w=1$, and $n$ even, the map $\psi$ can be nontrivial by combining \cite[Proposition 3.12]{paper1} with \cite[Theorem 1.8 (ii)]{paper1}. More specifically, by \cite[Proposition 11.16]{paper1},  $\psi \ne 0$ for infinitely many groups of the form $C_\infty \times C_{2^k}$.
\end{remark}

When we also have some control over the possible values of $\tau(\alpha)$, \cref{theorem:SP-manifolds-and-tau-intro} gives even stricter restrictions on the Whitehead torsions of homotopy equivalences and automorphisms. For example, if $K$ and $L$ are $k$-manifolds with orientation character $w$, then $\tau(\alpha) \in \mathcal{J}_k(G,w)$ (see \cref{prop:WT-man}). We obtain the following simple homotopy rigidity theorem for sphere bundles.

\begin{thmx} \label{thmx:sb}
Suppose that $j > k$ are positive integers and $j$ is odd. Let $K$ and $L$ be $k$-manifolds, and let $S^j \to M \to K$ and $S^j \to N \to L$ be orientable $($linear$)$ sphere bundles. Then every homotopy equivalence $f \co M \to N$ is simple.
\end{thmx}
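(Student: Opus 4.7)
The plan is to realise both $(M,\varphi)$ and $(N,\psi)$ as trivial doubles coming from the vector bundle underlying each sphere bundle, apply the Whitehead torsion formula of \cref{theorem:SP-manifolds-and-tau-intro}, and conclude from the parity hypothesis on $j$. Writing $M$ as the unit sphere bundle $S(\xi)$ of an orientable rank-$(j+1)$ vector bundle $\xi$ over $K$, the primary obstruction to a nowhere-zero section of $\xi$ lies in $H^{j+1}(K;\Z)$, which vanishes because $K$ is $k$-dimensional and $j+1>k$. Hence $\xi \cong \eta \oplus \epsilon$ with $\eta$ of rank $j$ and $\epsilon$ trivial. The fiberwise suspension identification $S(\eta \oplus \epsilon) = D(\eta) \cup_{S(\eta)} D(\eta)$ exhibits $M$ as the trivial double of the $n$-dimensional thickening $T := D(\eta)$ of $K$, with $f_T \co K \to T$ the zero section; I take $\varphi \co K \to M$ to be the canonical polarisation, i.e.\ $f_T$ composed with the inclusion of one hemisphere. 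The same construction applied to $N$ gives $\psi \co L \to N$.

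A glance at the long exact sequence of $S^j \to M \to K$ shows that $p \co M \to K$ is $j$-connected, so $\varphi$ — being a section of $p$ — induces isomorphisms on $\pi_i$ for all $i \leq j-1$. A short check confirms that the hypothesis $j > k$ with $j$ odd gives $\lfloor n/2\rfloor \leq j-1$ throughout, so $\varphi$ is $\lfloor n/2\rfloor$-connected and $(M,\varphi)$ has a (trivial) generalised double structure by \cref{prop:double-conn-intro}; consequently \cref{cor:tau-triv-double} yields $\tau(M,\varphi) = 0 = \tau(N,\psi)$. The same connectivity bound, together with $f$ being a homotopy equivalence, forces the restriction $\alpha \co K \to L$ of $f$ to induce isomorphisms on $\pi_i$ for all $i \leq k$, hence (by Whitehead's theorem applied to the $k$-complexes $K$, $L$) to be a homotopy equivalence. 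With the error terms zero, \cref{theorem:SP-manifolds-and-tau-intro} reduces to
\[
\tau(f) = \tau(\alpha) - (-1)^n \overline{\tau(\alpha)} \in \Wh(\pi_1(N), w_N).
\]

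Orientability of the bundles identifies the orientation characters of $M$ and $K$ (and of $N$ and $L$) under the isomorphisms on fundamental groups, so writing $G$ for the common fundamental group and $w$ for the common orientation character, \cref{prop:WT-man} applied to the homotopy equivalence $\alpha$ of $k$-manifolds gives $\tau(\alpha) \in \mathcal{J}_k(G,w)$, i.e.\ $\overline{\tau(\alpha)} = -(-1)^k \tau(\alpha)$. Substituting,
\[
\tau(f) = \bigl(1 - (-1)^{n+k+1}\bigr)\tau(\alpha) = \bigl(1 - (-1)^{j+2k+1}\bigr)\tau(\alpha) = 0,
\]
because $j$ odd makes $j+2k+1$ even. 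The main subtlety I expect is verifying that the trivial double built from the vector-bundle splitting genuinely fits the split polarised framework of \cref{def:SP}, so that \cref{theorem:SP-manifolds-and-tau-intro} applies in the form used above; once this setup (together with the identification of orientation characters) is in place, the remainder is a formal algebraic manipulation driven entirely by the parity of $j$.
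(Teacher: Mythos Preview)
Your proposal is correct and follows essentially the same route as the paper: split off a trivial line bundle to exhibit $M$ and $N$ as trivial doubles of disc bundles, invoke the torsion formula with vanishing error terms, and use $\tau(\alpha)\in\mathcal{J}_k(G,w)$ together with the parity of $j=n-k$ to kill the remaining contribution. One small point worth tightening: your appeals to \cref{prop:double-conn-intro} and \cref{cor:tau-triv-double} presuppose $n\geq\max(6,2k+2)$, which fails for instance when $(k,j)=(2,3)$; the paper avoids this by noting that $(M,\varphi)$ satisfies condition (SP3) of \cref{def:SP} directly (requiring only $n\geq\max(k+3,2k+1)$, which always holds here), whereupon $\tau(M,\varphi)=0$ holds by convention in the low-dimensional range.
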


\begin{remark}
If we think of the Whitehead torsion as an invariant analogous to the Euler characteristic (cf.\ \cite[Lemma 2.18]{paper1}), then \cref{thmx:sb} can be regarded as the analogue of the fact that odd dimensional manifolds have vanishing Euler characteristic.
\end{remark}

Combined with \cref{prop:TU-cond} \eqref{prop:TU-cond-a}, \cref{thmx:sb} leads to a proof of \cref{thmx:doubles1} (see \cref{ss:doubles-man}).  At the same time, it produces a class of manifolds, with arbitrary fundamental groups, within which two manifolds are homotopy equivalent if and only if they are simple homotopy equivalent.  

To get more concrete examples, next we consider doubles over certain $2$-complexes $X$ with fundamental group $C_{\infty} \times C_m$, for which Metzler showed that $\tau \colon \hAut(X) \to \Wh(C_{\infty} \times C_m)$ is not surjective \cite[Theorem 1]{Me79}. Using these complexes $X$, improved constraints on the set $\tau(\hAut(X))$, and \cref{prop:TU-cond} \eqref{prop:TU-cond-a}, we obtain the following theorem. 

\begin{theorem} \label{thmx:metzler}
Let $n \geq 5$ and let $m \ge 2$ be such that $\{x-(-1)^n \ol{x} \mid x \in \wt K_0(\Z C_m)\} \ne 0$. Then there is an orientable $n$-manifold $M$ with fundamental group $C_{\infty} \times C_m$ such that $|\M^{\hCob}_s(M)| > 1$.
\end{theorem}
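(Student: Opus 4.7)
The plan is to apply \cref{prop:TU-cond}\eqref{prop:TU-cond-a} to a trivial double built from one of Metzler's $2$-complexes, using \cref{theorem:SP-manifolds-and-tau-intro} to pin down $T(M)$ and the Bass--Heller--Swan (BHS) decomposition of $\Wh(C_\infty \times C_m)$ to separate the resulting set from the hypothesised element of $\wt K_0(\Z C_m)$. The goal is to exhibit an element of $\mathcal{I}_n(G,w) \setminus T(M)$, where $G = C_\infty \times C_m$ and $w=1$.

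For $n \geq 6$, fix a $2$-complex $X$ as in \cite[Theorem 1]{Me79} with $\pi_1(X) \cong C_\infty \times C_m$, so that $\tau(\hAut(X))$ is contained in the BHS summand $\Wh(C_m) \leq \Wh(C_\infty \times C_m)$. Embed $X$ in $\R^n$, take an $n$-dimensional regular neighbourhood $T$ of $X$, and form the trivial double $M := T \cup_{\id_{\partial T}} T$ with canonical polarisation $\varphi \colon X \to M$. Then $M$ is an orientable $n$-manifold with $\pi_1(M) \cong C_\infty \times C_m$, the pair $(M,\varphi)$ is split polarised, and $\tau(M,\varphi) = 0$ by \cref{cor:tau-triv-double}. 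For any $f \in \hAut(M)$, \cref{theorem:SP-manifolds-and-tau-intro} then gives
\[
\tau(f) = \tau(\alpha) - (-1)^n \overline{\tau(\alpha)},
\]
where $\alpha \in \hAut(X)$ is the restriction of $f$, so that combining with Metzler's bound yields $T(M) \subseteq \{x - (-1)^n \ol{x} \mid x \in \Wh(C_m)\}$.

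The BHS decomposition $\Wh(C_\infty \times C_m) \cong \Wh(C_m) \oplus \wt K_0(\Z C_m) \oplus \Nil_0(\Z C_m)^{\oplus 2}$ is preserved, up to sign, on each of its first two summands by the canonical involution. Consequently, the projection $\pi_K \colon \Wh(C_\infty \times C_m) \twoheadrightarrow \wt K_0(\Z C_m)$ intertwines the involutions up to a sign and vanishes on $\Wh(C_m)$. By hypothesis choose $y \in \wt K_0(\Z C_m)$ with $z := y - (-1)^n \ol{y} \neq 0$; then $z \in \mathcal{I}_n(G,w)$ and $\pi_K(z) = z \neq 0$, whereas $\pi_K(x - (-1)^n \ol{x}) = 0$ for every $x \in \Wh(C_m)$, so $\pi_K$ vanishes on all of $T(M)$. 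Hence $z \in \mathcal{I}_n(G,w) \setminus T(M)$, and \cref{prop:TU-cond}\eqref{prop:TU-cond-a} gives $|\M^{\hCob}_s(M)| > 1$.

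The main obstacle will be to verify the precise behaviour of the canonical involution on the BHS decomposition---specifically, that $\wt K_0(\Z C_m)$ is preserved as a summand and that the restricted involution agrees, up to sign, with the standard duality involution on $\wt K_0(\Z C_m)$, so that the hypothesis on $\{x - (-1)^n \ol{x}\}$ can be read off from involution-equivariant projection---and to extract exactly the form of Metzler's theorem needed here, together with the ``improved constraints'' alluded to in the introduction. A secondary point is the boundary case $n=5$, for which the $2$-complex thickening framework formally fails since $2k+2 = 6 > n$; I expect this to be handled by a direct construction in dimension five, rather than by any substantive new idea.
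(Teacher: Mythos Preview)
Your approach is essentially the paper's: build a simple double over Metzler's $2$-complex $X=X_{\mathcal{P}}$, apply \cref{theorem:SP-manifolds-and-tau-intro} to reduce $T(M)$ to $\{\tau(\alpha)-(-1)^n\ol{\tau(\alpha)}:\alpha\in\hAut(X)\}$, and then separate from the hypothesised element by projecting to the $\wt K_0(\Z C_m)$ summand of the BHS decomposition (which respects the involution, cf.\ \cite[p.~329, p.~357]{Ra86}). Two corrections are worth making.

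First, your statement of Metzler's input is too strong. Metzler \cite[Lemma~2]{Me79} does \emph{not} show $\tau(\hAut(X))\subseteq\Wh(C_m)$; he only shows that the image of $\tau(\hAut(X))$ under $\Psi_*\colon\Wh(C_\infty\times C_m)\to K_1(\Z G/\Sigma)/(\Z G/\Sigma)^\times$ vanishes. To obtain the conclusion you actually use---that the $\wt K_0(\Z C_m)$-component of every $\tau(\alpha)$ is zero---one must further argue that $\psi_*\colon\wt K_0(\Z C_m)\to\wt K_0(\Z C_m/\Sigma)$ is injective. This is the paper's ``improved constraint'' (\cref{theorem:Metzler}): it runs the Mayer--Vietoris sequence of the Milnor square for $\Z C_m\to\Z C_m/\Sigma$ and $\Z C_m\to\Z$ over $\Z/m$, and then uses that the Swan subgroup $T(C_m)$ vanishes for cyclic groups \cite[Proposition~53.6~(iii)]{CR87} to kill the connecting map. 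The $NK_1$ components need not vanish, but your projection argument never needs them to; only the $\wt K_0$ projection matters, and that is exactly what \cref{theorem:Metzler} supplies.

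Second, the case $n=5$ is not a boundary exception. With $k=2$ one has $n\ge\max(k+3,2k+1)=5$, so the trivial double $M=T\cup_{\id_{\partial T}}T$ satisfies (SP3) of \cref{def:SP}, $\tau(M,\varphi)=0$ by convention, and \cref{theorem:heq-diag} applies verbatim. No separate construction is required.
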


\begin{remark}
If $n$ is even, then examples where $\{x- \ol{x} \mid x \in \wt K_0(\Z C_m)\} \ne 0$ are given in \cite[Theorem 1.7 (i)]{paper1}. If $n$ is odd then, by \cite[Lemma 10.2 (i)]{paper1} and a similar argument to the one used in \cite[Lemma 11.4]{paper1}, we have
$|\{x + \ol{x} \mid x \in \wt K_0(\Z C_m)\}| \ge h_m^{+}$ where $h_m^{+}$ denotes the plus part of the $m$th cyclotomic class number. For example, we have that $h_{136}^+ \ne 1$ \cite[p.~421]{Wa97}. 
\end{remark}

Finally we note that $\tau(M, \varphi)$ can be used to define an invariant for certain unpolarised manifolds. We say that $M$ is a \emph{split manifold} if there exists  a polarisation $\varphi \colon K \rightarrow M$ such that $(M, \varphi)$ is an SP manifold. For such an $M$ we define
\[
\tau(M) := \pi(\tau(M,\varphi)) \in \wh{H}^{n+1}(C_2;\Wh(\pi_1(M),w))
\]
where $w \colon \pi_1(M) \rightarrow \left\{ \pm 1 \right\}$ is the orientation character of $M$. It can be shown using \cref{theorem:SP-manifolds-and-tau-intro} that this is well-defined (see \cref{ss:depend}). If a split manifold $M$ is also a manifold without middle dimensional handles in the sense of Hausmann \cite{hausmann-mwmdh}, then $\tau(M)$ recovers the ``torsion invariant" defined in \cite[Section 9]{hausmann-mwmdh}, which was shown to be invariant under simple homotopy equivalences and homotopy equivalences induced by $h$-cobordisms. We prove the following (see \cref{theorem:tau-complete}).  

\begin{theorem} \label{thm:i-compl}
On the class of split manifolds, $\tau(M)$ is a complete invariant for the equivalence relation generated by simple homotopy equivalence and $h$-cobordism.
\end{theorem}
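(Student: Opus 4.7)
The plan is to split the argument into invariance under the two generating moves (simple homotopy equivalence and $h$-cobordism) and completeness, both of which will be reduced to the formula of \cref{theorem:SP-manifolds-and-tau-intro}, combined in the completeness direction with the realisation part of the $s$-cobordism theorem.

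For invariance under a simple homotopy equivalence $f \co M \to N$ of split manifolds, I would fix SP structures $(M,\varphi)$ and $(N,\psi)$, use the associated cellular decompositions to restrict $f$ to a homotopy equivalence $\alpha \co K \to L$, and substitute $\tau(f)=0$ into the formula of \cref{theorem:SP-manifolds-and-tau-intro}. The resulting identity
\[
f_*(\tau(M,\varphi))-\tau(N,\psi)=\tau(\alpha)-(-1)^n\overline{\tau(\alpha)}
\]
has right hand side in $\mathcal{I}_n(G,w)$, and hence becomes $f_*\tau(M)=\tau(N)$ after applying $\pi$. For invariance under an $h$-cobordism $W$ between $M$ and $N$, the induced homotopy equivalence $f$ satisfies $\tau(f)\in\mathcal{I}_n(G,w)$ by the standard duality between the inclusions $M,N\hookrightarrow W$; the same formula then yields the same conclusion modulo $\mathcal{I}_n$. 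Well-definedness of $\tau(M)$ across different SP structures, established in \cref{ss:depend}, ensures the argument is unambiguous.

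For completeness, suppose $M \simeq N$ via a homotopy equivalence $f$ with $f_*\tau(M)=\tau(N)$. Choosing SP structures and applying \cref{theorem:SP-manifolds-and-tau-intro}, both summands $\tau(\alpha)-(-1)^n\overline{\tau(\alpha)}$ and $\tau(N,\psi)-f_*\tau(M,\varphi)$ lie in $\mathcal{I}_n(G,w)$ (the first by construction, the second by hypothesis), so $\tau(f)\in\mathcal{I}_n(G,w)$. Writing $\tau(f)=x-(-1)^n\ol{x}$, I would use the realisation theorem to construct an $h$-cobordism $W$ from $M$ to some $M'$ with $\tau(M\hookrightarrow W)=x$; the induced homotopy equivalence $g\co M\to M'$ then has $\tau(g)=\tau(f)$, and the composition formula for Whitehead torsion shows that $f\circ g^{-1}\co M'\to N$ is simple. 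This exhibits $M$ as $h$-cobordant to $M'$ and $M'$ as simple homotopy equivalent to $N$, completing the equivalence.

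I expect the main obstacle to be the $h$-cobordism step of the invariance direction: the induced homotopy equivalence $f\co M\to N$ is not constructed as a map between doubles, and one must argue that \cref{theorem:SP-manifolds-and-tau-intro} still applies, which requires verifying that any homotopy equivalence between split manifolds can be cellularly approximated to fit the hypotheses of that theorem. A secondary bookkeeping issue is that the polarisations on $M$ and $N$ are unrelated, so the restricted map $\alpha$ depends on choices; here it is essential that the resulting element $\pi(f_*\tau(M,\varphi))-\pi(\tau(N,\psi))$ is independent of these choices, which again rests on the well-definedness established in \cref{ss:depend}.
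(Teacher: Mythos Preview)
Your proposal is correct and follows essentially the same route as the paper: the paper first isolates the identity $\pi(\tau(f))=\tau(N)-f_*(\tau(M))$ for any homotopy equivalence between split manifolds (\cref{theorem:tau-diff}), deduced exactly as you do from \cref{theorem:SP-manifolds-and-tau-intro}, and then uses it in both directions, citing \cite[Corollary~3.3]{paper1} for the realisation step you sketch by hand. Your anticipated obstacle is not one: \cref{theorem:SP-manifolds-and-tau-intro} applies to \emph{any} homotopy equivalence between SP manifolds via \cref{lem:heq-restr}, with no requirement that the map arise from a double structure, so the $h$-cobordism--induced equivalence needs no special treatment.
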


\subsection{Summary} \label{ss:group-classification}

Let $G$ be a finitely presented group with a homomorphism $w \colon G \to \{\pm 1\}$. We now give a brief overview of how the above results can be used to decide whether a pair of homotopy equivalent but not simple homotopy equivalent $n$-manifolds with fundamental group $G$ and orientation character $w$ exists -- equivalently, an whether an $M$ exists with 1-type $(G,w)$ and $|\mathcal{M}^h_s(M)| > 1$. We also consider the analogous questions for $\M^{\hCob}_s$ and $\M^h_{s,\hCob}$.

First, \cref{thmx:doubles1} characterises pairs $(G,w)$ such that $|\mathcal{M}^{\hCob}_s(M)| > 1$ for some $M$ with fundamental group $G$ and orientation character $w$ in terms of the (non-)vanishing of $\mathcal{I}_n(G,w)$. 
For $\mathcal{M}^h_{s,\hCob}$, \cref{prop:TU-cond} \eqref{prop:TU-cond-c} and \cref{thmx:doubles2} show that the nontriviality of $\wh H^{n+1}(C_2;\Wh(G,w))$ is necessary and the nonvanishing of $\psi = \psi^{n+1}_{(G,w)} \colon L_{n+1}^h(\Z G, w) \rightarrow \wh{H}^{n+1}(C_2;\Wh(G,w))$ is sufficient. The case when $\wh H^{n+1}(C_2;\Wh(G,w)) \neq 0$ but $\psi^{n+1}_{(G,w)} = 0$ is open in general.

Finally, if at least one of $\mathcal{M}^{\hCob}_s(M)$ and $\mathcal{M}^h_{s,\hCob}(M)$ is nontrivial, then $\mathcal{M}^h_s(M)$ is nontrivial as well. On the other hand, if $\mathcal{J}_n(G,w) = 0$ (equivalently, $\mathcal{I}_n(G,w) = \wh{H}^{n+1}(C_2;\Wh(G,w)) = 0$), then by \cref{prop:WT-man} $\mathcal{M}^h_s(M)$ is trivial (and hence $\mathcal{M}^{\hCob}_s(M)$ and $\mathcal{M}^h_{s,\hCob}(M)$ are trivial too). Thus the existence of an $M$ with $|\mathcal{M}^h_s(M)| > 1$ can be decided in all cases except when $\mathcal{I}_n(G,w) = 0$, $\wh H^{n+1}(C_2;\Wh(G,w)) \neq 0$ and $\psi^{n+1}_{(G,w)} = 0$. We note that this case is nonempty: for example, for $n$ even,  the group $C_4 \times C_4$, with $w=1$, falls into this category. Details will be postponed  for future work. 

These results are summarised in \cref{table:q2}, where we also indicate the restrictions on $n$ needed for our theorems to apply. (We are not asserting that the dimensional ranges given are optimal.)

\begin{table}[h] 
\begin{center}
\begin{tabular}{|c|c|c||c|c|c|c|c|c|}
\hline
\!$\mathcal{I}_n(G,w)$\! & \!\!$\wh H^{n+1}(C_2;\Wh(G,w))$\!\! & \!$\psi^{n+1}_{(G,w)}$\! & \multicolumn{2}{c|}{$\mathcal{M}^h_s$} & \multicolumn{2}{c|}{$\mathcal{M}^{\hCob}_s$} & \multicolumn{2}{c|}{$\mathcal{M}^h_{s,\hCob}$} \\
\hline
$= 0$ & $=0$ & $=0$ & No & all $n$ & No & all $n$ & No & all $n$ \\
\hline
$= 0$ & $\ne 0$ & $\ne 0$ & Yes & $n \geq 5$ & No & all $n$ & Yes & $n \geq 5$ \\
\hline
$= 0$ & $\ne 0$ & $=0$ & Open & - & No & all $n$ & Open & - \\
\hline
$\ne 0$ & $=0$ & $=0$ & Yes & $n = 9, \geq 11$ & Yes & $n = 9, \geq 11$ & No & $n \geq 5$ \\
\hline
$\ne 0$ & $\ne 0$ & $\ne 0$ & Yes & $n \geq 5$ & Yes & $n = 9, \geq 11$ & Yes & $n \geq 5$ \\
\hline
$\ne 0$ & $\ne 0$ & $=0$ & Yes & $n = 9, \geq 11$ & Yes & $n = 9, \geq 11$ & Open & - \\
\hline
\end{tabular}
\vspace{1mm}
\end{center}
\caption{Is there an $n$-manifold $M$ with fundamental group $G$ and orientation character $w$ such that $|\mathcal{M}^h_s(M)| > 1$ (resp.\ $|\mathcal{M}^{\hCob}_s(M)| > 1$, $|\mathcal{M}^h_{s,\hCob}(M)| > 1$)?}
\label{table:classifying-he-vs-she}
\vspace{-5mm}
\label{table:q2}
\end{table}

\begin{remark}
The results listed in \cref{table:q2} also apply in the category of topological or PL manifolds. That is because when there are examples of smooth manifolds that are, for instance, homotopy equivalent but not simple homotopy equivalent, then those are also examples in the other categories. The negative results in the first row and the $\mathcal{M}^{\hCob}_s$ column rely on \cref{prop:WT-man,prop:WT-hcob}, which only use Poincar\'e duality. Finally, when $\wh H^{n+1}(C_2;\Wh(G,w))=0$ but $\mathcal{I}_n(G,w) = \mathcal{J}_n(G,w) \neq 0$, the triviality of $\mathcal{M}^h_{s,\hCob}$ follows ultimately from \cite[Proposition 4.21]{paper1}, so it holds in all three categories when $n \geq 5$ (and also for topological $4$-manifolds with good fundamental group). 
\end{remark}

\subsection*{Organisation of the paper}

In \cref{s:prelims} we will briefly recall the basic preliminaries on Whitehead torsion, taken from \cite{paper1}.
In \cref{s:scc} we consider split chain complexes, and prove the algebraic version of \cref{theorem:SP-manifolds-and-tau-intro}. In \cref{s:thick}, we recall Wall's results on thickenings. In \cref{s:gd}, we introduce polarised doubles, SP manifolds and the invariant $\tau(M,\varphi)$, and then identify $\tau(M,\varphi)$ with an invariant of the cellular chain complex of $M$. In \cref{s:doubles-main}, we combine the previous results to prove \cref{theorem:SP-manifolds-and-tau-intro}. In \cref{s:appl}, we consider various applications of \cref{theorem:SP-manifolds-and-tau-intro}, including Theorems \ref{thmx:doubles1}, \ref{thmx:sb}, \ref{thmx:doubles2}, \ref{thmx:metzler} and \ref{thm:i-compl}. 

\subsection*{Acknowledgements}

CsN was supported by EPSRC New Investigator grant EP/T028335/2.
JN was supported by the Heilbronn Institute for Mathematical Research and a Rankin-Sneddon Research Fellowship from the University of Glasgow.
MP was partially supported by EPSRC New Investigator grant EP/T028335/2 and EPSRC New Horizons grant EP/V04821X/2.

\section{Whitehead torsion} \label{s:prelims}

We will use the conventions established in \cite[Section 2]{paper1}. The main sources are Milnor~\cite{Mi66}, Cohen~\cite{Co73}, and Davis-Kirk~\cite{DK01}. We will assume familiarity with the definition of simple homotopy equivalence $\simeq_s$, the Whitehead group $\Wh(G)$ of a group $G$, and the Whitehead torsion $\tau(f) \in \Wh(G)$ of a chain homotopy equivalence $f : C_* \to D_*$ between chain complexes of finitely generated, free, based (left) $\Z G$-modules (see \cite[Section 2]{paper1}). For the reader's convenience, we briefly recall the main properties that we will need.

\begin{proposition}[{\cite[Theorem 11.27]{DK01}}] \label{prop:chain-hom-chain-equivs-same-torsion}
Let $f,g \colon C_* \to D_*$ be homotopic chain homotopy equivalences. Then $\tau(f) = \tau(g)$.
\end{proposition}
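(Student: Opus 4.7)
My plan is to proceed via the algebraic mapping cone, which is the standard vehicle for reducing questions about torsions of chain equivalences to questions about torsions of acyclic based complexes. Recall that for a chain homotopy equivalence $f \colon C_* \to D_*$ of based, finitely generated free $\Z G$-chain complexes, the Whitehead torsion $\tau(f) \in \Wh(G)$ is defined as $\tau(\mathrm{Cone}(f))$, where $\mathrm{Cone}(f)_n = D_n \oplus C_{n-1}$ is equipped with the direct sum basis and the differential has block form $\begin{pmatrix} d_D & \pm f \\ 0 & -d_C \end{pmatrix}$. Since $f$ is a chain equivalence, $\mathrm{Cone}(f)$ is acyclic, so this torsion is well-defined.

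Given homotopic chain equivalences $f, g \colon C_* \to D_*$, I would fix a chain homotopy $H \colon C_* \to D_{*+1}$ with $g - f = d_D H + H d_C$ and then define a candidate map $\Phi \colon \mathrm{Cone}(f) \to \mathrm{Cone}(g)$ by the block formula $\Phi_n = \begin{pmatrix} \id_{D_n} & \pm H \\ 0 & \id_{C_{n-1}} \end{pmatrix}$, choosing the sign so that the chain-map identity $\Phi_{n-1}\circ \partial_{\mathrm{Cone}(f)} = \partial_{\mathrm{Cone}(g)} \circ \Phi_n$ is a direct consequence of the chain homotopy identity for $H$. Because $\Phi$ is upper triangular with identity blocks on the diagonal, it is automatically a chain isomorphism of based chain complexes.

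To conclude, I would observe that $\Phi$ has vanishing Whitehead torsion: each $\Phi_n$ is upper triangular unipotent in the sum basis, hence an elementary matrix whose class dies in $\Wh(G)$, so the alternating sum defining $\tau(\Phi)$ vanishes. Finally I would invoke the standard multiplicativity formula $\tau(\mathrm{Cone}(g)) = \tau(\mathrm{Cone}(f)) + \tau(\Phi)$ for a chain isomorphism between acyclic based chain complexes to obtain $\tau(g) = \tau(f)$.

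The only real difficulty is bookkeeping: pinning down the sign conventions in the cone differential and in the definition of $\Phi$ so that the chain-map identity falls out cleanly from $g-f = d_D H + H d_C$. Once a convention is fixed the verification is mechanical, and no input beyond the definition of the chain homotopy is needed; in particular nothing is required of $f$ or $g$ beyond the fact that their cones are acyclic.
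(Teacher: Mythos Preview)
Your argument is correct and is the standard mapping cone proof; the only subtlety you flag (matching the sign in $\Phi$ to the sign convention in the cone differential) is genuine but, as you note, purely mechanical. The paper itself does not supply a proof of this proposition at all: it is stated as a citation to \cite[Theorem 11.27]{DK01}, so there is no in-paper argument to compare against, and your proof is essentially the one found in that reference.
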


\begin{lemma}[{\cite[Theorem 11.28]{DK01}}] \label{lem:WT-ch-comp}
Let $f \colon C_* \rightarrow D_*$ and $g \colon D_* \rightarrow E_*$ be chain homotopy equivalences. Then $\tau(g \circ f) = \tau(f) + \tau(g)$. In particular $\tau(\Id)=0$. 
\end{lemma}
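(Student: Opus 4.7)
The plan is to follow the standard proof of multiplicativity (additivity in the $K_1$-sense) of Whitehead torsion, as carried out in \cite{Mi66,Co73,DK01}.

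First, I would recall an equivalent characterisation of $\tau$: for a chain homotopy equivalence $h \colon X_* \to Y_*$ of based finitely generated free $\Z G$-complexes, the algebraic mapping cone $\Cyl(h)/X_*$ (or equivalently $\text{Cone}(h)$) is an acyclic based $\Z G$-complex, and $\tau(h)$ agrees (up to the usual sign convention) with the torsion of this acyclic based complex, defined by choosing a chain contraction and taking the class of the resulting automorphism in $\Wh(G)$.

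The key step is to exhibit the composition formula via additivity of torsion for short exact sequences of based acyclic complexes. Given chain equivalences $f \colon C_* \to D_*$ and $g \colon D_* \to E_*$, I would pass to the algebraic mapping cylinder $\Cyl(f)$, which contains $C_*$ as the ``top'' subcomplex via an inclusion $i_C$ and admits a retraction $r \colon \Cyl(f) \to D_*$ that is a \emph{simple} chain equivalence (by construction, since its mapping cone is elementary). Composing, the map $\tilde g := g \circ r \colon \Cyl(f) \to E_*$ is a chain equivalence with $\tau(\tilde g)=\tau(g)$ (by \cref{prop:chain-hom-chain-equivs-same-torsion} applied up to the identification via $r$), and $\tilde g \circ i_C = g \circ f$. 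One then has a short exact sequence of based chain complexes
\[
0 \to \text{Cone}(i_C) \to \text{Cone}(\tilde g \circ i_C) \to \text{Cone}(\tilde g) \to 0
\]
with compatible bases; the first term is based-isomorphic to $\text{Cone}(f)$ up to a simple identification (so its torsion is $\tau(f)$), the second is $\text{Cone}(g \circ f)$, and the third is $\text{Cone}(\tilde g)$ contributing $\tau(g)$. Applying the standard additivity $\tau(B_*) = \tau(A_*) + \tau(C_*)$ for short exact sequences of based acyclic complexes yields $\tau(g \circ f) = \tau(f) + \tau(g)$.

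The identity case $\tau(\Id_{C_*}) = 0$ is immediate: $\text{Cone}(\Id_{C_*})$ is the standard elementary acyclic complex on $C_*$ (a sum of pieces $C_n \xrightarrow{\Id} C_n$ in consecutive degrees), whose torsion vanishes in $\Wh(G)$ by the very definition of the Whitehead group as a quotient by elementary matrices. The main obstacle is bookkeeping: one must track bases carefully through the mapping cylinder decomposition, verify the compatibility of the short exact sequence of cones at the level of based modules, and pin down sign conventions, but no genuinely new idea is required beyond what is already in \cite{DK01,Co73}.
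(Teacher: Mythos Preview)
The paper does not give its own proof of this lemma; it is stated as a preliminary fact with a citation to \cite[Theorem 11.28]{DK01}. Your sketch is essentially the standard argument from the cited references (mapping cones, additivity of torsion in based short exact sequences of acyclic complexes), so it is correct and consistent with the intended source.
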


\begin{lemma}[{\cite[Lemma~2.17]{paper1}}] \label{lem:WT-ch-add}
Let $0 \rightarrow C'_* \rightarrow C_* \rightarrow C''_* \rightarrow 0$ and $0 \rightarrow D'_* \rightarrow D_* \rightarrow D''_* \rightarrow 0$ be based short exact sequences of chain complexes, and let $(f',f,f'')$ be a morphism between them, where $f' \colon C'_* \rightarrow D'_*$, $f \colon C_* \rightarrow D_*$, and $f'' \colon C''_* \rightarrow D''_*$ are chain homotopy equivalences. Then $\tau(f) = \tau(f') + \tau(f'')$. 
\end{lemma}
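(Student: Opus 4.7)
The plan is to reduce the lemma to the classical additivity of Whitehead torsion under short exact sequences of acyclic based chain complexes, applied to the mapping cones of $f'$, $f$ and $f''$.

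First I would recall that for a chain homotopy equivalence $h \colon X_* \to Y_*$ between finitely generated free based $\Z G$-chain complexes, the torsion $\tau(h) \in \Wh(G)$ can be computed as the torsion $\tau(\Cone(h))$ of the algebraic mapping cone, which is an acyclic based complex with $\Cone(h)_n = X_{n-1} \oplus Y_n$ equipped with the concatenated basis. Since $f'$, $f$ and $f''$ are all chain homotopy equivalences, the three mapping cones are acyclic.

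Next, from the commutative diagram with exact rows we get an induced sequence of mapping cones
\[
0 \longrightarrow \Cone(f')_* \longrightarrow \Cone(f)_* \longrightarrow \Cone(f'')_* \longrightarrow 0,
\]
which is short exact in every degree. Because the basis of $C_n$ is the concatenation of the bases of $C'_n$ and (lifts of) $C''_n$, and similarly for $D_n$, the basis of $\Cone(f)_n$ agrees up to permutation with the concatenation of the bases of $\Cone(f')_n$ and $\Cone(f'')_n$; permutations of basis elements are trivial in $\Wh(G)$, so this is a based short exact sequence.

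At this point I would invoke the key algebraic fact: if $0 \to A_* \to B_* \to C_* \to 0$ is a based short exact sequence of acyclic, finitely generated, free, based $\Z G$-chain complexes, then $\tau(B_*) = \tau(A_*) + \tau(C_*)$ in $\Wh(G)$. The standard proof of this fact chooses a splitting $B_n \cong A_n \oplus C_n$ at each level (not as chain complexes), writes the differential of $B$ as a block upper triangular matrix with $\partial_A$ and $\partial_C$ on the diagonal, and assembles a chain contraction of $B_*$ from given chain contractions of $A_*$ and $C_*$. The resulting block upper triangular form for $(\partial+s)_{\mathrm{odd}} \colon B_{\mathrm{odd}} \to B_{\mathrm{ev}}$ shows that its class in $K_1(\Z G)$, modulo the basis change from the chosen splitting (which is trivial in $\Wh(G)$), is the sum of the corresponding classes for $A$ and $C$. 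Applying this with $A_* = \Cone(f')$, $B_* = \Cone(f)$, $C_* = \Cone(f'')$ yields
\[
\tau(f) = \tau(\Cone(f)) = \tau(\Cone(f')) + \tau(\Cone(f'')) = \tau(f') + \tau(f''),
\]
as required.

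The main obstacle is the additivity for short exact sequences of acyclic complexes; once that is in hand, the lemma follows by a formal manipulation with mapping cones. Since this additivity is entirely classical (see, e.g., Milnor~\cite{Mi66} or Cohen~\cite{Co73}), and the base-compatibility check above is straightforward, the only real content is ensuring that the concatenated bases on the cones actually agree, up to a permutation, with the bases coming from the SES of mapping cones — a simple degree-by-degree verification.
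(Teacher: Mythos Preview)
Your argument is correct and is the standard proof of this additivity lemma via mapping cones and the classical torsion additivity for short exact sequences of acyclic based complexes. Note that the paper does not actually prove this statement: it is quoted verbatim from \cite[Lemma~2.17]{paper1} as a preliminary fact, so there is no proof in the paper to compare against. Your approach matches the usual treatment in the cited sources (Milnor~\cite{Mi66}, Cohen~\cite{Co73}).
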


\begin{lemma}[{\cite[Lemma~2.19]{paper1}}] \label{lem:WT-ch-shift}
Let $f \colon C_* \rightarrow D_*$ be a chain homotopy equivalence. For every $k \in \Z$, it can also be regarded as a chain homotopy equivalence $f \colon C_{k+*} \rightarrow D_{k+*}$, and we have $\tau(f \colon C_{k+*} \rightarrow D_{k+*}) = (-1)^k \tau(f \colon C_* \rightarrow D_*)$.
\end{lemma}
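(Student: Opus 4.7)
The plan is to reduce the statement to an assertion about torsions of based contractible chain complexes, and then to carry out that assertion by induction on $|k|$. Recall that the Whitehead torsion of a chain homotopy equivalence $f \co C_* \to D_*$ is defined to equal the torsion $\tau(\Cyl(f))$ of the algebraic mapping cone, which is a contractible based complex of finitely generated free $\Z G$-modules. Since the algebraic mapping cone commutes with shift up to sign convention, i.e.\ $\Cyl(f \co C_{k+*} \to D_{k+*})$ is isomorphic as a based complex to $\Cyl(f)_{k+*}$, it is enough to prove the following algebraic claim: if $E_*$ is any based contractible chain complex of finitely generated free $\Z G$-modules, then $\tau(E_{k+*}) = (-1)^k \tau(E_*)$ in $\Wh(G)$.

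For the claim, I would first recall that the torsion of such a contractible complex is defined via a chain contraction $s$ by taking the class of the simple isomorphism $d+s \co E_{\odd} \to E_{\even}$ in $K_1(\Z G)$, then projecting to $\Wh(G)$; this class is independent of the choice of $s$. Next, by induction on $|k|$, it suffices to treat the case $k= \pm 1$. Consider $k=1$, so that $(E_{1+*})_n = E_{n+1}$. A chain contraction for $E_{1+*}$ is still given by $s$ (reindexed), but now the parities swap: $(E_{1+*})_{\odd} = E_{\even}$ and $(E_{1+*})_{\even} = E_{\odd}$. The assembled isomorphism for $E_{1+*}$ is then the inverse of the one for $E_*$, and inversion in $K_1(\Z G)$ introduces a sign; hence $\tau(E_{1+*}) = -\tau(E_*)$. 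The case $k=-1$ is symmetric, and iterating gives $\tau(E_{k+*}) = (-1)^k \tau(E_*)$.

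The main obstacle, and the only thing requiring real care, is to verify the sign-and-reindexing bookkeeping at two points: first that the mapping cone of the shifted map agrees as a based complex with the shifted mapping cone (so that the reduction is legitimate), and second that the odd-to-even simple isomorphism attached to $E_{1+*}$ really is the literal inverse of the one attached to $E_*$ with respect to the chosen ordered bases. Once these sign conventions are pinned down, the inductive step is immediate, and the conclusion follows. Since the statement is taken verbatim from \cite[Lemma 2.19]{paper1}, the argument can also be referenced rather than repeated; the above is the route I would take if writing it from scratch.
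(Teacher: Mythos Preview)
The paper does not give its own proof of this lemma; it simply cites \cite[Lemma~2.19]{paper1}. So there is nothing to compare against in this paper, and your final remark that one can reference rather than repeat the argument is exactly what the authors do.

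As for the argument you sketch, it is the standard one and is essentially correct. Two small points of care. First, a terminological slip: you write $\Cyl(f)$ for what you call the algebraic mapping cone; the mapping cone (not cylinder) is what is contractible and whose torsion defines $\tau(f)$. Second, the step ``the assembled isomorphism for $E_{1+*}$ is the literal inverse of the one for $E_*$'' is not quite true on the nose unless one chooses a contraction with $s^2=0$: in general $(d+s)^2 = \id + s^2$. This is harmless because $s^2$ is nilpotent (so $\id+s^2$ is a unipotent upper-triangular matrix with trivial class in $K_1$), or because one can always replace $s$ by a contraction with $s^2=0$; either way the class of $(d+s)\colon E_{\mathrm{even}}\to E_{\mathrm{odd}}$ is the negative of that of $(d+s)\colon E_{\mathrm{odd}}\to E_{\mathrm{even}}$. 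With those caveats your inductive argument goes through.
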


We can also define the Whitehead torsion for a map between cochain complexes. 

\begin{definition} \label{def:WT-ch-cochain}
Let $f \colon C^* \rightarrow D^*$ be a homotopy equivalence of cochain complexes of finitely generated, free, based, left $\Z G$-modules. It can be regarded as a homotopy equivalence of chain complexes $f \colon C^{-*} \rightarrow D^{-*}$, and we define $\tau(f \colon C^* \rightarrow D^*)  \coloneqq  \tau(f \colon C^{-*} \rightarrow D^{-*})$.
\end{definition}

If a group $G$ is equipped with a group homomorphism $w \colon G \to \{\pm 1\}$, then $w$ determines an involution on the group ring $\Z G$. So if  $C_*$ is a finitely generated, free, based, left $\Z G$-module chain complex, then we can define the dual cochain complex $C^*$, which also consists of finitely generated, free, based, left $\Z G$-modules. Moreover, the involution on the group ring induces an involution $x \mapsto \overline{x}$ on the Whitehead group $\Wh(G,w)$, and we have the following.

\begin{lemma}[{\cite[Lemma~2.24]{paper1}}] \label{lem:WT-ch-dual}
Let $f \colon C_* \rightarrow D_*$ be a chain homotopy equivalence and let $f^* \colon D^* \rightarrow C^*$ be its dual. Then $\tau(f^*) = \ol{\tau(f)} \in \Wh(G,w)$. 
\end{lemma}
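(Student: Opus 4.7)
The plan is to reduce the lemma to the following basic observation about $K_1(\Z G)$: for any invertible matrix $A \in GL_r(\Z G)$, the conjugate transpose $A^* = \ol{A}^T$ (with respect to the $w$-twisted involution) satisfies $[A^*] = \ol{[A]}$ in $K_1(\Z G)$. This is because $K_1$ is the abelianisation of $GL$, so transposition acts as the identity on $K_1$, giving $[A^*] = [\ol{A}^T] = [\ol{A}] = \ol{[A]}$.

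First, I would translate the statement into the language of the algebraic mapping cone. Recall $\tau(f) = \tau(\CC(f))$, where $\CC(f)$ is the acyclic based chain complex with $\CC(f)_n = D_n \oplus C_{n-1}$ and differential $\bsm \partial_D & f \\ 0 & -\partial_C \esm$. Dualising term-by-term gives a cochain complex whose underlying based modules in degree $n$ are $D^n \oplus C^{n-1}$, and one checks that up to a degree shift by one and a choice of sign in the off-diagonal entry this is precisely the mapping cone $\CC(f^*)$ of the dual map. By \cref{def:WT-ch-cochain} and \cref{lem:WT-ch-shift}, this identification converts $\tau(\CC(f)^*)$ into $\tau(f^*)$ up to a controlled sign, so the lemma reduces to comparing $\tau(\CC(f)^*)$ with $\ol{\tau(\CC(f))}$.

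Second, for an acyclic based chain complex $E_*$ of finitely generated free $\Z G$-modules, I would use the standard recipe for computing $\tau(E_*)$: choose a chain contraction $s \colon E_* \to E_{*+1}$, form the isomorphism $\partial + s \colon E_{\rm odd} \to E_{\rm even}$, and take its class in $\Wh(G)$. Dualising turns $\partial$ and $s$ into $\partial^*$ and $s^*$, whose matrices are the conjugate transposes of those of $\partial$ and $s$. The analogous isomorphism $(\partial + s)^*$ computing $\tau(E^*)$ is therefore the conjugate transpose of $\partial + s$, and the opening $K_1$-observation immediately yields $\tau(E^*) = \ol{\tau(E_*)}$.

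The main obstacle is sign bookkeeping. Three sources of signs enter: the $-\partial_C$ in the mapping cone differential, the $(-1)^k$ from \cref{lem:WT-ch-shift} when reindexing the cochain complex as a chain complex via $C^{-*}$, and the sign convention built into \cref{def:WT-ch-cochain}. Conceptually the lemma is immediate from how $K_1$ behaves under the involution, but one must verify carefully that these three signs cancel so that the identity $\tau(f^*) = \ol{\tau(f)}$ comes out clean, without an extraneous factor of $(-1)^n$ or similar. This verification is delicate but essentially mechanical once the conventions are pinned down.
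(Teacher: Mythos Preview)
The paper does not prove this lemma; it merely cites it as \cite[Lemma~2.24]{paper1}, so there is no proof in the present paper to compare against. Your proposal is the standard argument and is correct in outline: the reduction to the mapping cone, the computation of torsion of an acyclic complex via $\partial + s$ for a chain contraction $s$, and the observation that transposition acts trivially on $K_1$ (so that conjugate-transpose induces the involution) together give the result. Your assessment that the only real content is the sign bookkeeping, and that this is mechanical once conventions are fixed, is accurate. One small point worth making explicit in a written-out version: you should verify that $s^*$ is indeed a chain contraction of the dual complex (this is immediate from dualising the identity $\partial s + s\partial = \id$), and be careful that the direction of the map $\partial + s$ (odd to even versus even to odd) interacts correctly with the reindexing $C^{-*}$ and the shift from \cref{lem:WT-ch-shift}; but as you say, this is routine.
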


The following notation will be used when we need to change the underlying (group) ring of a module or chain complex.

\begin{definition} \label{def:twist-module}
Let $A$ and $B$ be groups, $X$ a left $\Z B$-module and $\theta \in \Hom(A,B)$. The left $\Z A$-module $X_{\theta}$ is defined as follows. The underlying abelian group of $X_{\theta}$ is the same as that of $X$. For every $a \in A$ and $x \in X_{\theta}$ let $ax = \theta(a) \cdot x$, where $\cdot$ denotes multiplication in $X$.

Similarly, if $Y$ is a right $\Z B$-module, then the right $\Z A$-module $Y^{\theta}$ is equal to $Y$ as an abelian group, and $ya = y \cdot \theta(a)$ for every $a \in A$ and $y \in Y^{\theta}$, where $\cdot$ denotes multiplication in $Y$.
\end{definition}

Let $X$ and $Y$ be finite CW complexes, and let $F := \pi_1(X)$ and $G := \pi_1(Y)$. The cellular chain complex of $Y$ with $\Z G$ coefficients is $C_*(Y; \Z G)$, which is a finitely generated, free, left $\Z G$-module chain complex, and similarly $C_*(X; \Z F)$ is a $\Z F$-module chain complex. Let $f \colon X \to Y$ be a homotopy equivalence, and let $\theta = \pi_1(f) \colon F \rightarrow G$. The right $\Z G$-module $\Z G$ corresponds to a local coefficient system on $Y$, which is pulled back to the local coefficient system on $X$ corresponding to the right $\Z F$-module $\Z G^{\theta}$. Therefore (after cellular approximation) $f$ induces a chain homotopy equivalence $f_* \colon C_*(X; \Z G^{\theta}) \rightarrow C_*(Y; \Z G)$ of left $\Z G$-module chain complexes, and the Whitehead torsion of $f$ is defined to be the Whitehead torsion of $f_*$. We will use the following properties.

\begin{theorem}[Chapman~\cite{Ch74}]\label{thm:chapman}
 Let $f \colon X \to Y$ be a homeomorphism between compact, connected CW complexes. Then $f$ is a simple homotopy equivalence. 
\end{theorem}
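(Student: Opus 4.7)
The plan is to reduce the question to the theory of Hilbert cube manifolds, which is the approach due to Chapman himself. Let $Q = \prod_{n=1}^{\infty}[0,1]$ denote the Hilbert cube and recall that a $Q$-manifold is a separable metric space locally homeomorphic to open subsets of $Q$. The central idea is that passing from a compact CW complex $X$ to the $Q$-manifold $X \times Q$ converts the combinatorial notion of simple homotopy equivalence into the topological notion of homeomorphism, so that torsion-theoretic information can be recovered purely from topology.

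First I would fix a homeomorphism $f \co X \to Y$ between compact connected CW complexes, and consider the induced homeomorphism $f \times \id_Q \co X \times Q \to Y \times Q$ between $Q$-manifolds. The next step is to invoke West's theorem, which says that every compact $Q$-manifold is homeomorphic to $K \times Q$ for some finite polyhedron $K$, so $X \times Q$ and $Y \times Q$ are bona fide $Q$-manifolds with well-defined Whitehead torsion theory. I would then apply the classification theorem for compact $Q$-manifolds: $K \times Q$ and $L \times Q$ are homeomorphic if and only if $K \simeq_s L$, and furthermore any homeomorphism $h \co K \times Q \to L \times Q$ can be canonically assigned a Whitehead torsion in $\Wh(\pi_1(L))$ which vanishes. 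Combined with the fact that the projection $X \times Q \to X$ is a simple homotopy equivalence (since it can be realized as an infinite sequence of collapses of cells of the form $\text{cell} \times [0,1]$), this gives $\tau(f) = \tau(f \times \id_Q) = 0$.

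The key technical step is establishing the triangulation theorem for $Q$-manifolds and the classification of $Q$-manifolds up to homeomorphism. I would prove triangulation via handle-straightening techniques: build up a $Q$-manifold from handles of the form $D^k \times Q$, then show by an engulfing argument that these handles can be straightened to have a polyhedral core. For the classification, the plan is to show that every homeomorphism $h \co K \times Q \to L \times Q$ can be $\epsilon$-approximated by a map factoring as a PL homeomorphism of some stabilization crossed with a collapse, using Chapman's approximation theorem for maps into $Q$-manifolds; since PL homeomorphisms and collapses are simple, one deduces triviality of the associated torsion.

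The main obstacle will be this infinite-dimensional machinery: both the approximation of homeomorphisms by PL maps and the handle-straightening arguments are delicate, relying on the absorbing and homogeneity properties of the Hilbert cube (notably $Q \cong Q \times Q$ and the fact that $Z$-sets in $Q$ can be moved by small ambient isotopies). A secondary difficulty is ensuring that the Whitehead torsion invariants for $Q$-manifolds agree under stabilization with those defined combinatorially for CW complexes, so that the vanishing statement in the $Q$-manifold category genuinely implies $\tau(f) = 0 \in \Wh(\pi_1(Y))$. Once these foundational results are in place, the deduction of the theorem itself is essentially a one-line application of the classification theorem.
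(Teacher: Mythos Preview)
The paper does not prove this theorem; it is simply quoted as a background result from the literature with a citation to Chapman~\cite{Ch74}. Your proposal correctly outlines Chapman's original argument via Hilbert cube manifolds, and the sketch is accurate in its broad strokes: stabilise by crossing with $Q$, invoke West's theorem that compact $Q$-manifolds have the form $K \times Q$ for finite polyhedra $K$, and use the classification of compact $Q$-manifolds up to homeomorphism to conclude that the torsion vanishes.

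One small correction: you attribute the fact that the projection $X \times Q \to X$ is a simple homotopy equivalence to ``an infinite sequence of collapses,'' but $X \times Q$ is not a finite CW complex, so simple homotopy type in the combinatorial sense is not directly defined for it. The actual argument works the other way around: one shows that for finite polyhedra $K$ and $L$, the existence of a homeomorphism $K \times Q \to L \times Q$ forces $K \simeq_s L$, and this is the content of Chapman's classification theorem rather than a consequence of a naive collapse argument. The compatibility of torsion under stabilisation that you flag as a ``secondary difficulty'' is in fact the heart of the matter. That said, since the paper treats this as a black box, your level of detail already exceeds what is required here.
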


\begin{proposition}[{\cite[Statement~22.4]{Co73}}] \label{prop:WT-composition}
Let $X$, $Y$, and $Z$ be finite CW complexes and let $f \colon X \to Y$ and $g \colon Y \to Z$ be homotopy equivalences. Then $\tau(g \circ f) = \tau(g) + g_*(\tau(f))$.
\end{proposition}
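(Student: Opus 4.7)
The plan is to reduce the topological composition formula to the algebraic composition formula \cref{lem:WT-ch-comp} after carefully tracking the change of coefficient rings. Write $F = \pi_1(X)$, $G = \pi_1(Y)$, $H = \pi_1(Z)$, and let $\theta_f = \pi_1(f) \colon F \to G$ and $\theta_g = \pi_1(g) \colon G \to H$, so $\pi_1(g\circ f) = \theta_g \circ \theta_f$. By cellular approximation and \cref{prop:chain-hom-chain-equivs-same-torsion}, I may assume $f$ and $g$ are cellular without altering any of the three torsions.

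The awkwardness is that $\tau(f)$, $\tau(g)$ and $\tau(g\circ f)$ live in different Whitehead groups and come from chain maps over different group rings. The key step is to base change $f_* \colon C_*(X;\Z G^{\theta_f}) \to C_*(Y;\Z G)$ by tensoring with $\Z H$, regarded as a right $\Z G$-module via $\theta_g$. Using the canonical identifications $C_*(X;\Z G^{\theta_f}) \otimes_{\Z G} \Z H \cong C_*(X;\Z H^{\theta_g \circ \theta_f})$ and $C_*(Y;\Z G) \otimes_{\Z G} \Z H \cong C_*(Y;\Z H^{\theta_g})$, which respect the preferred cellular bases, the tensored map becomes a chain equivalence $\wt f_*$ of based $\Z H$-module chain complexes. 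On cellular chains this produces the factorisation $(g \circ f)_* = g_* \circ \wt f_*$, where now $g_*$ denotes the chain map induced by the cellular map $g$ with $\Z H$ coefficients. Applying \cref{lem:WT-ch-comp} to this factorisation yields $\tau((g\circ f)_*) = \tau(\wt f_*) + \tau(g_*)$, so it remains to identify $\tau(\wt f_*)$ with the image of $\tau(f) = \tau(f_*)$ under the map on Whitehead groups induced by $\theta_g$, which in the topological notation is the map denoted $g_*$.

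The final identification $\tau(\wt f_*) = g_*(\tau(f_*))$ is the main obstacle and forms the algebraic heart of the argument: it is the change-of-rings formula for chain-level Whitehead torsion. Concretely, $\tau(f_*)$ is represented by an invertible matrix over $\Z G$ built from the cellular boundary maps and $f_*$ in the preferred bases, and tensoring with $\Z H$ over $\theta_g$ applies $\theta_g$ entrywise to this matrix, which is precisely how the induced map $\Wh(G) \to \Wh(H)$ is defined. The remaining work is purely bookkeeping: verifying that the identifications of tensor products with twisted-coefficient complexes really send the tensored bases to cellular bases, and confirming that the unit/sign ambiguities that are invisible in $\Wh$ are correctly absorbed. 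Substituting the identification back gives $\tau(g\circ f) = g_*(\tau(f)) + \tau(g)$, as claimed.
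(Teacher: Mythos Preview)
The paper does not give its own proof of this proposition; it is simply recalled from Cohen's book \cite[Statement~22.4]{Co73} as part of the preliminaries in \cref{s:prelims}. So there is no paper proof to compare against.

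That said, your argument is the standard one and is correct in outline. The reduction to the algebraic composition formula \cref{lem:WT-ch-comp} via the factorisation $(g\circ f)_* = g_* \circ \widetilde f_*$ of $\Z H$-chain maps is exactly right, and the identification $\tau(\widetilde f_*) = (\theta_g)_*(\tau(f_*))$ is the change-of-rings formula for Whitehead torsion. In the framework of this paper and its companion, the base-change identifications you invoke are precisely \cite[Lemma~2.26]{paper1}, and the change-of-rings formula is \cite[Lemma~2.27]{paper1}; you could cite these rather than leaving them as ``bookkeeping''. One small point of care: the identification $C_*(X;\Z G^{\theta_f}) \otimes_{\Z G} \Z H \cong C_*(X;\Z H^{\theta_g\theta_f})$ requires $\Z H$ to be viewed as a $(\Z G,\Z H)$-bimodule (left action via $\theta_g$, right action by multiplication), and one should check the tensor product is on the correct side so that the resulting complex is a left $\Z H$-module; your description is slightly ambiguous on this but the intended statement is clear.
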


The involution of the Whitehead group and the subgroups $\mathcal{J}_n(G,w)$ and $\mathcal{I}_n(G,w)$ play an important role when considering homotopy equivalences between manifolds, due to the following two results, both of which are consequences of Poincar\'e duality (cf.\ \cite{Wall-SCM}, \cite{Kirby-Siebenmann:1977-1}, \cite{Mi66}).

\begin{proposition}[{\cite[Proposition~2.35]{paper1}}] \label{prop:WT-man}
Let $M$ and $N$ be $n$-manifolds. Let $G = \pi_1(N)$ with orientation character $w \colon G \rightarrow \left\{ \pm 1 \right\}$, and let $f \colon M \to N$ be a homotopy equivalence. Then $\tau(f) \in \mathcal{J}_n(G,w)$. 
\end{proposition}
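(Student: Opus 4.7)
The plan is to exploit Poincar\'e duality for $M$ and $N$ to force a symmetry on $\tau(f)$ under the $w$-twisted involution. Set $\theta = \pi_1(f) \colon \pi_1(M) \xrightarrow{\cong} G$. After cellular approximation, $f$ induces a chain homotopy equivalence $f_* \colon C_*(M; \Z G^\theta) \to C_*(N; \Z G)$ of finitely generated, free, based $\Z G$-module chain complexes, and $\tau(f) := \tau(f_*) \in \Wh(G,w)$. Its cochain dual $f^* \colon C^*(N; \Z G) \to C^*(M; \Z G^\theta)$ is again a chain homotopy equivalence, and combining \cref{lem:WT-ch-dual}, \cref{def:WT-ch-cochain}, and \cref{lem:WT-ch-shift}, its torsion regarded as a map $C^{n-*}(N;\Z G) \to C^{n-*}(M;\Z G^\theta)$ equals $(-1)^n\ol{\tau(f)}$.

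Next I would invoke Poincar\'e duality on $M$ and $N$. Capping with the fundamental classes gives chain homotopy equivalences
\[
\Phi_M \colon C^{n-*}(M;\Z G^\theta) \to C_*(M;\Z G^\theta), \qquad \Phi_N \colon C^{n-*}(N;\Z G) \to C_*(N;\Z G),
\]
where the orientation character $w$ is absorbed into the involution used to view the cochain complexes as left $\Z G$-module complexes; this is precisely what produces the $w$-twisted involution on $\Wh(G,w)$. Naturality of the cap product, together with $f_*[M] = \pm [N]$, gives the chain-level identity $f_* \circ \Phi_M \circ f^* \simeq \pm\,\Phi_N$. Applying \cref{prop:chain-hom-chain-equivs-same-torsion} and \cref{lem:WT-ch-comp}, together with the computation of $\tau(f^*)$ above and the fact that $\pm 1$ is killed in $\Wh(G)$, yields the key identity
\[
\tau(f) + (-1)^n \ol{\tau(f)} \;=\; \tau(\Phi_N) - \theta_*\!\bigl(\tau(\Phi_M)\bigr) \quad \text{in } \Wh(G,w).
\]

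To finish, I would need $\tau(\Phi_M) = 0 = \tau(\Phi_N)$, which immediately gives $\tau(f) = -(-1)^n\ol{\tau(f)}$, i.e.\ $\tau(f) \in \mathcal{J}_n(G,w)$. This vanishing is the main obstacle, and is exactly where Poincar\'e duality enters in an essential way. It is a classical theorem of Milnor \cite{Mi66}: for a closed smooth or PL $n$-manifold equipped with a simplicial triangulation, the dual cell decomposition pairs each cell canonically with a simplex, so that $\Phi_M$ becomes a (signed) permutation of the bases and has vanishing torsion; general CW structures on such a manifold then give the same answer after a simple homotopy equivalence, using \cref{lem:WT-ch-comp}. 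The topological case is reduced to the PL case via \cref{thm:chapman} and a handle-decomposition argument, as in \cite{Kirby-Siebenmann:1977-1,Wall-SCM}. Modulo this classical input, the naturality diagram above does all the work.
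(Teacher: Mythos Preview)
The paper does not supply its own proof of this proposition; it is quoted from \cite[Proposition~2.35]{paper1} as background, with no argument given here. Your sketch is the standard proof and matches what is in the cited sources: use naturality of the cap product to obtain the chain-homotopy identity $f_* \circ \Phi_M \circ f^* \simeq \pm \Phi_N$, take torsions, and then appeal to the classical fact that the Poincar\'e duality chain equivalence is simple (Milnor's dual-cell argument in the PL case, extended to topological manifolds via Kirby--Siebenmann and Chapman). One minor notational wrinkle: you defined $\Phi_M$ with $\Z G^\theta$ coefficients, so its torsion already lives in $\Wh(G)$ and the extra $\theta_*$ in your displayed identity is redundant; but since that torsion vanishes anyway this does not affect the conclusion.
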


\begin{proposition}[{\cite[Proposition~2.38]{paper1}}] \label{prop:WT-hcob}
Let $W$ be an $h$-cobordism between $n$-manifolds $M$ and $N$. Let $G = \pi_1(W)$, let $w \colon G \rightarrow \left\{ \pm 1 \right\}$ be the orientation character of $W$, and identify $\pi_1(M)$ with $G$ via the inclusion. If $f \colon N \rightarrow M$ denotes the homotopy equivalence induced by $W$, then $\tau(f) = -\tau(W,M) + (-1)^n \overline{\tau(W,M)} \in \mathcal{I}_n(G,w)$.
\end{proposition}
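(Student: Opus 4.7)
The plan is to write the induced homotopy equivalence $f\colon N\to M$ as a composition involving the two boundary inclusions, apply the composition formula for Whitehead torsion (\cref{prop:WT-composition}), and then use Poincar\'e--Lefschetz duality on the $h$-cobordism $W$ to swap the two resulting relative torsions. The identity $\tau(f)\in\mathcal I_n(G,w)$ will fall out of the final expression essentially for free, by unwinding the definition of $\mathcal I_n(G,w)$.

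\textbf{Step 1: Reduce to relative torsions.} Fix a homotopy inverse $r\colon W\to M$ of the inclusion $i_M\colon M\to W$; by construction $f\simeq r\circ i_N$, where $i_N\colon N\to W$ is the other boundary inclusion. Applying \cref{prop:WT-composition} twice,
\[
\tau(f)=\tau(r)+r_*(\tau(i_N)), \qquad 0=\tau(\id_M)=\tau(r\circ i_M)=\tau(r)+r_*(\tau(i_M)),
\]
so $\tau(r)=-r_*(\tau(i_M))$ and therefore $\tau(f)=r_*\bigl(\tau(i_N)-\tau(i_M)\bigr)$. Under the identification $\pi_1(M)=G=\pi_1(W)$ via $(i_M)_*$, the map $r_*$ acts as the identity on $\Wh(G,w)$, so this reduces to $\tau(f)=\tau(i_N)-\tau(i_M)$. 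Next, identify $\tau(i_M)$ and $\tau(i_N)$ with the relative torsions $\tau(W,M)$ and $\tau(W,N)$; this is standard from the definition via the acyclic relative cellular $\Z G$-chain complexes $C_*(\wt W,\wt M)$ and $C_*(\wt W,\wt N)$, using \cref{lem:WT-ch-add} on the short exact sequence $0\to C_*(\wt M)\to C_*(\wt W)\to C_*(\wt W,\wt M)\to 0$ (and its analogue for $N$). Thus
\[
\tau(f)=\tau(W,N)-\tau(W,M)\in\Wh(G,w).
\]

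\textbf{Step 2: Duality.} The key input is Poincar\'e--Lefschetz duality for the $h$-cobordism $W$: capping with the fundamental class of $(W,M\sqcup N)$ produces a chain homotopy equivalence
\[
-\cap[W,\partial W]\colon C^{n+1-*}(\wt W,\wt N)\xrightarrow{\ \simeq\ }C_*(\wt W,\wt M)
\]
of based $\Z G$-chain complexes, where the involution defining the dual cochain complex is the one associated with the orientation character $w$ of $W$. Since this map has trivial Whitehead torsion (the bases are chosen to be Poincar\'e dual at the cell level, up to a contractible ambiguity handled in the usual way), combining \cref{def:WT-ch-cochain}, \cref{lem:WT-ch-dual}, and the shift formula \cref{lem:WT-ch-shift} yields
\[
\tau(W,N)=(-1)^n\,\overline{\tau(W,M)}\in\Wh(G,w).
\]
Substituting into Step~1 gives exactly
\[
\tau(f)=-\tau(W,M)+(-1)^n\,\overline{\tau(W,M)},
\]
and taking $x=-\tau(W,M)$ in the defining formula of $\mathcal I_n(G,w)=\{x-(-1)^n\bar x\}$ shows $\tau(f)\in\mathcal I_n(G,w)$.

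\textbf{Main obstacle.} The delicate point is the sign bookkeeping in the duality step: one must identify the correct degree shift so that the dual cochain complex of $(W,N)$ lands in the same grading as the chain complex of $(W,M)$, and track the $(-1)^n$ rather than $(-1)^{n+1}$ coming from Lemmas~\ref{lem:WT-ch-shift} and \ref{lem:WT-ch-dual}. Everything else is a routine application of the composition and additivity laws for $\tau$; the content of the proposition really sits in the (classical) statement that the Whitehead torsion of an $h$-cobordism satisfies Poincar\'e duality with the $w$-twisted involution, at which point the result for $\tau(f)$ is algebraic.
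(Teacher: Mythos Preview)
The paper does not give its own proof of this proposition; it is quoted verbatim from \cite[Proposition~2.38]{paper1} and used as a black box. Your argument is the standard one (essentially Milnor's duality theorem for $h$-cobordisms combined with the composition law), and it is correct. A couple of minor comments: in Step~1 the identification $\tau(i_M)=\tau(W,M)$ is exactly the statement that the torsion of an inclusion equals the torsion of the relative acyclic complex, which follows from \cref{lem:WT-ch-add} as you indicate; and in Step~2 the phrase ``this map has trivial Whitehead torsion'' is really the assertion that, with dual handle decompositions, the Poincar\'e--Lefschetz chain equivalence identifies the standard bases, so that $\tau\bigl(C^{n+1-*}(\wt W,\wt N)\bigr)=\tau\bigl(C_*(\wt W,\wt M)\bigr)$. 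The remaining algebra then gives $\tau(W,N)=(-1)^n\overline{\tau(W,M)}$ (here $\dim W=n+1$, and with Milnor's convention the sign is $(-1)^{\dim W-1}=(-1)^n$), which is precisely the formula you substitute. Your identification of the sign bookkeeping as the only nontrivial point is accurate.
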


\section{Split chain complexes} \label{s:scc}

In this section we define split chain complexes and prove \cref{lem:cheq-diag} about computing the Whitehead torsion of a chain homotopy equivalence between two split chain complexes.

Fix some positive integer $n$ and a group $G$ with orientation character $w\colon G \rightarrow \left\{ \pm 1 \right\}$. Every chain complex $C_*$ will be assumed to consist of finitely generated free, left $\Z G$-modules, satisfying the condition $C_i=0$ for $i<0$ and $i>n$ (and similarly for cochain complexes).

\begin{definition} \label{def:lu}
Let $(C_*,d_*)$ be a chain complex. We define the chain complex $(C^{\ell}_*,d^{\ell}_*)$ by $C^{\ell}_i = C_i$ and $d^{\ell}_i = d_i \colon C_i \to C_{i-1}$ if $i < \frac{n}{2}$ and $C^{\ell}_i=0$ if $i \geq \frac{n}{2}$. Similarly, we define $(C^u_*,d^u_*)$ by $C^u_i = C_i$ if $i > \frac{n}{2}$, $d^u_i = d_i$ if $i > \frac{n}{2} + 1$, and $C^u_i=0$ if $i \leq \frac{n}{2}$. 

Let $(C^*,d^*)$ be a cochain complex. We define $((C^u)^*,(d^u)^*)$ by $(C^u)^i = C^i$ and $(d^u)^i = d^i \colon C^i \to C^{i+1}$ if $i > \frac{n}{2}$ and $(C^u)^i=0$ if $i \leq \frac{n}{2}$. Similarly, we define $((C^{\ell})^*,(d^{\ell})^*)$ by $(C^{\ell})^i = C^i$ if $i < \frac{n}{2}$, $(d^{\ell})^i = d^i$ if $i < \frac{n}{2} - 1$, and $(C^{\ell})^i=0$ if $i \geq \frac{n}{2}$. 
\end{definition}

\begin{definition} \label{def:split}
We say that a chain complex $(C_*,d_*)$ \emph{splits} if $C_{n/2}=0$ (if $n$ is even) or $d_{\left\lceil n/2 \right\rceil}=0$ (if $n$ is odd). 
We say that a cochain complex $(C^*,d^*)$ \emph{splits} if $C^{n/2}=0$ (if $n$ is even) or $d^{\left\lfloor n/2 \right\rfloor}=0$ (if $n$ is odd). 
\end{definition}

Note that $C_*$ (resp.\ $C^*$) splits if and only if $C_* \cong C^{\ell}_* \oplus C^u_*$ (resp.\ $C^* \cong (C^u)^* \oplus (C^{\ell})^*$). 

\begin{definition}
Let $C_*$ and $D_*$ be chain complexes. We define $\chEq(C_*,D_*) \subseteq \Hom(C_*,D_*) / {\simeq}$ to be the set of chain homotopy classes of chain homotopy equivalences $C_* \rightarrow D_*$.
\end{definition}

\begin{lemma} \label{lem:split-cheq}
Suppose that $(C_*,d^C_*)$ and $(D_*d^D_*)$ are split chain complexes. Then there is a bijection 
\[\chEq(C_*,D_*) \cong \chEq(C^{\ell}_*,D^{\ell}_*) \times \chEq(C^u_*,D^u_*),\]
where the projections $\chEq(C_*,D_*) \rightarrow \chEq(C^{\ell}_*,D^{\ell}_*)$ and $\chEq(C_*,D_*) \rightarrow \chEq(C^u_*,D^u_*)$ are defined by restriction.
\end{lemma}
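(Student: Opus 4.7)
The plan is to show that for split chain complexes, both the differentials and any chain map decompose as direct sums with respect to the splittings $C_* \cong C^\ell_* \oplus C^u_*$ and $D_* \cong D^\ell_* \oplus D^u_*$, so that the restriction map has a two-sided inverse given by direct sum.

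First I would make the degree-wise observation that for every integer $i$ at least one of $C^\ell_i, D^u_i$ is zero, and at least one of $C^u_i, D^\ell_i$ is zero, since $C^\ell_*$ is supported in degrees $<n/2$ while $D^u_*$ is supported in degrees $>n/2$. Therefore any $\Z G$-linear map $f \colon C_* \to D_*$ automatically has vanishing $(\ell \to u)$ and $(u \to \ell)$ components, so $f = f^{\ell\ell} \oplus f^{uu}$ with respect to the decomposition. For the differentials themselves, the only potentially nonzero cross component is $(d^C)^{u \to \ell}$ at degree $\lceil n/2 \rceil$ when $n$ is odd, but this vanishes by the split hypothesis (and analogously for $d^D$). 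Hence $d^C$ and $d^D$ are themselves direct sums, and $f^{\ell\ell}, f^{uu}$ are bona fide chain maps.

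Next I would check that restriction descends to chain homotopy classes of chain homotopy equivalences. Given a chain homotopy $H \colon f \simeq g$, inspection of the $(\ell \to \ell)$ block of the equation $dH + Hd = f - g$ (again using that the cross blocks of $d^C$ and $d^D$ vanish) shows that $H^{\ell \to \ell}$ is a chain homotopy $f^{\ell\ell} \simeq g^{\ell\ell}$, and the $(u \to u)$ case is symmetric. If $f$ is a chain homotopy equivalence with inverse $g$, then $g$ decomposes as above, and restricting the chain homotopies $g \circ f \simeq \id_C$ and $f \circ g \simeq \id_D$ to the diagonal blocks shows that $(f^{\ell\ell}, g^{\ell\ell})$ and $(f^{uu}, g^{uu})$ are mutually inverse pairs up to homotopy. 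So restriction is a well-defined map on $\chEq$.

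Finally, surjectivity is immediate since given chain homotopy equivalences $\varphi, \psi$ on the two halves, $\varphi \oplus \psi$ restricts to $(\varphi, \psi)$. For injectivity, if $f^{\ell\ell} \simeq g^{\ell\ell}$ via $H^\ell$ and $f^{uu} \simeq g^{uu}$ via $H^u$, then I would set $H = H^\ell \oplus H^u$ (with zero off-diagonal blocks): the diagonal blocks of $dH + Hd = f - g$ hold by hypothesis, while the off-diagonal blocks both reduce to $0 = 0$ since every term involved vanishes. The one subtle point, and the place where an obstruction might initially be feared, is that for odd $n$ a general chain homotopy $H \colon f \simeq g$ may carry a nonzero $(\ell \to u)$ block at degree $(n-1)/2$, so chain homotopies do \emph{not} decompose cleanly; however, injectivity only requires exhibiting \emph{some} chain homotopy built from the restricted data, and the direct-sum construction above supplies one.
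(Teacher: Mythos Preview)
Your proof is correct and follows essentially the same approach as the paper: both arguments decompose chain maps and chain homotopies into their $\ell$ and $u$ blocks using the degree constraints and the split hypothesis, then verify that restriction and direct sum are mutually inverse on homotopy classes. The only minor stylistic difference is that the paper deduces the statement for $\chEq$ by noting that the bijection on all homotopy classes is compatible with composition and identities (hence preserves invertibles), whereas you restrict the inverse and the homotopies $gf\simeq\id$, $fg\simeq\id$ directly---but this is the same content.
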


\begin{proof}
Since $C_*$ and $D_*$ split, restrictions define an isomorphism 
\[\Hom(C_*,D_*) \cong \Hom(C^{\ell}_*,D^{\ell}_*) \oplus \Hom(C^u_*,D^u_*).\]

Let $f,f'\colon C_* \rightarrow D_*$ be chain maps. The restrictions of a chain homotopy between $f$ and $f'$ are homotopies between $f \big| _{C^{\ell}_*}$ and $f' \big| _{C^{\ell}_*}$ and between $f \big| _{C^u_*}$ and $f' \big| _{C^u_*}$, respectively, because $d^D_{\left\lceil n/2 \right\rceil}=0$ and $d^C_{\left\lfloor n/2 \right\rfloor+1}=0$. Conversely, a pair of homotopies between $f \big| _{C^{\ell}_*}$ and $f' \big| _{C^{\ell}_*}$ and between $f \big| _{C^u_*}$ and $f' \big| _{C^u_*}$ can be combined to a homotopy between $f$ and $f'$. Therefore there is an induced isomorphism $(\Hom(C_*,D_*) / {\simeq}) \cong (\Hom(C^{\ell}_*,D^{\ell}_*) / {\simeq}) \oplus (\Hom(C^u_*,D^u_*) / {\simeq})$ on homotopy classes. 

This isomorphism is defined for every pair $(C_*,D_*)$ of split chain complexes, it is compatible with the composition maps (induced by $\Hom(D_*,E_*) \times \Hom(C_*,D_*) \to \Hom(C_*,E_*)$, and similarly for $\Hom(C^{\ell}_*,D^{\ell}_*)$ and $\Hom(C^u_*,D^u_*)$) and $[\id_{C_*}] \in (\Hom(C_*,C_*) / {\simeq})$ corresponds to $([\id_{C^{\ell}_*}],[\id_{C^u_*}])$. Since $\chEq(C_*,D_*)$ consists of those elements of $(\Hom(C_*,D_*) / {\simeq})$ which have an inverse in $(\Hom(D_*,C_*) / {\simeq})$ (and similarly for $\chEq(C^{\ell}_*,D^{\ell}_*)$ and $\chEq(C^u_*,D^u_*)$), this implies that the isomorphism above restricts to a bijection $\chEq(C_*,D_*) \cong \chEq(C^{\ell}_*,D^{\ell}_*) \times \chEq(C^u_*,D^u_*)$.
\end{proof}

Recall that the orientation character $w$ determines an involution on $\Z G$, which allows us to define the dual of a left $\Z G$-module as another left $\Z G$-module. Note that a chain complex $C_*$ splits if and only if the dual cochain complex $C^*$ (defined by $C^i = (C_i)^*$) splits. 

\begin{definition} \label{def:ch-deg1}
Let $C_*$ and $D_*$ be chain complexes, $C^*$ and $D^*$ the dual cochain complexes, and let $P\colon C^{n-*} \rightarrow C_*$ and $Q\colon D^{n-*} \rightarrow D_*$ be chain homotopy equivalences. Then define $\chEq(C_*,D_*)_{P,Q} \subseteq \chEq(C_*,D_*)$ to be the subset consisting of those chain homotopy equivalences $f\colon C_*\rightarrow D_*$ which make the diagram
\[
\xymatrix{
C^{n-*} \ar[d]_{P} & D^{n-*} \ar[l]_-{f^*} \ar[d]^{Q} \\
C_* \ar[r]^-{f} & D_*
}
\]
of chain complexes commute up to chain homotopy, where $f^*$ denotes the dual of $f$.
\end{definition}

\begin{lemma} \label{lem:cheq-diag}
Let $C_*$ and $D_*$ be chain complexes, $C^*$ and $D^*$ the dual cochain complexes, and let $P\colon C^{n-*} \rightarrow C_*$ and $Q\colon D^{n-*} \rightarrow D_*$ be chain homotopy equivalences. Suppose that $C_*$ and $D_*$ split, so that $P$ and $Q$ restrict to chain homotopy equivalences $P \big| _{(C^{\ell})^{n-*}}\colon (C^{\ell})^{n-*} \rightarrow C^u_*$ and $Q \big| _{(D^{\ell})^{n-*}}\colon (D^{\ell})^{n-*} \rightarrow D^u_*$, and let $\alpha = \tau(P \big| _{(C^{\ell})^{n-*}})$ and $\beta = \tau(Q \big| _{(D^{\ell})^{n-*}}) \in \Wh(G,w)$. Then there is a commutative diagram
\[
\hspace{15mm}
\xymatrix{
\chEq(C_*,D_*)_{P,Q} \ar[r]^-{\tau} \ar[d] & \Wh(G,w) \\
\chEq(C^{\ell}_*,D^{\ell}_*) \ar[r]^-{\tau} & \Wh(G,w) \ar[u]_{x \mapsto x-(-1)^n\overline{x} + \beta - \alpha}
}
\]
where the vertical map on the left is given by restriction, i.e.\ it is the composition of the inclusion $\chEq(C_*,D_*)_{P,Q} \rightarrow \chEq(C_*,D_*)$ and the projection $\chEq(C_*,D_*) \rightarrow \chEq(C^{\ell}_*,D^{\ell}_*)$ from \cref{lem:split-cheq}.
\end{lemma}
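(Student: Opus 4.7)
The plan is to use the algebraic splitting of $C_*$ and $D_*$ to reduce $\tau(f)$ to the lower-half restriction $\tau(f^\ell)$ plus duality error terms coming from $P$ and $Q$. Since $C_*$ and $D_*$ split, any $f \in \chEq(C_*,D_*)$ decomposes as $f = f^\ell \oplus f^u$ with $f^\ell \colon C^\ell_* \to D^\ell_*$ and $f^u \colon C^u_* \to D^u_*$. Applying \cref{lem:WT-ch-add} to the tautological split short exact sequence $0 \to C^\ell_* \to C_* \to C^u_* \to 0$ (and similarly for $D_*$) gives
\[
\tau(f) = \tau(f^\ell) + \tau(f^u) \in \Wh(G,w),
\]
so the task reduces to expressing $\tau(f^u)$ in terms of $\tau(f^\ell)$.

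Next, I would interpret the commutativity of the diagram in \cref{def:ch-deg1} as the chain homotopy relation $Q \simeq f \circ P \circ f^*$ between maps $D^{n-*} \to D_*$. All of $C_*$, $D_*$, $C^{n-*}$, $D^{n-*}$ split, so each map in this composition decomposes into upper and lower halves; in particular the upper half of $f^* \colon D^{n-*} \to C^{n-*}$ is the reindexed dual $(f^\ell)^* \colon (D^\ell)^{n-*} \to (C^\ell)^{n-*}$. By \cref{lem:split-cheq}, the chain homotopy splits in the same way, so restricting to the upper half gives
\[
Q|_{(D^\ell)^{n-*}} \simeq f^u \circ P|_{(C^\ell)^{n-*}} \circ (f^\ell)^*.
\]
Taking Whitehead torsion and applying \cref{lem:WT-ch-comp} yields $\beta = \tau((f^\ell)^*) + \alpha + \tau(f^u)$.

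The key computation is then
\[
\tau\bigl((f^\ell)^* \colon (D^\ell)^{n-*} \to (C^\ell)^{n-*}\bigr) = (-1)^n\,\overline{\tau(f^\ell)}.
\]
This is obtained by stripping the degree-$n$ reindexing via \cref{lem:WT-ch-shift} (which contributes the sign $(-1)^n$), passing freely between the $-*$ and cochain conventions by \cref{def:WT-ch-cochain}, and then invoking \cref{lem:WT-ch-dual}. Substituting back gives $\tau(f^u) = \beta - \alpha - (-1)^n\overline{\tau(f^\ell)}$, and combining with the splitting of $\tau(f)$ yields
\[
\tau(f) = \tau(f^\ell) - (-1)^n\,\overline{\tau(f^\ell)} + \beta - \alpha,
\]
which is exactly the content of the commutative diagram in the statement, whose left vertical arrow is the restriction $f \mapsto f^\ell$ (well-defined on chain-homotopy classes by \cref{lem:split-cheq}).

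The only genuinely delicate step is the third one: simultaneously keeping straight the degree shift by $n$, the cochain-to-chain reindexing, and the $w$-twisted involution, so as to produce precisely the sign $(-1)^n$ and the bar on $\tau(f^\ell)$. Everything else is straightforward bookkeeping with the standard torsion identities collected in \cref{s:prelims}.
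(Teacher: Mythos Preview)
Your proposal is correct and follows essentially the same approach as the paper's proof: split $\tau(f)$ into $\tau(f^\ell)+\tau(f^u)$ via \cref{lem:WT-ch-add}, restrict the duality square to the upper half to obtain $\beta = \tau((f^\ell)^*) + \alpha + \tau(f^u)$, and then compute $\tau((f^\ell)^*) = (-1)^n\overline{\tau(f^\ell)}$ using \cref{lem:WT-ch-shift}, \cref{def:WT-ch-cochain}, and \cref{lem:WT-ch-dual}. The only cosmetic difference is that you phrase the square as the single relation $Q \simeq f \circ P \circ f^*$ before restricting, whereas the paper restricts the square directly.
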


\begin{proof}
Let $f \in \chEq(C_*,D_*)_{P,Q}$. Since $C_*$ and $D_*$ split, by \cref{lem:WT-ch-add}  we have that $\tau(f) = \tau(f \big| _{C^{\ell}_*}) + \tau(f \big| _{C^u_*})$, so it is enough to prove that $\tau(f \big| _{C^u_*}) = -(-1)^n\overline{\tau(f \big| _{C^{\ell}_*})} + \beta - \alpha$. 

The diagram in \cref{def:ch-deg1} restricts to a homotopy commutative diagram
\[
\xymatrix{
(C^{\ell})^{n-*} \ar[d]_{P |_{(C^{\ell})^{n-*}}} & & (D^{\ell})^{n-*} \ar[ll]_-{f^*|_{(D^{\ell})^{n-*}}} \ar[d]^{Q |_{(D^{\ell})^{n-*}}} \\
C^u_* \ar[rr]^-{f|_{C^u_*}} & & D^u_*
}
\] 
By \cref{prop:chain-hom-chain-equivs-same-torsion} and \cref{lem:WT-ch-comp} we have $$\tau(f^* \big| _{(D^{\ell})^{n-*}}) + \alpha + \tau(f \big| _{C^u_*}) = \beta,$$ or equivalently $$\tau(f \big| _{C^u_*}) = -\tau(f^* \big| _{(D^{\ell})^{n-*}}) + \beta - \alpha.$$ Therefore it suffices to prove that $$\tau(f^* \big| _{(D^{\ell})^{n-*}}) = (-1)^n\overline{\tau(f \big| _{C^{\ell}_*})}.$$

By \cref{lem:WT-ch-shift} we have $\tau(f^* \big| _{(D^{\ell})^{n-*}}) = (-1)^n \tau(f^* \big| _{(D^{\ell})^{-*}})$, and it follows from \cref{def:WT-ch-cochain} that $\tau(f^* \big| _{(D^{\ell})^{-*}}) = \tau(f^* \big| _{(D^{\ell})^*})$. Finally by \cref{lem:WT-ch-dual}, we have $\tau(f^* \big| _{(D^{\ell})^*}) = \overline{\tau(f \big| _{C^{\ell}_*})}$, which completes the proof. 
\end{proof}

\section{Thickenings} \label{s:thick}

We recall Wall's definition of thickenings and some of their basic properties. Fix positive integers $n, k$ with $n \geq 2k+1$, and let $K$ be a connected finite CW complex of dimension (at most) $k$. 

\begin{definition}
Suppose that $n \geq \max(k+3,2k+1)$. An $n$-dimensional \emph{thickening} of $K$ is a simple homotopy equivalence $f_T\colon K \rightarrow T$, where $T$ is an $n$-manifold with boundary such that the inclusion map $\partial T \hookrightarrow T$ induces an isomorphism $\pi_1(\partial T) \cong \pi_1(T)$.

Two thickenings $f_T\colon K \rightarrow T$ and $f_{T'}\colon K \rightarrow T'$ are equivalent if there is a diffeomorphism $H\colon T \rightarrow T'$ such that $H \circ f_T \simeq f_{T'}$.
\end{definition}

Since $n \geq 2k+1$, we can assume that $f_T$ is an embedding.
From now on thickening will mean $n$-dimensional thickening, unless indicated otherwise.

\begin{lemma}[Wall {\cite[Proposition 5.1]{Wa66}}] \label{lem:thick-class}
Suppose that $n \geq \max(6,2k+1)$. The assignment $(f_T\colon K \rightarrow T) \mapsto f_T^*(\nu_T)$, where $\nu_T$ denotes the stable normal bundle of $T$, is a bijection between the set of equivalence classes of thickenings of $K$ and the set of isomorphism classes of stable vector bundles over $K$. 
\qed
\end{lemma}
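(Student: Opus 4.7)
My plan is to prove well-definedness, surjectivity, and injectivity separately. Well-definedness is immediate: if $H \colon T \to T'$ is a diffeomorphism with $H \circ f_T \simeq f_{T'}$, then $H^* \nu_{T'} \cong \nu_T$, hence $f_T^* \nu_T \cong (H \circ f_T)^* \nu_{T'} \cong f_{T'}^* \nu_{T'}$ as stable bundles on $K$.

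For surjectivity, given a stable bundle $\xi$ over $K$, I would construct a thickening as a disk bundle. Because $n - k \geq k+1 > \dim K$, cellular obstruction theory makes the stabilisation map $\mathrm{Vect}_{n-k}(K) \to \wt{KO}(K)$ surjective, so every stable class over $K$ is represented by a genuine rank-$(n{-}k)$ bundle. Picking $\eta$ of rank $n-k$ whose stable class equals $-\tau_K - \xi$ (with $\tau_K$ the restriction of the stable tangent bundle of any chosen pre-existing thickening of $K$, which certainly exists e.g.\ by regular-neighbourhood theory), I would set $T := D(\eta)$ (after suitable smoothing of $K$) with $f_T$ the zero section. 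Then $f_T$ is a simple homotopy equivalence (the zero section is a deformation retract respecting a cellular structure extending one on $K$); the inclusion $\partial T = S(\eta) \hookrightarrow T$ is a $\pi_1$-isomorphism because the fibres $S^{n-k-1}$ are simply connected when $n - k - 1 \geq 2$, which holds since $n \geq 2k+1$ and $n \geq 6$; and $TT|_K = TK \oplus \eta$ combined with $\nu_T + TT = 0$ in $\wt{KO}$ yields $f_T^* \nu_T = \xi$ stably.

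For injectivity, suppose $f_T, f_{T'}$ are thickenings with $f_T^* \nu_T \cong f_{T'}^* \nu_{T'}$. I would embed both $T, T'$ into a common large $\R^{n+N}$, obtaining two embeddings of $K$ with stably isomorphic normal bundles there. For $n + N$ sufficiently large, any two embeddings of a $k$-complex into $\R^{n+N}$ are ambient isotopic, and stably isomorphic bundles of rank at least $k+1$ over a $k$-complex are isomorphic on the nose. Tubular neighbourhood uniqueness then identifies closed tubular neighbourhoods $U \subset T$ and $U' \subset T'$ of $K$ via a diffeomorphism over $K$ that is compatible with the embeddings $f_T$ and $f_{T'}$.

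The main obstacle will be extending this to a diffeomorphism $H \colon T \to T'$ with $H \circ f_T \simeq f_{T'}$. For this I would argue that $T \sm \interior U$ is an $h$-cobordism from $\partial U$ to $\partial T$, and that its Whitehead torsion vanishes because $f_T$ is a \emph{simple} homotopy equivalence: the inclusion $K \hookrightarrow U$ is simple (as a zero section) and so is the composition $K \to U \hookrightarrow T$, forcing $U \hookrightarrow T$ itself to be simple, and hence the complementary cobordism to be an $s$-cobordism; the same applies to $T'$. The $s$-cobordism theorem, applicable because $n \geq 6$, then yields a diffeomorphism $T \sm \interior U \cong T' \sm \interior U'$ extending the boundary identification, and gluing with the tubular-neighbourhood diffeomorphism produces the required $H$. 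This is where both dimensional hypotheses $n \geq 6$ (for $s$-cobordism) and $n \geq 2k+1$ (for embedding uniqueness) are used crucially.
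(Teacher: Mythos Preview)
The paper does not prove this lemma; it is stated as a direct citation of Wall, with no accompanying argument. So the comparison is with Wall's original proof, which is handle-theoretic: thickenings are built and compared cell by cell over the skeleta of $K$.

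Your surjectivity argument has a genuine gap. The complex $K$ is an arbitrary finite CW complex, not a manifold, so the total space $D(\eta)$ of a disk bundle over $K$ is not a smooth manifold, and the phrase ``after suitable smoothing of $K$'' cannot repair this: a $k$-complex need not even be homotopy equivalent to a $k$-manifold (take any presentation $2$-complex whose fundamental group is not a surface group). Wall instead constructs the thickening inductively over the skeleta of $K$, attaching an $i$-handle for each $i$-cell with framing dictated by the prescribed bundle.

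Your injectivity argument has a parallel problem. You invoke ``closed tubular neighbourhoods $U \subset T$ and $U' \subset T'$ of $K$'', but tubular neighbourhoods are defined for submanifolds, not for embedded complexes; the regular neighbourhood of $K$ inside $T$ is $T$ itself (up to a boundary collar), leaving no complementary region $T \setminus \interior U$ on which to run the $s$-cobordism theorem. What the ambient-isotopy step in $\R^{n+N}$ actually yields is a diffeomorphism of the \emph{stabilised} thickenings $T \times D^N \cong T' \times D^N$; destabilising this is exactly the nontrivial content of the lemma. Your final $s$-cobordism idea is correct in spirit---one does finish by recognising an $s$-cobordism and invoking the theorem, and this is where $n \ge 6$ is used---but producing a common $n$-dimensional sub-thickening $U$ embedded in both $T$ and $T'$ requires Wall's embedding theorem or an equivalent handle argument, which you have not supplied.
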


\begin{remark} \label{rem:thick-5}
The arguments of \cite[{\S}5, {\S}7 and {\S}8]{Wa66} also show that if $n = 5 \geq 2k+1$, then the assignment $(f_T \colon K \rightarrow T) \mapsto f_T^*(\nu_T)$ is surjective. 
\end{remark}

\begin{lemma}[Wall] \label{lem:thick-emb}
Suppose that $n \geq \max(6,2k+1)$, $M$ is an $n$-manifold (possibly with boundary), and $\varphi \colon K \rightarrow M$ is a continuous map. Then there is a thickening $f_T \colon K \rightarrow T$ and an embedding $i \colon T \rightarrow \interior M$ such that $i \circ f_T \simeq \varphi$. The thickening $f_T$ is unique up to equivalence. 
\end{lemma}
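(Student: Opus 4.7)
The plan is to establish existence by a combination of general position and regular neighbourhoods, and then to deduce uniqueness from the bundle-theoretic classification of thickenings in \cref{lem:thick-class}.

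For existence, I would first replace $K$ by a simple homotopy equivalent finite simplicial complex (without changing notation) and use cellular approximation to assume $\varphi$ factors through a triangulation of $\interior M$. Since $n \geq 2k+1$, general position gives a homotopy from $\varphi$ to a locally flat embedding $\varphi' \colon K \hookrightarrow \interior M$. Take $T$ to be a regular neighbourhood of $\varphi'(K)$ in $\interior M$; then $T$ is a codimension-zero compact submanifold with boundary which collapses onto $\varphi'(K)$, so the composite
\[
f_T \colon K \xrightarrow{\ \varphi'\ } \varphi'(K) \hookrightarrow T
\]
is a simple homotopy equivalence, and by construction $i \circ f_T \simeq \varphi$, where $i \colon T \hookrightarrow \interior M$ is the inclusion. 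To verify that $f_T$ is a thickening, it remains to check that $\pi_1(\partial T) \to \pi_1(T)$ is an isomorphism. The hypotheses $n \geq \max(6, 2k+1)$ force $n - k \geq 3$, so $\varphi'(K)$ has codimension at least three in $T$. General position applied to loops and disks then shows that $\pi_1(T \smallsetminus \varphi'(K)) \to \pi_1(T)$ is an isomorphism, and since $T \smallsetminus \varphi'(K)$ deformation retracts onto $\partial T$, we obtain the required $\pi_1$-isomorphism.

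For uniqueness, suppose $f_{T_0} \colon K \to T_0$ and $f_{T_1} \colon K \to T_1$ are two thickenings with embeddings $i_j \colon T_j \hookrightarrow \interior M$ satisfying $i_j \circ f_{T_j} \simeq \varphi$ for $j = 0, 1$. Because each $T_j$ is a codimension-zero submanifold of $M$, we have $TT_j \cong i_j^*(TM)$, and stably $\nu_{T_j} \cong i_j^*(\nu_M)$. Hence
\[
f_{T_j}^*(\nu_{T_j}) \;\cong\; (i_j \circ f_{T_j})^*(\nu_M) \;\cong\; \varphi^*(\nu_M),
\]
independently of $j$. Applying \cref{lem:thick-class} yields the equivalence of the two thickenings.

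The main obstacle is the first step of the existence argument: homotoping $\varphi$ to a genuine embedding of the (a priori non-manifold) complex $K$ into $\interior M$, choosing a regular neighbourhood in the ambient smooth (or PL) category, and ensuring that the collapse produces a simple homotopy equivalence rather than merely a homotopy equivalence. All of these are classical, but they depend on a careful choice of triangulation/smoothing of $K$ and $M$; everything else (the $\pi_1$-verification and the uniqueness argument via \cref{lem:thick-class}) is essentially formal once the embedding and its regular neighbourhood are in place.
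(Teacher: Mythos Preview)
Your uniqueness argument is identical to the paper's: both pull back the stable normal bundle of $M$ along the two embedded thickenings, observe that the result is $\varphi^*(\nu_M)$ in each case, and invoke \cref{lem:thick-class}.

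For existence the paper simply cites Wall's embedding theorem from \cite{Wa66}, whereas you unpack the underlying argument via general position and regular neighbourhoods. What you have written is essentially the PL version of Wall's construction and is correct as such; you correctly identify the one delicate point, namely transporting the argument to the smooth category (the paper's default). Wall's own treatment in \cite{Wa66} proceeds via handle theory rather than by embedding the bare complex and taking a regular neighbourhood, which is what makes it work smoothly; your sketch would need the usual PL/smooth compatibility results to be made rigorous in the smooth setting, exactly as you note in your final paragraph. So there is no gap, only a difference in packaging: the paper outsources existence to Wall, while you give the classical hands-on argument.
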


\begin{proof}
The existence part follows from Wall's embedding theorem \cite{Wa66}. For the uniqueness assume that there is another thickening $f_{T'} \colon K \rightarrow T'$ and an embedding $i' \colon T' \rightarrow M$ such that $i' \circ f_{T'} \simeq \varphi$. Then $f_T^*(\nu_T) \cong f_T^*(i^*(\nu_M)) \cong \varphi^*(\nu_M) \cong f_{T'}^*((i')^*(\nu_M)) \cong f_{T'}^*(\nu_{T'})$, so by \cref{lem:thick-class} it follows that $f_T$ and $f_{T'}$ are equivalent. 
\end{proof}

\begin{lemma} \label{lem:thick-conn}
Suppose that $n \geq \max(k+3,2k+1)$ and $f_T \colon K \rightarrow T$ is a thickening. Then the inclusion $\partial T \rightarrow T$ is $(n-k-1)$-connected.
\end{lemma}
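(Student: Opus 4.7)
The plan is to obtain a handle decomposition of $T$ whose handles all have index at most $k$, and then dualize to read off the connectivity of $\partial T \hookrightarrow T$. Concretely, following Wall's explicit construction of thickenings in \cite{Wa66}, $T$ is built from a ball around the image of the $0$-skeleton of $K$ by successively attaching an $i$-handle for each $i$-cell of $K$ (with $i \leq k$); equivalently, after embedding $K$ in $T$ via $f_T$ (which is possible since $n \geq 2k+1$), one can take $T$ to be a regular neighbourhood of the image, which collapses to $K$ and hence carries such a handle decomposition. The uniqueness of thickenings up to equivalence (\cref{lem:thick-class}) ensures that we lose no generality by assuming our $T$ has this form.

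Turning this handle decomposition upside down realises $T$ as being built from a collar $\partial T \times I$ by attaching the dual handles, each now of index at least $n-k$. This endows $(T,\partial T)$ with a relative CW structure in which every cell has dimension at least $n-k$, and so $\pi_i(T,\partial T)=0$ for $i \leq n-k-1$ by standard cellular approximation. Equivalently, the inclusion $\partial T \hookrightarrow T$ is $(n-k-1)$-connected.

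The main step requiring some care is the justification of the handle decomposition of $T$ with handles of index at most $k$; this is essentially the content of Wall's construction but deserves an explicit reference. If one prefers to avoid invoking regular neighbourhood theory or Wall's construction, there is a purely homological alternative. Since $f_T$ is a homotopy equivalence and $\dim K \leq k$, the $\Z\pi_1(T)$-chain complex of the universal cover $\wt T$ is chain homotopy equivalent to one concentrated in degrees $0,\dots,k$, whence $H^j(T;\Z\pi_1(T))=0$ for $j>k$; Poincar\'e--Lefschetz duality for the compact manifold with boundary $T$ then yields $H_i(T,\partial T;\Z\pi_1(T))=0$ for $i \leq n-k-1$. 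Since $\pi_1(\partial T)\to\pi_1(T)$ is an isomorphism and $\partial T$ is connected (the latter following from the vanishing of $H_0(T,\partial T;\Z)$ and $H_1(T,\partial T;\Z)$, which holds because $n\geq k+3$ forces $n-1>k$), the pair $(\wt T,\partial \wt T)$ of universal covers is simply connected, and the relative Hurewicz theorem then gives $\pi_i(T,\partial T)=\pi_i(\wt T,\partial \wt T)=0$ for $i\leq n-k-1$, as required.
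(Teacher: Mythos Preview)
Your homological alternative in the final paragraph is precisely the paper's proof: the paper uses the definition to get $\pi_1(\partial T)\cong\pi_1(T)$, applies Poincar\'e--Lefschetz duality with $\Z G$ coefficients to obtain $H_i(T,\partial T)\cong H^{n-i}(T)\cong H^{n-i}(K)=0$ for $i\le n-k-1$, and then (implicitly) invokes relative Hurewicz. Your version spells out the Hurewicz step and the connectedness of $\partial T$ more explicitly, but the argument is the same.

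Your primary route via handle decompositions is a genuinely different, more geometric argument, and it is sound in spirit: a handle decomposition of $T$ with handles of index $\le k$ dualises to one built on $\partial T\times I$ with handles of index $\ge n-k$, which immediately gives $\pi_i(T,\partial T)=0$ for $i\le n-k-1$. One caveat: to pass from ``$T$ is a thickening'' to ``$T$ admits such a handle decomposition'' you appeal to uniqueness via \cref{lem:thick-class}, but that lemma requires $n\ge\max(6,2k+1)$, strictly stronger than the hypothesis $n\ge\max(k+3,2k+1)$ of the present lemma (the cases $n=4,k=1$ and $n=5,k\le 2$ escape). So in those low dimensions your handle argument, as written, does not close, and you would need to fall back on the homological alternative anyway. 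The homological route has the advantage of working uniformly under the stated hypotheses with no appeal to handle theory or uniqueness of thickenings; the handle route is more vivid and would be the natural choice once $n\ge 6$.
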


\begin{proof}
By the definition of thickenings $\pi_1(\partial T) \rightarrow \pi_1(T)$ is an isomorphism. Let $G = \pi_1(T)$ and consider homology with $\Z G$ coefficients. We have $H_i(T, \partial T) \cong H^{n-i}(T) \cong H^{n-i}(K) = 0$ if $n-i>k$, equivalently, $i \leq n-k-1$. 
\end{proof}

\begin{lemma} \label{lem:fdt}
Suppose that $n \geq \max(6,2k+2)$ and $f_T \colon K \rightarrow T$ is an $n$-dimensional thickening. Then there is a map $f_{\partial T} \colon K \rightarrow \partial T$ such that $f_{\partial T} \simeq f_T$ as maps $K \rightarrow T$. Furthermore, $f_{\partial T}$ is unique up to homotopy.
\end{lemma}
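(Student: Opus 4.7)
The plan is to deduce both existence and uniqueness from the connectivity statement in \cref{lem:thick-conn} by a standard obstruction theory argument. Since $n \geq 2k+2 \geq 2k+1$ and $n \geq 6 \geq k+3$ (as $k \leq (n-2)/2$), \cref{lem:thick-conn} applies and tells us that the inclusion $\iota \colon \partial T \hookrightarrow T$ is $(n-k-1)$-connected, i.e.\ the pair $(T,\partial T)$ is $(n-k-1)$-connected. The key numerical observation is that the hypothesis $n \geq 2k+2$ gives $n-k-1 \geq k+1$, so the connectivity of the pair strictly exceeds the dimension of $K$.

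For existence, I would invoke the standard consequence of the compression theorem (cellular approximation applied to the pair $(T,\partial T)$): if $(X,A)$ is $m$-connected and $K$ is a CW complex with $\dim K \leq m$, then every map $K \to X$ is homotopic to one whose image lies in $A$. Applying this with $(X,A) = (T,\partial T)$, $m = n-k-1$, and $\dim K = k \leq n-k-1$, we obtain a map $f_{\partial T} \colon K \to \partial T$ with $\iota \circ f_{\partial T} \simeq f_T$ as maps $K \to T$.

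For uniqueness, suppose $g_0, g_1 \colon K \to \partial T$ are two such maps, so that $\iota \circ g_0 \simeq \iota \circ g_1$ via a homotopy $H \colon K \times I \to T$ with $H|_{K \times \partial I}$ already landing in $\partial T$. This is a map of pairs $(K \times I, K \times \partial I) \to (T, \partial T)$, and we want to deform it rel $K \times \partial I$ into $\partial T$. The relative version of the compression theorem says that this is possible whenever $\dim(K \times I) \leq n-k-1$; here $\dim(K \times I) = k+1 \leq n-k-1$, which is precisely the hypothesis $n \geq 2k+2$. The resulting deformation produces a homotopy $g_0 \simeq g_1$ in $\partial T$.

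No real obstacle is expected: both the existence and the uniqueness fall out of the same compression-style statement, and the dimensional hypothesis $n \geq 2k+2$ is exactly what is needed so that $\dim(K \times I) \leq n-k-1$. I would cite (or briefly recall) the compression/obstruction fact in the form given, e.g., in Hatcher, rather than redo the skeleton-by-skeleton obstruction calculation.
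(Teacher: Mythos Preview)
Your argument is correct. Both existence and uniqueness follow cleanly from the connectivity of $(T,\partial T)$ given by \cref{lem:thick-conn}, and your dimension count $n-k-1 \geq k+1$ is exactly what is needed.

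The paper argues uniqueness the same way you do, but obtains existence differently: it invokes Wall's structural result that an $n$-dimensional thickening $T$ with $n \geq 6$ is diffeomorphic to $V \times I$ for some $(n-1)$-dimensional thickening $f_V \colon K \to V$, so that $\partial T \approx V \cup_{\id_{\partial V}} V$ and $f_{\partial T}$ is just the composite $K \xrightarrow{f_V} V \hookrightarrow \partial T$. Your route is more elementary and entirely self-contained for this lemma. The trade-off is that the paper's proof yields extra structure---namely that $(\partial T, f_{\partial T})$ carries a trivial double structure---which is used later (see \cref{rem:dt-sp}), whereas your compression argument does not produce this by-product.
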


\begin{proof}
By \cite[{\S}5]{Wa66} there is an $(n-1)$-dimensional thickening $f_V \colon K \rightarrow V$ and a diffeomorphism $T \approx V \times I$ such that $f_T$ is homotopic to the composition $K \stackrel{f_V}{\longrightarrow} V \rightarrow V \times I \approx T$. Hence $\partial T \approx V \cup_{\id_{\partial V}} V$, and we can take $f_{\partial T}$ to be the composition $K \stackrel{f_V}{\longrightarrow} V \rightarrow V \cup_{\id_{\partial V}} V \approx \partial T$. Since the inclusion $\partial T \rightarrow T$ is at least $(k+1)$-connected, any two maps from $K$ to $\partial T$ that are homotopic as $K \rightarrow T$ maps are also homotopic as $K \rightarrow \partial T$ maps. Therefore $f_{\partial T}$ is unique up to homotopy. 
\end{proof}

\begin{lemma} \label{lem:compl-he} 
Suppose that $n \geq \max(6,2k+2)$. Let $M$ be a closed $n$-manifold, $f_T \colon K \rightarrow T$ a thickening, $\varphi \colon K \rightarrow M$ an $\left\lfloor \frac{n}{2} \right\rfloor$-connected map and $i \colon T \rightarrow M$ an embedding such that $i \circ f_T \simeq \varphi$. Let $C = M \setminus i(\interior T)$ denote the complement of $i(T)$, let $j \colon C \rightarrow M$ be the inclusion, and let $\varphi'$ denote the composition $K \stackrel{f_{\partial T}}{\longrightarrow} \partial T \stackrel{i}{\longrightarrow} i(\partial T) \rightarrow C$. 
\benum
\item\label{item:lem-compl-he-a} The map $\varphi' \colon K \rightarrow C$ is a homotopy equivalence.
\item\label{item:lem-compl-he-b} A map $f \colon K \rightarrow C$ is homotopic to $\varphi'$ if and only if $j \circ f \simeq \varphi \colon K \rightarrow M$.
\eenum
\end{lemma}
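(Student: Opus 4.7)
My strategy for \ref{item:lem-compl-he-a} is to invoke Whitehead's theorem: I will show that $\varphi'$ induces an isomorphism on $\pi_1$ and on $H_*(-;\Z G)$, where $G := \pi_1(M)$. For \ref{item:lem-compl-he-b}, I will show that the inclusion $j \colon C \to M$ is sufficiently highly connected that $j_* \colon [K, C] \to [K, M]$ is a bijection. Note that the definition of $\varphi'$, combined with $f_{\partial T} \simeq f_T$ in $T$ from \cref{lem:fdt}, immediately gives $j \circ \varphi' \simeq i \circ f_T \simeq \varphi$, which both settles the ``only if'' direction of \ref{item:lem-compl-he-b} and lets the ``if'' direction follow from the bijection. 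Throughout I will use the decomposition $M = i(T) \cup C$ with intersection $i(\partial T)$, and that by \cref{lem:thick-conn} the inclusion $\partial T \hookrightarrow T$ is $(n-k-1)$-connected, where $n - k - 1 \geq k + 1$ since $n \geq 2k+2$.

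For the $\pi_1$-step of \ref{item:lem-compl-he-a}, since $\varphi = i \circ f_T$ is $\lfloor n/2 \rfloor$-connected and $f_T$ is a homotopy equivalence, the inclusion $i$ induces an isomorphism $\pi_1(T) \cong \pi_1(M)$; combined with $\pi_1(\partial T) \cong \pi_1(T)$, van Kampen applied to $M = i(T) \cup C$ collapses to give $j_* \colon \pi_1(C) \to \pi_1(M)$ an isomorphism, hence so is $\varphi'_*$. For the homology step, I will first compute
\[
H_*(M, C; \Z G) \cong H_*(T, \partial T; \Z G) \cong H^{n-*}(K; \Z G^w)
\]
via excision, Poincar\'e--Lefschetz duality on $T$, and $f_T$ being a homotopy equivalence; this vanishes for $* < n - k$. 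From the long exact sequence of $(M, C)$, $j_*$ is an isomorphism on $H_i(-;\Z G)$ for $i \leq k$ (using $n - k - 1 \geq k + 1$); combined with $\varphi_* = j_* \circ \varphi'_*$ and the fact that $\varphi$ is a $\Z G$-homology isomorphism in this range, $\varphi'_*$ is an isomorphism on $H_i$ for $i \leq k$. For $i > k$ we have $H_i(K;\Z G) = 0$, so it remains to show $H_i(C;\Z G) = 0$. For this, Poincar\'e--Lefschetz duality on the manifold with boundary $C$ together with excision rewrites $H_i(C;\Z G) \cong H^{n-i}(M, T; \Z G^w)$; I then feed the long exact sequence of $(M, T)$, using $H^j(K;\Z G^w) = 0$ for $j > k$ and Poincar\'e duality on the closed manifold $M$ (which identifies $H^j(M;\Z G^w)$ with $H_{n-j}(M;\Z G)$, vanishing for $k < n - j \leq \lfloor n/2 \rfloor$ by the connectivity of $\varphi$ and $\dim K \leq k$) to obtain the needed vanishing in the range $n - i < n - k$. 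Whitehead's theorem then completes \ref{item:lem-compl-he-a}.

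For \ref{item:lem-compl-he-b}, the vanishing of $H_*(M, C;\Z G)$ below degree $n - k$ computed above, combined with the $\pi_1$-isomorphism $\pi_1(C) \cong \pi_1(M)$, yields by the relative Hurewicz theorem (applied on universal covers, using that $\pi_1(C) \cong \pi_1(M)$ makes the preimage of $C$ in $\wt M$ simply connected) that $(M, C)$ is $(n-k-1)$-connected. Since $n - k - 1 \geq k + 1$ and $\dim K \leq k$, standard obstruction theory implies $j_*\colon [K, C] \to [K, M]$ is a bijection, so $j \circ f \simeq \varphi \simeq j \circ \varphi'$ forces $f \simeq \varphi'$. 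The main obstacle I anticipate is the vanishing $H_i(C; \Z G) = 0$ for $i > k$ in \ref{item:lem-compl-he-a}: because $M$ itself has nontrivial top-dimensional homology, $C$ must be shown to ``shed'' this, which is a delicate duality computation requiring the simultaneous use of Poincar\'e duality on $C$ (as a manifold with boundary) and on $M$ (as a closed manifold), with particular care around the middle dimensions $n - i \approx n/2$ where the long exact sequence of $(M,T)$ is not immediately controlled by the connectivity of $\varphi$ alone.
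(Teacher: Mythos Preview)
Your overall strategy matches the paper's: verify Whitehead's hypotheses via van Kampen for $\pi_1$ and excision/duality for $\Z G$-homology, and part \ref{item:lem-compl-he-b} is correct. However, there is a gap in your argument that $H_i(C;\Z G)=0$ for $i>k$. After rewriting $H_i(C) \cong H^{n-i}(M,i(T))$, you feed the long exact sequence of $(M,T)$ with only the two inputs $H^j(T)=0$ for $j>k$ and $H^j(M)\cong H_{n-j}(M)=0$ for $\lceil n/2\rceil \le j < n-k$. But these control the sequence only when \emph{both} hold, i.e.\ for $j \ge \max(k+2,\lceil n/2\rceil)$; for $j \le \lfloor n/2 \rfloor$ (equivalently $i \ge \lceil n/2\rceil$) neither $H^{j-1}(T)$ nor $H^j(M)$ is forced to vanish, and the sequence yields nothing. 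For instance when $n=2k+2$ and $j=k+1$ you would need $H^k(M)\to H^k(T)$ surjective, which is not among your stated inputs. Your anticipated obstacle is thus misplaced: the LES works near $j\approx n/2$; it is for small $j$ that it fails.

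The fix is immediate and is what the paper does: use directly that $i\colon T \to M$ is $\lfloor n/2\rfloor$-connected (a fact you already extracted at the $\pi_1$ level) to conclude $H^{j}(M,i(T))=0$ for all $j\le \lfloor n/2\rfloor$ with any coefficients. This makes the detour through Poincar\'e duality on $M$ and the LES of $(M,T)$ unnecessary. More cleanly, the paper splits at $\lceil n/2\rceil$ rather than at $k$: your low-range argument via the LES of $(M,C)$ and the connectivity of $\varphi$ actually extends to all $r\le\lceil n/2\rceil-1$ (since $H_r(M,C)\cong H_r(T,\partial T)=0$ for $r\le n-k-1$ makes $j_*$ an isomorphism through $r\le n-k-2\ge\lceil n/2\rceil-1$), and for $r\ge\lceil n/2\rceil$ one has $n-r\le\lfloor n/2\rfloor$, so the $\lfloor n/2\rfloor$-connectivity of $T\hookrightarrow M$ gives $H^{n-r}(M,i(T))=0$ directly.
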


\begin{proof}
\eqref{item:lem-compl-he-a} Since $\varphi$ is $\left\lfloor \frac{n}{2} \right\rfloor$-connected and $f_T$ is a homotopy equivalence, $i$ is $\left\lfloor \frac{n}{2} \right\rfloor$-connected too. The inclusions $\partial T \rightarrow T$ and $i$ induce isomorphisms on $\pi_1$, so it follows from the Van Kampen theorem that the inclusions $i(\partial T) = \partial C \rightarrow C$ and $j$ also induce isomorphisms on $\pi_1$. Let $G = \pi_1(M)$, identify the fundamental groups of $T$, $\partial T$ and $C$ with $G$ via the inclusions, and consider their homology with $\Z G$ coefficients. 

By excision and \cref{lem:thick-conn} we have $H_r(M,C) \cong H_r(T, \partial T) = 0$ if $r \leq n-k-1$. Therefore $H_r(j) \colon H_r(C) \rightarrow H_r(M)$ is an isomorphism if $r \leq n-k-2$, in particular if $r \leq \left\lceil \frac{n}{2} \right\rceil-1$. The induced homomorphism $H_r(\varphi) \colon H_r(K) \rightarrow H_r(M)$ is an isomorphism for $r \leq \left\lfloor \frac{n}{2} \right\rfloor$, because $\varphi$ is $\left\lfloor \frac{n}{2} \right\rfloor$-connected and $K$ has dimension $k < \left\lfloor \frac{n}{2} \right\rfloor$. Since $\varphi \simeq i \circ f_T \simeq i \circ f_{\partial T} \simeq j \circ \varphi' \colon K \rightarrow M$, we get that $H_r(\varphi') \colon H_r(K) \rightarrow H_r(C)$ is an isomorphism if $r \leq \left\lceil \frac{n}{2} \right\rceil-1$. We also have that $H_r(C) \cong H^{n-r}(C, \partial C) \cong H^{n-r}(M, i(T)) = 0$ and $H_r(K)=0$ if $r \geq \left\lceil \frac{n}{2} \right\rceil$ (hence $n-r \leq \left\lfloor \frac{n}{2} \right\rfloor$). Therefore $\varphi'$ induces an isomorphism on $\pi_1$ and all homology groups, so it is a homotopy equivalence. 

\eqref{item:lem-compl-he-b} We already saw that $\varphi \simeq j \circ \varphi'$. This implies the only if direction. Also,  by part \eqref{item:lem-compl-he-a} this implies that $j$ is $\left\lfloor \frac{n}{2} \right\rfloor$-connected. Hence if $j \circ f \simeq \varphi \simeq j \circ \varphi'$ for some $f \colon K \rightarrow C$, then $f \simeq \varphi'$, because $K$ has dimension $k \leq \left\lfloor \frac{n}{2} \right\rfloor - 1$.
\end{proof}

The next lemma is implicit in the discussion on \cite[p.\ 77]{Wa66}.

\begin{lemma} \label{lem:glue-hcob}
Let $M$ be an $n$-manifold with boundary, and let $N \subset \interior M$ be a codimension $0$ submanifold. Then the following hold. 
\benum
\item\label{item:lem-glue-hcob-a} If $M \setminus \interior N$ is an $h$-cobordism between $\partial N$ and $\partial M$, then the inclusion $i \colon N \rightarrow M$ is a homotopy equivalence. 

\item\label{item:lem-glue-hcob-b} If the inclusions induce isomorphisms $\pi_1(\partial N) \cong \pi_1(N)$ and $\pi_1(\partial M) \cong \pi_1(M)$, and $i \colon N \rightarrow M$ is a homotopy equivalence, then $M \setminus \interior N$ is an $h$-cobordism. 

\item\label{item:lem-glue-hcob-c} If the assumptions in \eqref{item:lem-glue-hcob-b} hold, then $\tau(i) = j_*(\tau(M \setminus \interior N, \partial N)) \in \Wh(\pi_1(M))$, where $j \colon M \setminus \interior N \rightarrow M$ is the inclusion. 
\eenum
\end{lemma}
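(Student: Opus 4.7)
The plan is to exploit the pushout decomposition $M = N \cup_{\partial N} W$, where $W := M \setminus \interior N$. Since $\partial N$ is a collared submanifold of both $N$ and $W$, all three inclusions of $\partial N$ are cofibrations, and the same holds for $\partial M \hookrightarrow W$.

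For part \eqref{item:lem-glue-hcob-a}, the assumption that $W$ is an $h$-cobordism gives that $\partial N \hookrightarrow W$ is a homotopy equivalence. Pushing out a homotopy equivalence along a cofibration yields a homotopy equivalence, so $i\colon N \hookrightarrow N \cup_{\partial N} W = M$ is a homotopy equivalence.

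For part \eqref{item:lem-glue-hcob-b}, I would first pin down the fundamental groups. Van Kampen applied to $M = N \cup W$, together with the hypothesis $\pi_1(\partial N) \cong \pi_1(N)$, shows that both $\pi_1(\partial N) \to \pi_1(W)$ and $\pi_1(W) \to \pi_1(M)$ are isomorphisms; combined with $\pi_1(\partial M) \cong \pi_1(M)$, the inclusion $\pi_1(\partial M) \to \pi_1(W)$ is also an isomorphism. Setting $G := \pi_1(M)$ and working with $\Z G$ coefficients, the hypothesis that $i$ is a homotopy equivalence gives $H_*(M, N) = 0$, hence by excision $H_*(W, \partial N) = 0$. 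Combined with the $\pi_1$ isomorphism, the Hurewicz and Whitehead theorems (applied on the universal cover) imply that $\partial N \hookrightarrow W$ is a homotopy equivalence. Poincar\'e--Lefschetz duality on the cobordism $W$ then yields $H_*(W, \partial M) \cong H^{n-*}(W, \partial N) = 0$, so the same reasoning makes $\partial M \hookrightarrow W$ a homotopy equivalence. Therefore $W$ is an $h$-cobordism.

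For part \eqref{item:lem-glue-hcob-c}, I would invoke additivity of Whitehead torsion under pushouts along cofibrations: writing $i \colon N \to M$ as the pushout of the homotopy equivalence $\partial N \hookrightarrow W$ along the cofibration $\partial N \hookrightarrow N$, one obtains $\tau(i) = j_*(\tau(W, \partial N))$. Equivalently, this follows from applying \cref{lem:WT-ch-add} to the short exact sequence of based chain complexes $0 \to C_*(N;\Z G) \to C_*(M;\Z G) \to C_*(M, N;\Z G) \to 0$, together with the excision isomorphism $C_*(M, N;\Z G) \cong C_*(W, \partial N;\Z G)$ coming from a relative CW structure on $(M, N)$ with cells corresponding to those of $(W, \partial N)$. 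The only non-routine step is arranging the hypotheses of the Whitehead theorem and of Poincar\'e--Lefschetz duality in part \eqref{item:lem-glue-hcob-b}; the $\pi_1$ assumptions are designed precisely to enable these standard tools.
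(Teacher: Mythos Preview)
Your argument is correct and follows essentially the same route as the paper. For \eqref{item:lem-glue-hcob-a} you use the gluing lemma (pushing out a homotopy equivalence along a cofibration) where the paper cites homotopy excision; for \eqref{item:lem-glue-hcob-b} the arguments are identical; for \eqref{item:lem-glue-hcob-c} the paper implements the pushout additivity you invoke by applying \cref{lem:WT-ch-add} to the morphism of short exact sequences
\[
\xymatrix{
0 \ar[r] & C_*(\partial N) \ar[r] \ar[d] & C_*(N) \ar[r] \ar[d]_-{i_*} & C_*(N,\partial N) \ar[r] \ar[d] & 0 \\
0 \ar[r] & C_*(W) \ar[r] & C_*(M) \ar[r] & C_*(M,W) \ar[r] & 0,
}
\]
whose right vertical map is the basis-preserving excision isomorphism. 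Your ``equivalently'' sketch is slightly imprecise as stated, since \cref{lem:WT-ch-add} needs a \emph{morphism} of short exact sequences rather than a single one; the cleanest fix is exactly the diagram above (or, alternatively, compare your sequence to $0 \to C_*(N) \to C_*(N) \to 0 \to 0$ and use that the torsion of the acyclic complex $C_*(M,N)$ equals $j_*(\tau(W,\partial N))$ via excision).
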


\begin{proof}
\eqref{item:lem-glue-hcob-a} 
Since the inclusion $\partial N \rightarrow M \setminus \interior N$ is a homotopy equivalence, by applying homotopy excision \cite[Theorem 4.23]{Ha02} to the map $(M \setminus \interior N, \partial N) \to (M,N)$ we deduce that $i$ is a homotopy equivalence too.

\eqref{item:lem-glue-hcob-b} Let $G = \pi_1(M)$, it follows from the Van Kampen theorem and the assumptions that the inclusions identify $\pi_1(N)$, $\pi_1(\partial N)$, $\pi_1(M \setminus \interior N)$ and $\pi_1(\partial M)$ with $G$, in particular $\pi_1(\partial N) \cong \pi_1(M \setminus \interior N) \cong \pi_1(\partial M)$. Consider homology with $\Z G$ coefficients. Since $i$ is a homotopy equivalence, we have $H_*(M \setminus \interior N, \partial N) \cong H_*(M,N) = 0$ by excision. By Poincar\'e duality we also have $H_*(M \setminus \interior N, \partial M) = 0$, therefore the inclusions $\partial N \rightarrow M \setminus \interior N$ and $\partial M \rightarrow M \setminus \interior N$ are both homotopy equivalences.

\eqref{item:lem-glue-hcob-c} Fix a CW structure on $\partial N$ and extend it to a CW structure on $M$ (so that $\partial N$ is a subcomplex of $N$ and $M \setminus \interior N$). Consider cellular chain complexes with $\Z G$ coefficients. We have a commutative diagram 
\[
\xymatrix{
0 \ar[r] & C_*(\partial N) \ar[r] \ar[d] & C_*(N) \ar[r] \ar[d]_-{i_*} & C_*(N, \partial N) \ar[r] \ar[d] & 0 \\
0 \ar[r] & C_*(M \setminus \interior N) \ar[r] & C_*(M) \ar[r] & C_*(M,M \setminus \interior N) \ar[r] & 0
}
\]
where the rows are short exact sequences of chain complexes. The left and middle vertical maps are chain homotopy equivalences by our assumptions and part \eqref{item:lem-glue-hcob-b}. The vertical map on the right is a chain homotopy equivalences with vanishing Whitehead torsion, because the cells in $N \setminus \partial N$ determine the standard basis in both $C_*(N, \partial N)$ and $C_*(M,M \setminus \interior N)$. So by \cref{lem:WT-ch-add} we have $\tau(i_*) = \tau(C_*(\partial N) \rightarrow C_*(M \setminus \interior N)) \in \Wh(G)$. 
\end{proof}

\section{Polarised doubles} \label{s:gd}

Fix positive integers $n, k$ with $n \geq 2k+1$, and let $M$ be a closed $n$-manifold, $K$ a finite CW complex of dimension (at most) $k$ and let $\varphi \colon K \rightarrow M$ be a continuous map. We will think of (the homotopy class of) $\varphi$ as extra structure on $M$ and call the pair $(M,\varphi)$ a polarised manifold.

\subsection{Double structures on polarised manifolds}

We start by defining the different types of double structures that a polarised manifold may have.

\begin{definition} \label{def:doubles}
Let $(M,\varphi)$ be a polarised manifold. 
\begin{compactitem}[$\bullet$]
\item A \emph{generalised double structure} on $(M,\varphi)$ is a diffeomorphism $h \colon T \cup_{g_0} W \cup_{g_1} T \rightarrow M$ such that $\varphi \simeq h \circ i_1 \circ f_T$, where $f_T \colon K \rightarrow T$ is a thickening, $W$ is an $h$-cobordism with $\partial W = \partial_0 W \sqcup \partial_1 W$, $g_0 \colon \partial_0 W \rightarrow \partial T$ and $g_1 \colon \partial T \rightarrow \partial_1 W$ are diffeomorphisms and $i_1 \colon T \rightarrow T \cup_{g_0} W \cup_{g_1} T$ is the inclusion of the first component.
\item A \emph{twisted double structure} on $(M,\varphi)$ is a diffeomorphism $h \colon T \cup_g T \rightarrow M$ such that $\varphi \simeq h \circ i_1 \circ f_T$, where $f_T \colon K \rightarrow T$ is a thickening, $g \colon \partial T \rightarrow \partial T$ is a diffeomorphism, and $i_1 \colon T \rightarrow T \cup_g T$ is the inclusion of the first component.
\item A \emph{trivial double structure} on $(M,\varphi)$ is a twisted double structure with $g = \id_{\partial T}$.
\end{compactitem}
\end{definition}

Of course, $(M,\varphi)$ has a twisted double structure if and only if it has a generalised double structure with $W \approx \partial T \times I$. If the manifold $M$ is of the form $T \cup_{g_0} W \cup_{g_1} T$, $T \cup_g T$, or $T \cup_{\id_{\partial T}} T$, then $\id_M$ is a generalised/twisted/trivial double structure on $(M,i_1 \circ f_T)$. 

\begin{proposition} \label{prop:double-conn}
Suppose that $n \geq \max(6,2k+2)$. Then $(M,\varphi)$ has a generalised double structure if and only if $\varphi$ is $\left\lfloor \frac{n}{2} \right\rfloor$-connected.
\end{proposition}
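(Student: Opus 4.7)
The plan is to prove the two directions separately, using Wall's thickening theory (Lemmas~\ref{lem:thick-class}, \ref{lem:thick-emb}, \ref{lem:thick-conn}, \ref{lem:compl-he}) together with the $h$-cobordism recognition result (Lemma~\ref{lem:glue-hcob}).

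For the forward implication, suppose we have a generalised double structure $h \colon T \cup_{g_0} W \cup_{g_1} T_2 \to M$ with $\varphi \simeq i_1 \circ f_T$ (where I write $T_2$ for the second copy of $T$ for clarity). Since $f_T$ is a homotopy equivalence, it suffices to show that $i_1 \colon T \to M$ is $\lfloor n/2 \rfloor$-connected. The $\pi_1$ part is immediate from the Van Kampen theorem combined with the thickening condition on $T$ and $T_2$ and the $h$-cobordism condition on $W$. For higher degrees, writing $C := W \cup_{g_1} T_2$ and working with $\Z[\pi_1 M]$ coefficients, excision gives $H_i(M,T) \cong H_i(C, \partial T)$. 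The long exact sequence of the triple $(\partial_0 W, W, C)$ together with $H_*(W,\partial_0 W)=0$ (from the $h$-cobordism property) yields $H_i(C, \partial T) \cong H_i(T_2, \partial T_2)$, which by Poincar\'e–Lefschetz duality is isomorphic to $H^{n-i}(T_2) \cong H^{n-i}(K)$ and hence vanishes for $i \leq n-k-1$. The hypothesis $n \geq 2k+2$ ensures $n-k-1 \geq \lfloor n/2 \rfloor$, so $(M,T)$ is $\lfloor n/2 \rfloor$-connected.

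For the converse, assume $\varphi$ is $\lfloor n/2 \rfloor$-connected. Lemma~\ref{lem:thick-emb} produces a thickening $f_T \colon K \to T$ and an embedding $i \colon T \hookrightarrow \interior M$ with $i \circ f_T \simeq \varphi$. Set $C := M \setminus i(\interior T)$; by Lemma~\ref{lem:compl-he}, the map $\varphi' \colon K \to C$ is a homotopy equivalence and $j \circ \varphi' \simeq \varphi$ where $j \colon C \to M$ is the inclusion. Apply Lemma~\ref{lem:thick-emb} a second time, to the map $\varphi' \colon K \to C$, producing a thickening $f_{T'} \colon K \to T'$ together with an embedding $i' \colon T' \hookrightarrow \interior C$ satisfying $i' \circ f_{T'} \simeq \varphi'$. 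Then $i' \colon T' \to C$ is itself a homotopy equivalence, and the $\pi_1$-hypotheses of Lemma~\ref{lem:glue-hcob}\eqref{item:lem-glue-hcob-b} hold (for $\partial T' \to T'$ by the thickening property, and for $\partial C \cong i(\partial T) \to C$ through the chain of identifications with $\pi_1(M)$). Hence $W := C \setminus i'(\interior T')$ is an $h$-cobordism, and $M = T \cup_{g_0} W \cup_{g_1} T'$.

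It remains to see that $T'$ is equivalent to $T$ as a thickening, so that the decomposition above conforms to Definition~\ref{def:doubles}. By Lemma~\ref{lem:thick-class}, it suffices to prove $f_T^*(\nu_T) \cong f_{T'}^*(\nu_{T'})$. Since $T, T' \subset M$ are codimension-zero submanifolds, both stable normal bundles are restrictions of $\nu_M$, giving
\[
f_T^*(\nu_T) \cong (i \circ f_T)^*(\nu_M) \cong \varphi^*(\nu_M) \cong (j \circ i' \circ f_{T'})^*(\nu_M) \cong f_{T'}^*(\nu_{T'}),
\]
where the middle isomorphism uses Lemma~\ref{lem:compl-he}\eqref{item:lem-compl-he-b}. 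Pick a diffeomorphism $H \colon T \to T'$ realising this equivalence, absorb $H$ into the attaching map $g_1$, and we obtain a genuine generalised double structure. The delicate point I expect to require care is the last step: one must check that rebasing the second copy of $T'$ to $T$ (via $H$) does not disturb the condition $\varphi \simeq i_1 \circ f_T$, which holds because the first factor and the embedding $i$ are untouched throughout and only the boundary identification on the far end is adjusted.
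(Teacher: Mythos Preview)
Your proof is correct and follows essentially the same strategy as the paper. The converse direction is virtually identical (the paper cites the uniqueness clause of Lemma~\ref{lem:thick-emb} rather than recomputing normal bundles, but that clause is proved exactly by your bundle comparison). For the forward direction the paper argues slightly more directly via homotopy excision: it uses Lemma~\ref{lem:glue-hcob}\eqref{item:lem-glue-hcob-a} to see that $T \hookrightarrow T \cup W$ is a homotopy equivalence, then applies homotopy excision to transport the $(n-k-1)$-connectedness of $(T,\partial T)$ from Lemma~\ref{lem:thick-conn} to $(T \cup W \cup T,\, T \cup W)$; your homological computation with $\Z[\pi_1 M]$ coefficients plus relative Hurewicz accomplishes the same thing.
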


\begin{proof}
First assume that a generalised double structure $h$ exists as in \cref{def:doubles}. By \cref{lem:glue-hcob}~\eqref{item:lem-glue-hcob-a}, the inclusion $T \rightarrow T \cup W$ is a homotopy equivalence. By \cref{lem:thick-conn} the pair $(T, \partial T)$ is $(n-k-1)$-connected, and by homotopy excision 
the same holds for the pair $(T \cup W \cup T, T \cup W)$. Therefore $i_1$ is $(n-k-1)$-connected too. Since $\varphi \simeq h \circ i_1 \circ f_T$ and $h$ and $f_T$ are homotopy equivalences, $\varphi$ is also $(n-k-1)$-connected, in particular it is $\left\lfloor \frac{n}{2} \right\rfloor$-connected. 

Now assume that $\varphi$ is $\left\lfloor \frac{n}{2} \right\rfloor$-connected. By \cref{lem:thick-emb} there is a thickening $f_T \colon K \rightarrow T$ of $K$ and an embedding $i \colon T \rightarrow M$ such that $i \circ f_T \simeq \varphi$. Let $C = M \setminus i(\interior T)$ denote the complement of $i(T)$. By \cref{lem:compl-he} the composition $K \stackrel{f_{\partial T}}{\longrightarrow} \partial T \stackrel{i}{\longrightarrow} i(\partial T) \rightarrow C$, denoted by $\varphi'$, is a homotopy equivalence. Applying \cref{lem:thick-emb} again, there is a thickening $f_{T'} \colon K \rightarrow T'$ of $K$ and an embedding $i' \colon T' \rightarrow \interior C$ such that $i' \circ f_{T'} \simeq \varphi'$. Moreover, since $\varphi$ and $\varphi'$ are homotopic as $K \rightarrow M$ maps, $f_T$ and $f_{T'}$ are equivalent thickenings, i.e.\ there is a diffeomorphism $H \colon T \rightarrow T'$ such that $H \circ f_T \simeq f_{T'}$. Let $W = C \setminus i'(\interior T')$. The embedding $i' \colon T' \rightarrow C$ is a homotopy equivalence, because $\varphi' \colon K \rightarrow C$ and $f_{T'}$ are both homotopy equivalences and $i' \circ f_{T'} \simeq \varphi'$. So by \cref{lem:glue-hcob}~\eqref{item:lem-glue-hcob-b} $W$ is an $h$-cobordism. Therefore the decomposition $M = i(T) \cup W \cup (i' \circ H)(T)$ determines a generalised double structure on $(M,\varphi)$. 
\end{proof}

\begin{corollary} \label{cor:gds-he}
Suppose that $n \geq \max(6,2k+2)$, and let $N$ be an $n$-manifold. If $(M,\varphi)$ has a generalised double structure and $M \simeq N$, then $(N,\psi)$ also has a generalised double structure for some $\psi \colon K \rightarrow N$.
\end{corollary}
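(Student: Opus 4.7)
The proof will be a direct application of the characterisation of generalised double structures given by \cref{prop:double-conn-intro} (\cref{prop:double-conn} in the body).

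The plan is as follows. Since $(M,\varphi)$ has a generalised double structure, \cref{prop:double-conn} tells us that $\varphi \colon K \to M$ is $\lfloor n/2 \rfloor$-connected. Choose a homotopy equivalence $f \colon M \to N$ and define
\[
\psi \coloneqq f \circ \varphi \colon K \to N.
\]
The composition of an $\lfloor n/2 \rfloor$-connected map with a homotopy equivalence is still $\lfloor n/2 \rfloor$-connected (for instance, via the long exact sequence of homotopy groups, using that $\pi_i(f)$ is an isomorphism for all $i$). Hence $\psi$ is $\lfloor n/2 \rfloor$-connected, and a second application of \cref{prop:double-conn} — this time the ``if'' direction — yields a generalised double structure on $(N,\psi)$.

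There is essentially no obstacle here; the content is entirely absorbed into \cref{prop:double-conn}, and the corollary records the class-closure property of generalised doubles that was advertised in the introduction after \cref{prop:double-conn-intro}. The one point worth noting is that the same argument fails for trivial and twisted double structures, because those are not characterised by a homotopy-invariant condition on $\varphi$; it is precisely the fact that being a generalised double is detected by the connectivity of the polarisation that makes this closure property automatic.
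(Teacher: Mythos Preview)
Your proof is correct and follows exactly the same approach as the paper: define $\psi = h \circ \varphi$ for a homotopy equivalence $h \colon M \to N$, observe that $\psi$ is $\lfloor n/2 \rfloor$-connected since $\varphi$ is (by \cref{prop:double-conn}) and $h$ is a homotopy equivalence, and apply \cref{prop:double-conn} again.
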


\begin{proof}
If $h \colon M \rightarrow N$ is a homotopy equivalence, then $\psi  \coloneqq  h \circ \varphi$ is $\left\lfloor \frac{n}{2} \right\rfloor$-connected.
\end{proof}

\subsection{The Whitehead torsion of a polarised double}

Next we define and study the invariant $\tau(M,\varphi)$, the torsion associated to  a polarised double. 

\begin{definition} \label{def:tau-pol}
Suppose that $n \geq \max(6,2k+2)$ and $\varphi$ is $\left\lfloor \frac{n}{2} \right\rfloor$-connected. The \emph{Whitehead torsion associated to $(M,\varphi)$}, denoted by $\tau(M,\varphi)$, is defined as follows. Let $f_T \colon K \rightarrow T$ be a thickening and let $i \colon T \rightarrow M$ be an embedding such that $i \circ f_T \simeq \varphi$. Let $C = M \setminus i(\interior T)$ denote the complement of $i(T)$ with inclusion $j \colon C \rightarrow M$, and let $\varphi'$ be the composition 
\[\varphi' \colon K \stackrel{f_{\partial T}}{\longrightarrow} \partial T \stackrel{i}{\longrightarrow} i(\partial T) \rightarrow C.\] Then 
\[
\tau(M,\varphi) = j_*(\tau(\varphi')) \in \Wh(\pi_1(M)) \text{.}
\] 
\end{definition}

\begin{proposition}
The Whitehead torsion  $\tau(M,\varphi)$ is well-defined. 
\end{proposition}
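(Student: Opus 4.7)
The plan is to show the element $\tau(M,\varphi) = j_*(\tau(\varphi'))$ does not depend on the auxiliary choices in \cref{def:tau-pol}. The map $f_{\partial T}$ is unique up to homotopy by \cref{lem:fdt}, and once $(f_T, i)$ is fixed, the map $\varphi' \colon K \to C$ is characterised up to homotopy by the condition $j \circ \varphi' \simeq \varphi$ via \cref{lem:compl-he}~\eqref{item:lem-compl-he-b}; hence $\tau(\varphi') \in \Wh(\pi_1(C))$, and thus $j_*(\tau(\varphi')) \in \Wh(\pi_1(M))$, depends only on the pair $(f_T, i)$. The remaining task is to compare two such pairs.

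First I would reduce to a common thickening. Given two choices $(f_{T_0}, i_0)$ and $(f_{T_1}, i_1)$, \cref{lem:thick-emb} produces a diffeomorphism $H \colon T_0 \to T_1$ with $H \circ f_{T_0} \simeq f_{T_1}$. Replacing $i_1$ by $i_1 \circ H$, which still satisfies $(i_1 \circ H) \circ f_{T_0} \simeq \varphi$, I may assume $T_0 = T_1 = T$ and $f_{T_0} = f_{T_1} = f_T$. The problem becomes: given two embeddings $i_0, i_1 \colon T \to M$ with $i_0 \circ f_T \simeq i_1 \circ f_T \simeq \varphi$, show that $j_{0 *}(\tau(\varphi_0')) = j_{1 *}(\tau(\varphi_1'))$.

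The key step, and the main obstacle, is to produce a diffeomorphism $\Phi \colon M \to M$ isotopic to $\id_M$ with $\Phi \circ i_0 = i_1$. Since $n \geq 2k+2$ we are in the stable range, and this should follow from Wall's uniqueness of embeddings of thickenings \cite[\S5]{Wa66} combined with the isotopy extension theorem, applied to the two embeddings of $T$ that agree up to homotopy after restriction along $f_T$. Given such a $\Phi$, it restricts to a diffeomorphism $\Phi|_{C_0} \colon C_0 \to C_1$ satisfying $j_1 \circ \Phi|_{C_0} = \Phi \circ j_0$, and the induced map $\Phi_*$ is the identity on $\pi_1(M)$ and on $\Wh(\pi_1(M))$ because $\Phi \simeq \id_M$.

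To conclude, I would check that $\varphi_1' \simeq \Phi|_{C_0} \circ \varphi_0'$ using \cref{lem:compl-he}~\eqref{item:lem-compl-he-b}: the composition $j_1 \circ \Phi|_{C_0} \circ \varphi_0' = \Phi \circ j_0 \circ \varphi_0'$ is homotopic to $\Phi \circ \varphi \simeq \varphi$. Then \cref{prop:WT-composition} together with Chapman's theorem (\cref{thm:chapman}), which gives $\tau(\Phi|_{C_0}) = 0$, yields $\tau(\varphi_1') = (\Phi|_{C_0})_*(\tau(\varphi_0'))$. Pushing forward by $j_1$, using $j_1 \circ \Phi|_{C_0} = \Phi \circ j_0$ and $\Phi_* = \id$ on $\Wh(\pi_1(M))$, produces
\[
j_{1 *}(\tau(\varphi_1')) = \Phi_* \, j_{0 *}(\tau(\varphi_0')) = j_{0 *}(\tau(\varphi_0')),
\]
which is the required well-definedness in $\Wh(\pi_1(M))$.
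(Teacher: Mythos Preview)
Your argument has a genuine gap at the step you yourself flag as ``the main obstacle'': the existence of a diffeomorphism $\Phi \colon M \to M$ isotopic to $\id_M$ with $\Phi \circ i_0 = i_1$. This asserts that two homotopic codimension-$0$ embeddings of the $n$-dimensional thickening $T$ into $M$ are ambiently isotopic. Wall's \S5 does not give this: that section classifies abstract thickenings of $K$ via their stable normal bundle (as in \cref{lem:thick-class}), not embeddings of a fixed $T$ into $M$ up to isotopy. Whitney-type isotopy results apply to embeddings of the $k$-complex $K$ into $M$ (since $n \ge 2k+2$), not to codimension-$0$ embeddings of $T$. Your phrase ``should follow'' is an honest signal that this step is not actually established.

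The paper's proof avoids this issue by a two-stage argument. First, it applies Whitney's theorem to the embeddings $i_0 \circ f_T$ and $i_1 \circ f_T$ of $K$ (not of $T$) to obtain an ambient diffeomorphism $H \simeq \id_M$ with $i_0 \circ f_T = H \circ i_1 \circ f_T$; setting $i_2 = H \circ i_1$, one gets $(j_2)_*(\tau(\varphi_2')) = (j_1)_*(\tau(\varphi_1'))$ exactly as in your computation. At this point $i_0$ and $i_2$ agree on the core $f_T(K)$ but $i_0(T)$ and $i_2(T)$ may still differ. Second, the paper inserts a third embedding $i_3 \colon T \to i_0(T) \cap i_2(T)$ (via \cref{lem:thick-emb}) and uses \cref{lem:glue-hcob} to show that $i_0(T) \setminus i_3(\interior T)$ is an $s$-cobordism; hence the inclusion $C_0 \hookrightarrow C_3$ is simple, and \cref{lem:compl-he}~\eqref{item:lem-compl-he-b} plus \cref{prop:WT-composition} give $(j_0)_*(\tau(\varphi_0')) = (j_3)_*(\tau(\varphi_3'))$. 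The same works for $i_2$, completing the chain of equalities. This common-refinement trick replaces your unproven isotopy of thickenings with an $s$-cobordism comparison, at the cost of one extra embedding.
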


\begin{proof}
First we check that the assumptions made in \cref{def:tau-pol} are satisfied. A suitable thickening $f_T$ and embedding $i$ exist by \cref{lem:thick-emb}. The composition $\varphi' \colon K \rightarrow C$ is a homotopy equivalence by \cref{lem:compl-he}, so $\tau(\varphi') \in \Wh(\pi_1(C))$ is defined. Since $j \circ \varphi' \simeq \varphi$ and $\pi_1(\varphi)$ and $\pi_1(\varphi')$ are isomorphisms, $j_* \colon \Wh(\pi_1(C)) \rightarrow \Wh(\pi_1(M))$ is an isomorphism too. 

Next we check that $\tau(M,\varphi)$ is independent of choices. By \cref{lem:thick-emb}, the thickening $f_T$ is well-defined (up to equivalence). Suppose that $i_0 \colon T \rightarrow M$ and $i_1 \colon T \rightarrow M$ are two embeddings with $i_0 \circ f_T \simeq i_1 \circ f_T \simeq \varphi$ and corresponding $C_0,j_0,\varphi'_0$ and $C_1,j_1,\varphi'_1$. We will prove that 
\[
(j_0)_*(\tau(\varphi'_0)) = (j_1)_*(\tau(\varphi'_1)).
\] 

Since $n \geq 2k+2$, we can assume that $f_T$ is an embedding, and since the embeddings $i_0 \circ f_T$ and $i_1 \circ f_T \colon K \rightarrow M$ are homotopic, they are isotopic~\cite[Theorem~6]{Whitney-36}. 

By the isotopy extension theorem there is a diffeomorphism $H \colon M\rightarrow M$, isotopic to $\id_M$, such that $i_0 \circ f_T = H \circ i_1 \circ f_T$. Let $i_2 = H \circ i_1$, $C_2 = H(C_1) = M \setminus i_2(\interior T)$, $j_2 = H \circ j_1 \circ H^{-1} \colon C_2 \rightarrow M$ and $\varphi'_2 = H \circ \varphi'_1 \colon K \rightarrow C_2$. 

Since $H$ is a diffeomorphism, $\tau(\varphi'_2) = H_*(\tau(\varphi'_1)) \in \Wh(\pi_1(C_2))$ by \cref{prop:WT-composition} and \cref{thm:chapman}, and hence $(j_2)_*(\tau(\varphi'_2)) = H_* \circ (j_1)_*(\tau(\varphi'_1))$. We have $H_* = \id \colon \Wh(\pi_1(M)) \rightarrow \Wh(\pi_1(M))$, because $H \simeq \id_M$, therefore $(j_2)_*(\tau(\varphi'_2)) = (j_1)_*(\tau(\varphi'_1))$. So it is enough to prove that 
\[
(j_0)_*(\tau(\varphi'_0)) = (j_2)_*(\tau(\varphi'_2)).
\]

The image of $i_0 \circ f_T = i_2 \circ f_T \colon K \to M$ is contained in $i_0(T) \cap i_2(T)$, so we can apply \cref{lem:thick-emb} to get an embedding $i_3 \colon T \rightarrow i_0(T) \cap i_2(T)$ such that $i_3 \circ f_T \simeq i_0 \circ f_T$, and define the corresponding $C_3 := M \setminus i_3(\interior T)$, with inclusion $j_3 \colon C_3 \to M$, and $\varphi'_3 \colon K \to C_3$.  Since $i_3(T) \subseteq i_0(T)$, we have $C_3 \supseteq C_0$.  Let $h \colon i_3(T) \rightarrow i_0(T)$ and $h' \colon C_0 \rightarrow C_3$ denote the inclusions (hence $j_0 = j_3 \circ h'$).

By construction, $h \circ i_3 \circ f_T \simeq i_0 \circ f_T \colon K \rightarrow i_0(T)$. Since $f_T$ and the diffeomorphisms $i_0 \colon T \rightarrow i_0(T)$ and $i_3 \colon T \rightarrow i_3(T)$ are simple homotopy equivalences, $h$ is a simple homotopy equivalence too. By \cref{lem:glue-hcob}~\eqref{item:lem-glue-hcob-b} and~\eqref{item:lem-glue-hcob-c}, we see that~$i_0(T) \setminus i_3(\interior T)$ is an $s$-cobordism, and this in turn implies that $h'$ is a simple homotopy equivalence. We have $j_3 \circ h' \circ \varphi'_0 = j_0 \circ \varphi'_0 \simeq \varphi$, so by \cref{lem:compl-he}~\eqref{item:lem-compl-he-b}, we have that $h' \circ \varphi'_0 \simeq \varphi'_3 \colon K \rightarrow C_3$. Use this, \cref{prop:WT-composition}, and the fact that $\tau(h')=0$, to deduce that 
\[
(j_3)_*(\tau(\varphi'_3)) = (j_3)_*(\tau(h' \circ \varphi'_0)) = (j_3)_*(h'_*(\tau(\varphi'_0))) = (j_0)_*(\tau(\varphi'_0)).
\]

Since also $i_3(T) \subseteq i_2(T)$, we can prove similarly that $(j_3)_*(\tau(\varphi'_3)) = (j_2)_*(\tau(\varphi'_2))$. Therefore $(j_0)_*(\tau(\varphi'_0)) = (j_2)_*(\tau(\varphi'_2))$ as required.
\end{proof}

Note that $\tau(M,\varphi)$ is defined precisely when $(M,\varphi)$ admits a (generalised) double structure, but it is determined by the polarised manifold $(M,\varphi)$ alone, and is independent of a choice of a double structure. However, when a double structure is present, it can be used to compute $\tau(M,\varphi)$, as we explain below. More generally, we will investigate the relationship between $\tau(M,\varphi)$ and the possible double structures on $(M,\varphi)$. 

\begin{proposition} \label{prop:tau-double-comp}
Suppose that $n \geq \max(6,2k+2)$ and $(M,\varphi)$ has a generalised double structure $h \colon T \cup_{g_0} W \cup_{g_1} T \rightarrow M$, and let $i_1$, $i_2$, and $i_3$ denote the inclusions of the three components of $T \cup_{g_0} W \cup_{g_1} T$. 
\benum
\item\label{item:prop:tau-double-comp-a} There is a homotopy automorphism $\alpha \colon K \rightarrow K$ such that $\varphi \circ \alpha \simeq h \circ i_3 \circ f_T \colon K \rightarrow M$, and such an $\alpha$ is unique up to homotopy.

\item\label{item:prop:tau-double-comp-b} This $\alpha$ satisfies $\alpha^*(\varphi^*(\nu_M)) \cong \varphi^*(\nu_M)$.

\item\label{item:prop:tau-double-comp-c} For this $\alpha$, we have $\tau(M,\varphi) = (h \circ i_2)_*(\tau(W,\partial_1 W)) - \varphi_*(\tau(\alpha))$.
\eenum
\end{proposition}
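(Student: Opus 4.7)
The plan is to treat the three parts in order. Part (a) uses a connectivity/obstruction argument exploiting the symmetric nature of the generalised double structure; part (b) follows immediately from the bundle-theoretic classification of thickenings; and part (c) is a Whitehead-torsion calculation in the complement of the first copy of $T$, whose main work is identifying two natural homotopy equivalences $K \to C$.

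For part (a), the map $h \circ i_3 \circ f_T \co K \to M$ is $\lfloor n/2 \rfloor$-connected by the same argument used to prove \cref{prop:double-conn}, applied with the roles of the two thickenings exchanged. Since $\varphi$ is itself $\lfloor n/2 \rfloor$-connected and $\dim K \le \lfloor n/2 \rfloor - 1$ (using $n \ge 2k+2$), the map $\varphi_* \co [K,K] \to [K,M]$ is a bijection by obstruction theory (replace $\varphi$ by a cofibration and apply cellular approximation). This produces a unique homotopy class $\alpha$ with $\varphi \circ \alpha \simeq h \circ i_3 \circ f_T$; comparing $\Z\pi_1$-homology in degrees $\le k$ and applying Whitehead's theorem then shows $\alpha$ is a homotopy equivalence. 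For part (b), since $h$ is a diffeomorphism and $i_j$ is a codimension-zero embedding for $j=1,3$, we have $(h \circ i_j)^* \nu_M \cong \nu_T$; hence $\alpha^* \varphi^* \nu_M = (\varphi \circ \alpha)^* \nu_M = f_T^* \nu_T = \varphi^* \nu_M$.

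For part (c), take $i := h \circ i_1$ in \cref{def:tau-pol}. Then $C := M \sm i(\interior T)$ is identified via $h$ with $C' := W \cup_{g_1} T$; write $h|_{C'} \co C' \to C$ for this diffeomorphism, $\iota \co T \to C'$ for the inclusion of the second copy of $T$, and $j \co W \to C'$ for the inclusion of the $h$-cobordism. By \cref{lem:glue-hcob}~\eqref{item:lem-glue-hcob-a} and~\eqref{item:lem-glue-hcob-c} applied with $N = \iota(T)$, the map $\iota$ is a homotopy equivalence with $\tau(\iota) = j_* \bigl( \tau(W, \partial_1 W) \bigr)$. Next, consider the homotopy equivalence $\Phi := h|_{C'} \circ \iota \circ f_T \co K \to C$. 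Post-composing with the inclusion $j_C \co C \to M$ gives $j_C \circ \Phi = h \circ i_3 \circ f_T \simeq \varphi \circ \alpha \simeq j_C \circ (\varphi' \circ \alpha)$, the last homotopy using $j_C \circ \varphi' \simeq \varphi$ from \cref{lem:compl-he}~\eqref{item:lem-compl-he-b}. Since $j_C$ is $\lfloor n/2 \rfloor$-connected (as shown within the proof of \cref{lem:compl-he}~\eqref{item:lem-compl-he-a}) and $\dim K \le \lfloor n/2 \rfloor - 1$, the map $(j_C)_* \co [K,C] \to [K,M]$ is injective, so $\Phi \simeq \varphi' \circ \alpha$.

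Finally, by \cref{prop:WT-composition}, together with $\tau(f_T) = 0$ (thickenings are simple) and $\tau(h|_{C'}) = 0$ (by \cref{thm:chapman}), we obtain $\tau(\Phi) = (h|_{C'})_* j_* \tau(W, \partial_1 W)$, while $\tau(\varphi' \circ \alpha) = \tau(\varphi') + \varphi'_* \tau(\alpha)$. Equating these and pushing forward via $(j_C)_*$, using $j_C \circ h|_{C'} \circ j = h \circ i_2$ and $j_C \circ \varphi' \simeq \varphi$, yields $\tau(M, \varphi) = (h \circ i_2)_* \tau(W, \partial_1 W) - \varphi_* \tau(\alpha)$. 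I expect the main obstacle to be the identification $\Phi \simeq \varphi' \circ \alpha$, which hinges on the connectivity of $j_C$; every other step is a routine application of the tools from \cref{s:prelims} and \cref{s:thick}.
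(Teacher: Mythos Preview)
Your proof is correct and follows essentially the same route as the paper. The only cosmetic differences are that in part~(a) you produce $\alpha$ via the connectivity bijection $\varphi_*\colon [K,K]\to [K,M]$ rather than the paper's explicit formula $\alpha=(\varphi'')^{-1}\circ i_3\circ f_T$, and correspondingly in part~(c) you recover the identification $\Phi\simeq\varphi'\circ\alpha$ from the injectivity of $(j_C)_*$ (i.e.\ \cref{lem:compl-he}~\eqref{item:lem-compl-he-b}) instead of reading it off directly from that formula; the torsion computation is then identical.
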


\begin{proof}
First we introduce some notation. Let $i = h \circ i_1 \colon T \rightarrow M$, $C = h(W \cup_{g_1} T)$, and $\varphi' = h \circ i_1 \circ f_{\partial T} \colon K \rightarrow C$, and let $j \colon C \rightarrow M$ denote the inclusion. Then, by \cref{def:tau-pol}, we have $\tau(M,\varphi) = j_*(\tau(\varphi')) = h_*(\tau(\varphi''))$, where $\varphi'' = i_1 \circ f_{\partial T} \colon K \rightarrow W \cup_{g_1} T$. 

\eqref{item:prop:tau-double-comp-a} By \cref{lem:compl-he}, $\varphi'$ is a homotopy equivalence, and since $h \big| _{W \cup_{g_1} T} \colon W \cup_{g_1} T \rightarrow C$ is a diffeomorphism, $\varphi''$ is a homotopy equivalence too. Let \[\alpha := (\varphi'')^{-1} \circ i_3 \circ f_T \colon K \rightarrow K,\] where $i_3$ is regarded as an inclusion $T \rightarrow W \cup_{g_1} T$ and $(\varphi'')^{-1} \colon W \cup_{g_1} T \rightarrow K$ is the homotopy inverse of $\varphi''$. By \cref{lem:glue-hcob}~\eqref{item:lem-glue-hcob-a}, $i_3$ is a homotopy equivalence, so $\alpha$ is a homotopy equivalence too. Moreover, we compute that 
\[\varphi \circ \alpha \simeq h \circ i_1 \circ f_T \circ \alpha \simeq h \circ i_1 \circ f_{\partial T} \circ \alpha = h \circ \varphi'' \circ \alpha \simeq h \circ i_3 \circ f_T \colon K \rightarrow M.\]

If $\alpha, \alpha' \colon K \rightarrow K$ are two maps such that $\varphi \circ \alpha \simeq \varphi \circ \alpha' \colon K \rightarrow M$, then $\alpha \simeq \alpha'$, because $\varphi$ is $\left\lfloor \frac{n}{2} \right\rfloor$-connected and $K$ has dimension $k \leq \left\lfloor \frac{n}{2} \right\rfloor - 1$. 

\eqref{item:prop:tau-double-comp-b} Since $\varphi \circ \alpha \simeq h \circ i_3 \circ f_T$, we have $\alpha^*(\varphi^*(\nu_M)) \cong f_T^*((h \circ i_3)^*(\nu_M)) \cong f_T^*(\nu_T)$, where we used that $h \circ i_3 \colon T \rightarrow M$ is a codimension $0$ embedding. Since $\varphi \simeq h \circ i_1 \circ f_T$, and $h \circ i_1 \colon T \rightarrow M$ is a codimension $0$ embedding, we similarly obtain $\varphi^*(\nu_M) \cong f_T^*(\nu_T)$. Hence $\alpha^*(\varphi^*(\nu_M)) \cong \varphi^*(\nu_M)$.

\eqref{item:prop:tau-double-comp-c} It follows from the definition of $\alpha$ that $\varphi'' \simeq i_3 \circ f_T \circ \alpha^{-1} \colon K \rightarrow W \cup_{g_1} T$.
We have \[\tau(i_3 \circ f_T \circ \alpha^{-1}) = \tau(i_3) + (i_3)_*(\tau(f_T)) + (i_3 \circ f_T)_*(\tau(\alpha^{-1}))\] by \cref{prop:WT-composition}. The map $f_T$ is a simple homotopy equivalence, and by \cref{lem:glue-hcob}~\eqref{item:lem-glue-hcob-c}, we have that  $\tau(i_3) = (i_2)_*(\tau(W,\partial_1 W))$, where $i_2$ is regarded as an embedding $W \rightarrow W \cup_{g_1} T$. We also have $0 = \tau(\alpha^{-1} \circ \alpha) = \tau(\alpha^{-1}) + \alpha^{-1}_*(\tau(\alpha))$, so 
\[\tau(i_3 \circ f_T \circ \alpha^{-1}) = (i_2)_*(\tau(W,\partial_1 W)) - (i_3 \circ f_T \circ \alpha^{-1})_*(\tau(\alpha)).\] 
Since $\varphi \simeq h \circ i_3 \circ f_T \circ \alpha^{-1}$, we obtain \[\tau(M,\varphi) = h_*(\tau(\varphi'')) = h_*(\tau(i_3 \circ f_T \circ \alpha^{-1})) = (h \circ i_2)_*(\tau(W,\partial_1 W)) - \varphi_*(\tau(\alpha)).\qedhere\] 
\end{proof}

\begin{corollary} \label{cor:tau-tw-double}
Suppose that $n \geq \max(6,2k+2)$ and $(M,\varphi)$ has a twisted double structure $h \colon T \cup_g T \rightarrow M$. Let $\alpha \in \hAut(K)$ be the image of $g$ under the restriction map $\hAut(\partial T) \rightarrow \hAut(K)$ $($see \cref{rem:dt-sp}, \cref{lem:heq-restr} and \cref{rem:heq-restr}$)$. Then $\tau(M,\varphi) = -\varphi_*(\tau(\alpha))$.
\end{corollary}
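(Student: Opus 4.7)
The plan is to view the twisted double structure $h \colon T \cup_g T \rightarrow M$ as a generalised double structure in which the $h$-cobordism $W$ is the trivial product $W = \partial T \times I$, with $g_0 = \id_{\partial T}$ and $g_1 = g$, and then apply \cref{prop:tau-double-comp}\eqref{item:prop:tau-double-comp-c}.

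For the trivial cobordism $W = \partial T \times I$, the inclusion $\partial_1 W \hookrightarrow W$ is a deformation retract, hence a simple homotopy equivalence, so $\tau(W, \partial_1 W) = 0 \in \Wh(\pi_1(W))$. Substituting into the formula of \cref{prop:tau-double-comp}\eqref{item:prop:tau-double-comp-c} gives immediately
\[
\tau(M, \varphi) = -\varphi_*(\tau(\alpha')),
\]
where $\alpha' \in \hAut(K)$ is the (unique up to homotopy) map characterised by \cref{prop:tau-double-comp}\eqref{item:prop:tau-double-comp-a}, namely $\varphi \circ \alpha' \simeq h \circ i_3 \circ f_T$.

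The remaining step is to identify $\alpha'$ with the $\alpha$ appearing in the statement, that is, the image of $g$ under the restriction map $\hAut(\partial T) \to \hAut(K)$. Using $f_T \simeq \iota \circ f_{\partial T}$ from \cref{lem:fdt} (where $\iota \colon \partial T \hookrightarrow T$ is the inclusion), together with the gluing relation in $T \cup_g T$ (which identifies the two copies of $\partial T$ via $g$), the defining relations $\varphi \simeq h \circ i_1 \circ f_T$ and $\varphi \circ \alpha' \simeq h \circ i_3 \circ f_T$ can be rewritten as compatible maps into $\partial T$, reducing to the single identity $f_{\partial T} \circ \alpha' \simeq g \circ f_{\partial T}$ in $\partial T$ (up to the convention for $g$ versus $g^{-1}$). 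But this is precisely the characterisation of the restriction map $\hAut(\partial T) \to \hAut(K)$ provided by \cref{rem:dt-sp}, \cref{lem:heq-restr}, \cref{rem:heq-restr}, whence $\alpha' \simeq \alpha$, as required.

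The main obstacle is purely bookkeeping: one must check that the conventions for the gluing in $T \cup_g T$ and for the restriction map $\hAut(\partial T) \to \hAut(K)$ (as defined in the forward references) are compatible, so that no spurious inversion of $g$ appears. All of the substantive analytic work has already been done in \cref{prop:tau-double-comp}.
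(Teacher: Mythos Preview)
Your proposal is correct and follows essentially the same route as the paper: view the twisted double as a generalised double with trivial $h$-cobordism, so that $\tau(W,\partial_1 W)=0$, and then identify the $\alpha$ of \cref{prop:tau-double-comp} with the restriction of $g$. The only difference is in the bookkeeping step you flag: the paper verifies the identification directly at the level of maps into $M$ via the chain
\[
\varphi \circ \alpha \simeq h \circ i_1 \circ f_T \circ \alpha \simeq h \circ i_1 \circ f_{\partial T} \circ \alpha \simeq h \circ i_1 \circ g \circ f_{\partial T} = h \circ i_2 \circ f_{\partial T} \simeq h \circ i_2 \circ f_T,
\]
and then invokes the uniqueness in \cref{prop:tau-double-comp}\eqref{item:prop:tau-double-comp-a}, whereas you propose to lift down to the relation $f_{\partial T}\circ\alpha' \simeq g\circ f_{\partial T}$ in $\partial T$, which requires (and has available) the connectivity of $\partial T \hookrightarrow T \hookrightarrow M$. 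The paper's chain is slightly cleaner and avoids any worry about $g$ versus $g^{-1}$, but the substance is the same.
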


\begin{proof}
Since the twisted double structure of $(M,\varphi)$ determines a generalised double structure with $W \approx \partial T \times I$, it is enough to show that $\varphi \circ \alpha \simeq h \circ i_2 \circ f_T \colon K \rightarrow M$, where $i_2 \colon T \rightarrow T \cup_g T$ is the inclusion of the second component. This holds, because 
\[\varphi \circ \alpha \simeq h \circ i_1 \circ f_T \circ \alpha \simeq h \circ i_1 \circ f_{\partial T} \circ \alpha \simeq h \circ i_1 \circ g \circ f_{\partial T} = h \circ i_2 \circ f_{\partial T} \simeq h \circ i_2 \circ f_T.\qedhere\] 
\end{proof}

\begin{corollary} \label{cor:tau-triv-double}
Suppose that $n \geq \max(6,2k+2)$ and $(M,\varphi)$ has a trivial double structure. Then $\tau(M,\varphi) = 0$.
\end{corollary}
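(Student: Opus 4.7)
The plan is to derive this as an immediate consequence of \cref{cor:tau-tw-double}. A trivial double structure is by definition a twisted double structure with gluing diffeomorphism $g = \id_{\partial T}$, so the hypotheses of \cref{cor:tau-tw-double} are satisfied.

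First I would observe that the restriction map $\hAut(\partial T) \to \hAut(K)$ is, like any restriction/homotopy-induced map, a group homomorphism, and in particular sends $\id_{\partial T}$ to the class of $\id_K$. (Alternatively, one can simply trace through the construction of $\alpha$ in \cref{prop:tau-double-comp}~\eqref{item:prop:tau-double-comp-a}: when $g = \id_{\partial T}$, the inclusion of the second $T$ factor satisfies $h \circ i_2 \circ f_T \simeq h \circ i_1 \circ f_T \simeq \varphi$, so $\alpha = \id_K$ serves as a choice, and is the unique one up to homotopy.)

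Then applying \cref{cor:tau-tw-double} with $\alpha = \id_K$, and using that $\tau(\id_K) = 0$ (by \cref{lem:WT-ch-comp}, or directly from the definition), we obtain
\[
\tau(M,\varphi) = -\varphi_*(\tau(\id_K)) = -\varphi_*(0) = 0 \in \Wh(\pi_1(M)).
\]
No real obstacle arises; the only point to verify carefully is the identification $\alpha \simeq \id_K$ coming from $g = \id_{\partial T}$, which follows directly from the uniqueness clause in \cref{prop:tau-double-comp}~\eqref{item:prop:tau-double-comp-a}.
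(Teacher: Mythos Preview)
Your proposal is correct and follows essentially the same approach as the paper: apply \cref{cor:tau-tw-double} with $g = \id_{\partial T}$, note that the resulting $\alpha$ is $\id_K$, and conclude $\tau(M,\varphi) = -\varphi_*(\tau(\id_K)) = 0$. The paper's proof is the one-line version of exactly this argument.
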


\begin{proof}
We can apply \cref{cor:tau-tw-double} with $g = \Id_{\partial T}$, and hence $\alpha = \Id_K$.
\end{proof}

\begin{remark}
The converse does not hold. There are even simply-connected counterexamples, e.g.\ take $K$ to be a point and $M$ an exotic sphere.
\end{remark}

\begin{proposition}\label{prop:characterises-twisted-double-structures}
Suppose that $n \geq \max(6,2k+2)$ and $\varphi$ is $\left\lfloor \frac{n}{2} \right\rfloor$-connected. Then $(M,\varphi)$ has a twisted double structure if and only if there is a homotopy automorphism $\alpha \in \hAut(K)$ such that there is an isomorphism of stable vector bundles $\alpha^*(\varphi^*(\nu_M)) \cong \varphi^*(\nu_M)$ and $\tau(M,\varphi) = -\varphi_*(\tau(\alpha)) \in \Wh(\pi_1(M))$. 
\end{proposition}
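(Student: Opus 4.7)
The plan is to handle the ``only if'' direction by a direct application of \cref{prop:tau-double-comp} and \cref{cor:tau-tw-double}, and the ``if'' direction by using $\alpha$ to build a generalised double structure and then showing that the resulting $h$-cobordism must be trivial. For the forward direction, a twisted double structure on $(M,\varphi)$ determines a generalised double structure with $W = \partial T \times I$, so \cref{prop:tau-double-comp} \eqref{item:prop:tau-double-comp-a}--\eqref{item:prop:tau-double-comp-b} produce a homotopy automorphism $\alpha \in \hAut(K)$ with $\alpha^*(\varphi^*(\nu_M)) \cong \varphi^*(\nu_M)$, while \cref{cor:tau-tw-double} (or \cref{prop:tau-double-comp} \eqref{item:prop:tau-double-comp-c} together with $\tau(\partial T \times I, \partial T \times \{1\}) = 0$) gives the torsion formula $\tau(M,\varphi) = -\varphi_*(\tau(\alpha))$.

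For the reverse direction, use \cref{prop:double-conn} to fix a thickening $f_T \co K \to T$ and an embedding $i \co T \to M$ with $i \circ f_T \simeq \varphi$, and set $C := M \sm i(\interior T)$, $j \co C \to M$ the inclusion, and $\varphi' \co K \to C$ the homotopy equivalence provided by \cref{lem:compl-he}. The composition $\varphi' \circ \alpha \co K \to C$ is then a homotopy equivalence, so by \cref{lem:thick-emb} there exists a thickening $f_{T''} \co K \to T''$ and an embedding $i'' \co T'' \to \interior C$ with $i'' \circ f_{T''} \simeq \varphi' \circ \alpha$. Since $j$ and $i''$ are codimension zero embeddings, combining \cref{lem:compl-he} \eqref{item:lem-compl-he-b} with the bundle hypothesis gives
\[
f_{T''}^*(\nu_{T''}) \cong (j \circ i'' \circ f_{T''})^*(\nu_M) \cong (\varphi \circ \alpha)^*(\nu_M) = \alpha^*(\varphi^*(\nu_M)) \cong \varphi^*(\nu_M) \cong f_T^*(\nu_T),
\]
so by the classification \cref{lem:thick-class} the thickenings $T''$ and $T$ are equivalent. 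Absorbing this equivalence into $i''$, we obtain an embedding $i' \co T \to \interior C$ with $i' \circ f_T \simeq \varphi' \circ \alpha$; in particular $i'$ is a homotopy equivalence, so \cref{lem:glue-hcob} \eqref{item:lem-glue-hcob-b} shows that $W := C \sm i'(\interior T)$ is an $h$-cobordism, and $M = i(T) \cup W \cup i'(T)$ defines a generalised double structure on $(M,\varphi)$.

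It remains to show that $W$ is a product. The homotopy automorphism $\alpha_0 \in \hAut(K)$ associated to this generalised double structure by \cref{prop:tau-double-comp} \eqref{item:prop:tau-double-comp-a} satisfies $\varphi \circ \alpha_0 \simeq h \circ i_3 \circ f_T \simeq j \circ i' \circ f_T \simeq j \circ \varphi' \circ \alpha \simeq \varphi \circ \alpha$, and uniqueness forces $\alpha_0 \simeq \alpha$. Hence \cref{prop:tau-double-comp} \eqref{item:prop:tau-double-comp-c} combined with the torsion hypothesis gives $(h \circ i_2)_*(\tau(W, \partial_1 W)) = 0$; since the inclusion of $W$ into the generalised double induces a $\pi_1$-isomorphism, $h \circ i_2$ induces an isomorphism on Whitehead groups, so $\tau(W, \partial_1 W) = 0$ and $W$ is an $s$-cobordism. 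Since $n \geq 6$, the $s$-cobordism theorem yields $W \approx \partial T \times I$, upgrading the generalised double to a twisted double structure. The main obstacle is precisely the bundle-hypothesis-driven step identifying $T''$ with $T$, without which the reconstructed decomposition would not be a double over a single thickening and the $s$-cobordism argument would have no target.
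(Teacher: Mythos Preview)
Your proof is correct. The forward direction matches the paper's. For the reverse direction you take a genuinely different route: the paper uses the torsion hypothesis \emph{first}, computing $\tau(\varphi' \circ \alpha) = 0$ so that $\varphi' \circ \alpha \colon K \to C$ is itself a thickening, and then invokes the bundle hypothesis and \cref{lem:thick-class} to produce a diffeomorphism $H \colon T \to C$ directly, yielding the twisted double $M = i(T) \cup_g H(T)$ in one stroke. You instead use the bundle hypothesis first to embed a second copy of $T$ inside $C$, obtain a generalised double, and only then deploy the torsion hypothesis together with \cref{prop:tau-double-comp} \eqref{item:prop:tau-double-comp-c} and the $s$-cobordism theorem to collapse the intervening $h$-cobordism. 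Your argument is perfectly sound (the dimension hypothesis $n \geq 6$ is exactly what the $s$-cobordism theorem needs), but it is more circuitous: the paper's approach avoids the detour through generalised doubles by recognising that the complement $C$ is already the second copy of $T$, so no $s$-cobordism theorem is needed at this stage.
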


\begin{proof}
If $(M,\varphi)$ has a twisted double structure, then it follows from \cref{prop:tau-double-comp}~\eqref{item:prop:tau-double-comp-b} and the proof of \cref{cor:tau-tw-double} that such an $\alpha$ exists.

Now suppose that there is an $\alpha$ with $\alpha^*(\varphi^*(\nu_M)) \cong \varphi^*(\nu_M)$ and $\tau(M,\varphi) = -\varphi_*(\tau(\alpha))$. Define $f_T \colon K \rightarrow T$, $i$, $C$, $j$ and $\varphi'$ as in \cref{def:tau-pol}, so that $\tau(M,\varphi) = j_*(\tau(\varphi'))$. Let $\psi := \varphi' \circ \alpha \colon K \rightarrow C$. 
Then 
\[
j_*(\tau(\psi)) = j_*(\tau(\varphi') + \varphi'_*(\tau(\alpha))) = \tau(M,\varphi) + (j \circ \varphi')_*(\tau(\alpha)) = \tau(M,\varphi) + \varphi_*(\tau(\alpha)) = 0
\] 
by \cref{prop:WT-composition}, \cref{lem:compl-he} \eqref{item:lem-compl-he-b}, and the hypothesis. As $j_*$ is an isomorphism, this means that $\psi$ is a simple homotopy equivalence, 
hence $\psi \colon K \rightarrow C$ is a thickening. Moreover, 
\[
\psi^*(\nu_C) \cong \alpha^*((\varphi')^*(j^*(\nu_M))) \cong \alpha^*(\varphi^*(\nu_M)) \cong \varphi^*(\nu_M) \cong f_T^*(i^*(\nu_M)) \cong f_T^*(\nu_T).
\]
So by \cref{lem:thick-class} the thickenings $\psi \colon K \rightarrow C$ and $f_T \colon K \rightarrow T$ are equivalent, i.e.\ there is a diffeomorphism $H \colon T \rightarrow C$ such that $H \circ f_T \simeq \psi$. Then $(i \cup H) \colon T \cup_g T \rightarrow M$ is a twisted double structure on $(M,\varphi)$, where $g$ is the composition 
\[
g \colon \partial T \stackrel{H}{\longrightarrow} \partial C = i(\partial T) \stackrel{i^{-1}}{\longrightarrow} \partial T.\qedhere 
\] 
\end{proof}

\begin{corollary} \label{cor:tau-0}
Suppose that $n \geq \max(6,2k+2)$ and $\varphi$ is $\left\lfloor \frac{n}{2} \right\rfloor$-connected. If $\tau(M,\varphi) = 0$, then $(M,\varphi)$ has a twisted double structure.
\end{corollary}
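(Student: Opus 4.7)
The plan is to apply \cref{prop:characterises-twisted-double-structures} with the obvious choice $\alpha = \id_K$. First, I would verify that the hypotheses of \cref{prop:characterises-twisted-double-structures} hold for this $\alpha$: the stable vector bundle condition $\alpha^*(\varphi^*(\nu_M)) \cong \varphi^*(\nu_M)$ is trivially satisfied because $\alpha = \id_K$ means $\alpha^*$ is the identity on isomorphism classes of stable bundles over $K$.

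Next, I would check the Whitehead torsion condition. By \cref{lem:WT-ch-comp} applied to cellular chain complexes (or directly from basic properties of Whitehead torsion of CW maps), we have $\tau(\id_K) = 0 \in \Wh(\pi_1(K))$. Hence $\varphi_*(\tau(\id_K)) = 0$, and the assumed vanishing $\tau(M,\varphi) = 0$ gives $\tau(M,\varphi) = 0 = -\varphi_*(\tau(\id_K))$, as required.

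With both hypotheses verified, \cref{prop:characterises-twisted-double-structures} directly yields that $(M,\varphi)$ has a twisted double structure, completing the proof. There is no substantial obstacle here: the corollary is essentially the specialisation of \cref{prop:characterises-twisted-double-structures} at the identity homotopy automorphism, and all the real work (construction of the thickening $\psi \colon K \to C$ and the gluing diffeomorphism $g$) was already carried out in the proof of that proposition.
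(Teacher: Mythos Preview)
Your proof is correct and takes exactly the same approach as the paper: take $\alpha = \Id_K$ and apply \cref{prop:characterises-twisted-double-structures}. The paper's proof is a single sentence to this effect, and your additional verification of the two hypotheses is accurate but routine.
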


\begin{proof}
Take $\alpha = \Id_K$ and apply \cref{prop:characterises-twisted-double-structures}. 
\end{proof}

\subsection{SP manifolds}

Next we show that if we impose some mild restrictions on $(M,\varphi)$, then $\tau(M,\varphi)$ can be expressed in terms of the Poincar\'e duality chain homotopy equivalence.

\begin{definition} \label{def:SP}
The pair $(M,\varphi)$ is a \emph{split polarised manifold} (SP manifold for short) if at least one of the following conditions holds: 
\begin{clist}{(SP1)}
\item $n \geq \max(7,2k+2)$ and $(M,\varphi)$ has a generalised double structure;
\item $n \geq \max(6,2k+2)$ and $(M,\varphi)$ has a twisted double structure; or
\item $n \geq \max(k+3,2k+1)$ and $(M,\varphi)$ has a trivial double structure.
\end{clist}
\end{definition}

\begin{remark} \label{rem:dt-sp}
If $n \geq \max(6,2k+2)$ and $f_T \colon K \rightarrow T$ is an $n$-dimensional thickening, then $n-1 \geq \max(k+3,2k+1)$ and the proof of \cref{lem:fdt} shows that $(\partial T, f_{\partial T})$ satisfies (SP3).
\end{remark}

We extend the definition of $\tau(M,\varphi)$ to the case when $\max(6,2k+2) > n \geq \max(k+3,2k+1)$ and $(M,\varphi)$ has a trivial double structure by setting $\tau(M,\varphi) = 0$. Then $\tau(M,\varphi)$ is defined for every SP manifold $(M,\varphi)$, and if (SP3) holds, then $\tau(M,\varphi) = 0$.

\begin{theorem} \label{theorem:tau-pd}
Suppose that $(M,\varphi)$ is an SP manifold, and let $G = \pi_1(M)$ with orientation character $w \colon G \rightarrow \left\{ \pm 1 \right\}$. Then $M$ has a CW decomposition with the following properties. 
\benum
\item\label{item:theorem:tau-pd-a} The $\left\lfloor \frac{n}{2} \right\rfloor$-skeleton of $M$ is identified with $K$ via an embedding $K \rightarrow M$ homotopic to $\varphi$.

\item\label{item:theorem:tau-pd-b} Let $C_*(M) = C_*(M; \Z G)$ denote the cellular chain complex of $M$ with $\Z G$ coefficients. Then $C_*(M)$ splits $($see \cref{def:split}$)$.

\item\label{item:theorem:tau-pd-c} Let $\PD \colon C^{n-*}(M) \rightarrow C_*(M)$ denote the chain homotopy equivalence induced by Poincar\'e duality $($which is determined up to chain homotopy by a choice of twisted fundamental class $[M] \in H_n(M;\Z^w) \cong \Z$, where $\Z^w$ is the orientation module$)$. 
Then 
\[
\tau(\PD \big| _{C^{\ell}(M)^{n-*}} \colon C^{\ell}(M)^{n-*} \rightarrow C^u_*(M)) = \tau(M,\varphi) \in \Wh(G,w) \text{.}
\]
\eenum
\end{theorem}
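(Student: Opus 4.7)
The plan is to build a handle decomposition of $M$ adapted to the double structure, read off the cellular chain complex from it, and then compute the restriction of Poincar\'e duality in the resulting basis.

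For part \eqref{item:theorem:tau-pd-a}, I would take a handle decomposition of the thickening $T$ whose handles are in bijection with the cells of $K$; this is possible because $f_T \co K \to T$ is a simple homotopy equivalence, so by \cref{lem:thick-class} the thickening is determined up to equivalence by the stable normal bundle $f_T^*(\nu_T)$. The induced CW decomposition of $T$ contains $K$ as its spine, built entirely from cells of dimension at most $k$. For the second copy $T'$ I use the dual handle decomposition starting from $\partial T'$, which contributes cells of dimension at least $n-k$. For the generalised double case (SP1), with $n \geq 7$, I apply the handle trading lemma for $h$-cobordisms of dimension at least six to put $W$ in a handle decomposition with only two consecutive indices $p, p+1$ satisfying $\lfloor n/2 \rfloor < p \leq n-3$; this range is non-empty precisely when $n \geq 7$. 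Combined, these give the desired CW structure on $M$ with $K$ as its $\lfloor n/2 \rfloor$-skeleton.

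For part \eqref{item:theorem:tau-pd-b}, in most cases the splitting follows by dimension counting: for $n$ even one has $C_{n/2}(M)=0$ since no handles land in dimension $n/2$, and for $n$ odd with $n \geq 2k+2$ (the SP1 and SP2 cases) one has $k < \lfloor n/2 \rfloor$, so $C_{\lfloor n/2 \rfloor}(M)=0$ and the boundary $d_{\lceil n/2 \rceil}$ is automatically zero. The delicate case is (SP3) with $n=2k+1$: since then $M = \partial(T \times I)$, the attaching sphere of each dual $(k+1)$-cell is the boundary of the co-core of the corresponding $k$-handle of $T$, which collapses to a point under the deformation retraction of the handle onto its core. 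Hence $d_{k+1}=0$, and the splitting follows.

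For part \eqref{item:theorem:tau-pd-c}, in this CW structure the basis of $(C^{\ell})^*(M)$ is dual to the cells of $K$, while the basis of $C^u_*(M)$ is given by the dual handles of $T'$ (together with the high-index handles of $W$ in (SP1)). Up to the gluing data $g_0, g_1$ and the $h$-cobordism structure of $W$, the Poincar\'e duality map sends the first basis to the second. I would compute the Whitehead torsion of this change of basis by unwinding the identifications coming from the gluings and from $W$, and then compare with the expression for $\tau(M,\varphi)$ obtained from \cref{prop:tau-double-comp,cor:tau-tw-double,cor:tau-triv-double}. The main obstacle is this matching: it requires carefully tracking the geometric dual-cell data through the algebraic $\PD$ map and unwinding the identifications $\varphi_*$ and $j_*$ used in the definition of $\tau(M,\varphi)$, so that signs and basis changes agree.
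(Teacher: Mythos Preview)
Your handling of parts \eqref{item:theorem:tau-pd-a} and \eqref{item:theorem:tau-pd-b} is essentially the paper's approach: a Morse function pieced together from the handle structure of $T$, the dual handle structure on the second copy, and (in the (SP1) case) a two-index normal form for the $h$-cobordism $W$. One minor point: in the (SP3) case with $n=2k+1$, your geometric argument for $d_{k+1}=0$ is plausible but imprecise as stated; the paper instead uses the retraction $r \colon M = T \cup_{\id} T \to T \to K$, notes that $C_k(r)$ is an isomorphism (since $C_*(r) \circ C_*(f_M) = \id$), and concludes $d_{k+1}^M = 0$ from $C_k(r) \circ d_{k+1}^M = d_{k+1}^K \circ C_{k+1}(r) = 0$.

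Part \eqref{item:theorem:tau-pd-c} is where your proposal has a genuine gap. You acknowledge that ``the main obstacle is this matching'' and propose to unwind the identifications and compare with \cref{prop:tau-double-comp}, but you do not say how, and in fact the paper does not go via \cref{prop:tau-double-comp} at all. The missing idea is to introduce the \emph{reverse} Morse function $\overline{m} = -m$, giving a dual CW structure $\overline{M}$ on the same manifold, together with the basis-preserving isomorphism $I \colon C^{n-*}(M) \to C_*(\overline{M})$ that sends the dual of an $(n-i)$-cell to the corresponding $i$-cell. Poincar\'e duality then factors (up to chain homotopy) as $\PD = C_*(\iota)^{-1} \circ I$, where $\iota \colon M \to \overline{M}$ is a cellular approximation of the identity. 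Since $\tau(I)=0$ and $\tau(C_*(\iota))=0$, one obtains
\[
\tau\bigl(\PD\big|_{(C^{\ell})^{n-*}}\bigr) = -\tau\bigl(C_*(\iota)\big|_{C^u_*}\bigr) = \tau\bigl(C_*(\iota)\big|_{C^{\ell}_*}\bigr) = \tau\bigl(\iota\big|_{f_M(K)}\bigr).
\]
The point is now purely geometric: the $\lfloor n/2 \rfloor$-skeleton $L$ of $\overline{M}$ is precisely the spine of the complement $C = M \setminus i_1(\interior T)$ (it is built from the low-index critical points of $\overline{m}$, which lie in $W \cup T$ or the second $T$), and the inclusion $L \hookrightarrow C$ is a simple homotopy equivalence. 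Hence $\iota|_{f_M(K)} \colon K \to L$ agrees, up to simple equivalences, with the map $\varphi' \colon K \to C$ of \cref{def:tau-pol}, giving $\tau(\iota|_{f_M(K)}) = \tau(\varphi') = \tau(M,\varphi)$. Without this dual-CW factorisation, your plan to ``track the geometric dual-cell data through the algebraic $\PD$ map'' has no concrete mechanism for isolating the torsion contribution.
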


\begin{proof}
\eqref{item:theorem:tau-pd-a} Let $h \colon T \cup_{g_0} W \cup_{g_1} T \rightarrow M$ or $h \colon T \cup_g T \rightarrow M$ be a generalised, twisted or trivial double structure on $(M,\varphi)$ if (SP1), (SP2) or (SP3) holds, respectively (in the last case $g = \id_{\partial T}$). Let $i_1$, $i_2$ and $i_3$ denote the embeddings of the components of $T \cup_{g_0} W \cup_{g_1} T$, or let $i_1$ and $i_2$ denote the embeddings of the components of $T \cup_g T$. 

The thickening $T$ has a handlebody decomposition such that the embedding $f_T \colon K \rightarrow T$ identifies $K$ with the CW complex formed by the cores of the handles, in particular there is a bijection between the $i$-handles and the $i$-cells of $K$ (see \cite[{\S}7]{Wa66}). This handlebody decomposition determines a Morse function $m_0 \colon T \to [0,1]$ such that index-$i$ critical points of $m_0$ correspond to $i$-handles and $m_0^{-1}(1) = \partial T$. By the normal form lemma \cite{Milnor-h-cob}, \cite[Lemma~1.24]{Luck-basic-intro-surgery}, if $n \geq 7$, then there is a Morse function $m_1 \colon W \to [0,1]$ on the $h$-cobordism $W$ such that all critical points have index $\left\lfloor \frac{n}{2} \right\rfloor + 1$ or $\left\lfloor \frac{n}{2} \right\rfloor + 2$, and $m_1^{-1}(0) = \partial_0 W$ and $m_1^{-1}(1) = \partial_1 W$. 

Now we can define a Morse function $m \colon M \to \R$ on $M$. In the case of (SP1) we take $m_0$ on $h \circ i_1(T)$, $m_1+1$ on $h \circ i_2(W)$ and $3-m_0$ on $h \circ i_3(T)$. In the case of (SP2) and (SP3) we take $m_0$ on $h \circ i_1(T)$ and $2-m_0$ on $h \circ i_2(T)$.

The Morse function $m$ determines a handlebody decomposition of $M$, and by \cite[Theorem 4.18]{Ma02} $M$ is homeomorphic to the CW complex formed by the cores of the handles (having one $i$-cell for each index-$i$ critical point of $m$). The critical points of $m$ have index at most $k \leq \left\lfloor \frac{n}{2} \right\rfloor$ in $h \circ i_1(T)$, $\left\lfloor \frac{n}{2} \right\rfloor + 1$ or $\left\lfloor \frac{n}{2} \right\rfloor + 2$ in $h \circ i_2(W)$, and at least $n-k \geq \left\lfloor \frac{n}{2} \right\rfloor + 1$ in $h \circ i_2(T)$ or $h \circ i_3(T)$. Therefore the $\left\lfloor \frac{n}{2} \right\rfloor$-skeleton of $M$ consists of the cores of the handles in $h \circ i_1(T)$.

Let $f_M = h \circ i_1 \circ f_T \colon K \rightarrow M$, then by the above $f_M$ identifies the $\left\lfloor \frac{n}{2} \right\rfloor$-skeleton of $M$ (which is also the $k$-skeleton) with $K$. Moreover, $f_M \simeq \varphi$ by \cref{def:doubles}.

\eqref{item:theorem:tau-pd-b} If $n \geq 2k+2$, then $k \leq \left\lfloor \frac{n}{2} \right\rfloor - 1$, so $M$ has no $\left\lfloor \frac{n}{2} \right\rfloor$-cells. This means that $C_{\left\lfloor n/2 \right\rfloor}(M) = 0$, hence $C_*(M)$ splits. 

If $n = 2k+1$, then $(M, \varphi)$ satisfies (SP3), so $h^{-1}$ is a diffeomorphism $M \rightarrow T \cup_{\id_{\partial T}} T$. There is a well-defined retraction $\id_T \cup \id_T \colon T \cup_{\id_{\partial T}} T \rightarrow T$. Since the embedding $f_T$ is a homotopy equivalence, $K$ is a deformation retract of $T$, and we can compose $h^{-1}$ with the two retractions to get a retraction $r \colon M \to K$. It induces a chain map $C_*(r) \colon C_*(M) \rightarrow C_*(K)$ such that the composition $C_*(K) \stackrel{C_*(f_M)}{\longrightarrow} C_*(M) \stackrel{C_*(r)}{\longrightarrow} C_*(K)$ is the identity. Since $C_i(f_M)$ is an isomorphism for $i \leq k$, we get that $C_k(r)$ is an isomorphism too, and this implies that the differential $C_{k+1}(M) \rightarrow C_k(M)$ vanishes. Therefore $C_*(M)$ splits.

\eqref{item:theorem:tau-pd-c} Let $\overline{m} = -m \colon M \rightarrow \R$ be the reverse Morse function on $M$. It has the same critical points as $m$, with index-$i$ critical points turning into index-$(n-i)$ critical points. It determines a new CW complex homeomorphic to $M$, which we will denote by $\overline{M}$. By cellular approximation there is a cellular map $\iota \colon M \rightarrow \overline{M}$ homotopic to $\id_M$. 

Now let $I \colon C^{n-*}(M) \rightarrow C_*(\overline{M})$ denote the isomorphism that sends the (cochain) dual of an $(n-i)$-cell of $M$ to the corresponding $i$-cell of $\overline{M}$. Then the chain homotopy equivalence $\PD \colon  C^{n-*}(M) \rightarrow C_*(M)$ inducing Poincar\'e duality can be defined (up to chain homotopy) as the composition $C_*(\iota)^{-1} \circ I$, where $C_*(\iota)^{-1}$ denotes the homotopy inverse of the chain homotopy equivalence $C_*(\iota) \colon C_*(M) \rightarrow C_*(\overline{M})$. 

The chain complex $C_*(\overline{M})$ splits, because $C^{n-*}(M)$ splits and $I$ is an isomorphism. By \cref{lem:split-cheq}, we have that $\PD$, $I$, and $C_*(\iota)$ all restrict to chain homotopy equivalences between the upper and lower halves of the chain complexes involved, and $C_*(\iota)^{-1} \big| _{C^{\ell}_*(\overline{M})} = (C_*(\iota) \big| _{C^{\ell}_*(M)})^{-1}$ and $C_*(\iota)^{-1} \big| _{C^u_*(\overline{M})} = (C_*(\iota) \big| _{C^u_*(M)})^{-1}$ (up to chain homotopy). Hence we have:
\[ \PD \big| _{C^u(M)^{n-*}} = (C_*(\iota) \big| _{C^{\ell}_*(M)})^{-1} \circ I \big| _{C^u(M)^{n-*}}, \quad \PD \big| _{C^{\ell}(M)^{n-*}} = (C_*(\iota) \big| _{C^u_*(M)})^{-1} \circ I \big| _{C^{\ell}(M)^{n-*}}.\] 
Since the isomorphism $I$ and its restrictions preserve the standard bases, this implies that 
\[ \tau(\PD \big| _{C^u(M)^{n-*}}) = -\tau(C_*(\iota) \big| _{C^{\ell}_*(M)}) \text{ and }  \tau(\PD \big| _{C^{\ell}(M)^{n-*}}) = -\tau(C_*(\iota) \big| _{C^u_*(M)}).\] 
Now note that $\tau(C_*(\iota) \big| _{C^u_*(M)}) + \tau(C_*(\iota) \big| _{C^{\ell}_*(M)}) = \tau(C_*(\iota)) = 0$. Here we used that $\iota$ is homotopic to $\id_M$, so we can apply~\cref{prop:chain-hom-chain-equivs-same-torsion} and \cref{lem:WT-ch-comp} to deduce that $\tau(C_*(\iota)) = 0$.  Therefore $\tau(\PD \big| _{C^{\ell}(M)^{n-*}}) = -\tau(C_*(\iota) \big| _{C^u_*(M)}) = \tau(C_*(\iota) \big| _{C^{\ell}_*(M)})$.

Let $L$ denote the $\left\lfloor \frac{n}{2} \right\rfloor$-skeleton of $\overline{M}$ and let $C = M \setminus h \circ i_1(\interior T)$. In $\overline{M}$ the handles corresponding to the critical points of $\overline{m}$ of index at most $\left\lfloor \frac{n}{2} \right\rfloor$ (which are the critical points of $m$ of index at least $\left\lceil \frac{n}{2} \right\rceil$) together make up $C$, and $L$ consists of the cores of these handles. By \cite[Theorem 4.18]{Ma02}, it follows that $C$ is homeomorphic to the mapping cylinder of the projection $\partial C \rightarrow L$, so the inclusion $L \rightarrow C$ is a simple homotopy equivalence by \cite[Corollary 5.1A]{Co73}. The cellular map $\iota$ restricts to a map $\iota \big| _{f_M(K)} \colon f_M(K) \rightarrow L$ between the $\left\lfloor \frac{n}{2} \right\rfloor$-skeletons of $M$ and $\overline{M}$. The inclusions determine isomorphisms $C^{\ell}_*(M) \cong C_*(f_M(K))$ and $C^{\ell}_*(\overline{M}) \cong C_*(L)$ (preserving the standard bases), hence $\tau(C_*(\iota) \big| _{C^{\ell}_*(M)}) = \tau(\iota \big| _{f_M(K)})$.  

First assume that (SP1) or (SP2) holds. Let $\varphi'$ denote the composition $K \stackrel{f_{\partial T}}{\longrightarrow} \partial T \stackrel{i_1}{\longrightarrow} i_1(\partial T) \rightarrow C$, so that $\tau(M,\varphi) = \tau(\varphi')$ (when $\pi_1(C)$ is identified with $\pi_1(M)$ via the inclusion). The composition $K \stackrel{f_M}{\longrightarrow} f_M(K) \stackrel{\iota}{\longrightarrow} L \rightarrow C \rightarrow M$ is homotopic to $\varphi$, because $f_M \simeq \varphi$ and $\iota \simeq \id_M$. So by \cref{lem:compl-he}~\eqref{item:lem-compl-he-b} the composition $K \stackrel{f_M}{\longrightarrow} f_M(K) \stackrel{\iota}{\longrightarrow} L \rightarrow C$ is homotopic to $\varphi'$. This shows that $\tau(\iota \big| _{f_M(K)}) = \tau(\varphi') = \tau(M,\varphi)$, because the homeomorphism $f_M \colon K \rightarrow f_M(K)$ and the inclusion $L \rightarrow C$ have vanishing Whitehead torsion. 

Now assume that (SP3) holds. Then $L = h \circ i_2 \circ f_T(K)$ and $r \big| _L \colon L \rightarrow K$ is a homeomorphism (by the definition of $r$). Moreover, $r \circ \iota \circ f_M \simeq r \circ \id_M \circ f_M = \id_K$. Since $f_M$ is also a homeomorphism, this means that $\iota \big| _{f_M(K)}$ is homotopic to a homeomorphism, so $\tau(\iota \big| _{f_M(K)}) = 0 = \tau(M,\varphi)$. 

Therefore $\tau(\PD \big| _{C^{\ell}(M)^{n-*}}) = \tau(M,\varphi)$ in all cases.
\end{proof}

\section{The Whitehead torsion of homotopy equivalences of doubles} \label{s:doubles-main}

In this section we prove \cref{theorem:SP-manifolds-and-tau-intro}. 
Fix positive integers $n, k$ with $n \geq 2k+1$. Let $M$ and $N$ be closed $n$-manifolds, and let $K$ and $L$ be finite CW complexes of dimension (at most) $k$. Suppose that $\varphi \colon K \rightarrow M$ and $\psi \colon L \rightarrow N$ are continuous maps such that $(M,\varphi)$ and $(N,\psi)$ are SP manifolds.

Let $F := \pi_1(M)$ and $G := \pi_1(N)$. The maps $\varphi$ and $\psi$ are $\left\lfloor \frac{n}{2} \right\rfloor$-connected by \cref{prop:double-conn}, so $\pi_1(\varphi)$ and $\pi_1(\psi)$ are isomorphisms. We use these isomorphisms to identify $\pi_1(K)$ with $F = \pi_1(M)$ and $\pi_1(L)$ with $G = \pi_1(N)$.

Let $w_M \colon F \rightarrow \left\{ \pm 1 \right\}$ and $w = w_N \colon G \rightarrow \left\{ \pm 1 \right\}$ be the orientation characters of $M$ and $N$ respectively. Fix twisted fundamental classes $[M] \in H_n(M;\Z^{w_M})$ and $[N] \in H_n(N;\Z^w)$. For any homotopy equivalence $f \colon M \rightarrow N$ we have $w \circ \pi_1(f) = w_M$ and $f_*([M]) = \varepsilon [N]$ for $\varepsilon = 1$ or $-1$.

\begin{definition}
For topological spaces $X$, $Y$, let $\hEq(X,Y)$ denote the set of homotopy classes of homotopy equivalences $X \to Y$. For an isomorphism $\theta \colon \pi_1(X) \rightarrow \pi_1(Y)$, let $\hEq_{\theta}(X,Y) \subseteq \hEq(X,Y)$ denote the subset consisting of homotopy equivalences $f$ such that $\pi_1(f) = \theta$.

For $\varepsilon = \pm 1$, let $\hEq(M,N)_{\varepsilon} \subseteq \hEq(M,N)$ denote the subset of degree-$\varepsilon$ homotopy equivalences, i.e.\ those that send $[M]$ to $\varepsilon [N]$. For an isomorphism $\theta \colon F \rightarrow G$, let $\hEq_{\theta}(M,N)_{\varepsilon} = \hEq_{\theta}(M,N) \cap \hEq(M,N)_{\varepsilon}$.
\end{definition}

\begin{lemma} \label{lem:hom-restr}
Suppose that $X$ is a CW complex of dimension at most $k$, and let $f,g \colon X \rightarrow L$ be continuous maps. If $\psi \circ f \simeq \psi \circ g \colon X \rightarrow N$, then $f \simeq g$. 
\end{lemma}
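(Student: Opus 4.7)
\emph{Plan.} The strategy is obstruction-theoretic: since $\psi$ is $\lfloor n/2 \rfloor$-connected (as $(N,\psi)$ is an SP manifold) and $\dim X \leq k$, lifts of maps and homotopies through $\psi$ ought to be essentially unique. The dimension comparison between $k$ and $\lfloor n/2 \rfloor$ is tight exactly when $n = 2k+1$, so I handle the two ranges of $n$ separately, exploiting the SP structure on $(N,\psi)$ in each.

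Suppose first that $n \geq 2k+2$, so $(N,\psi)$ satisfies (SP1) or (SP2). Then $\dim(X \times I) \leq k+1 \leq \lfloor n/2 \rfloor$. By \cref{theorem:tau-pd}~\eqref{item:theorem:tau-pd-a} applied to $(N,\psi)$, the manifold $N$ admits a CW decomposition whose $\lfloor n/2 \rfloor$-skeleton is identified with $L$ via an embedding $\iota \colon L \to N$ homotopic to $\psi$. Since both the hypothesis and the conclusion of the lemma are invariant under replacing $\psi$ by a homotopic map, I may assume $\psi = \iota$. I then cellularly approximate $f$ and $g$ (which preserves both $f \simeq g$ and the hypothesis up to homotopy) so that $\psi \circ f$ and $\psi \circ g$ become cellular maps to $N$. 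Applying the cellular approximation theorem relative to $X \times \{0,1\}$ to a homotopy $F \colon X \times I \to N$ between $\psi \circ f$ and $\psi \circ g$ yields a cellular homotopy $F'$ rel the boundary. Since $\dim(X \times I) \leq \lfloor n/2 \rfloor$, the image of $F'$ lies in the $\lfloor n/2 \rfloor$-skeleton, namely $L$, producing the required homotopy $f \simeq g$ in $L$.

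When $n = 2k+1$, the dimensional constraints rule out (SP1) and (SP2), so (SP3) must hold: there is a thickening $f_T \colon L \to T$ and a diffeomorphism $h \colon T \cup_{\id_{\partial T}} T \to N$ with $\psi \simeq h \circ i_1 \circ f_T$. The fold map $T \cup_{\id} T \to T$, transported via $h$, yields a retraction $\rho \colon N \to T$, and composing with a homotopy inverse to the simple homotopy equivalence $f_T$ gives a retraction $r \colon N \to L$ with $r \circ \psi \simeq \id_L$ (this is exactly the retraction constructed in the proof of \cref{theorem:tau-pd}~\eqref{item:theorem:tau-pd-b}). Composing the given homotopy $\psi \circ f \simeq \psi \circ g$ with $r$ then yields $f \simeq g$ immediately.

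The principal subtlety in the whole argument is arranging, in the first case, that the cellular approximation of $F$ can be carried out rel $X \times \{0,1\}$; this requires the small bootstrap of first cellularly approximating $f$ and $g$ themselves so that the boundary values $\psi \circ f$ and $\psi \circ g$ are already cellular. Beyond this, the argument reduces to the standard cellular approximation theorem in the $n \geq 2k+2$ case, and to the direct retraction argument furnished by (SP3) in the borderline case $n = 2k+1$.
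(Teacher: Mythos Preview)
Your proof is correct and follows essentially the same approach as the paper: split into cases, use the CW decomposition from \cref{theorem:tau-pd}~\eqref{item:theorem:tau-pd-a} and cellular approximation of the homotopy when $n \geq 2k+2$, and use the retraction $r \colon N \to L$ from the proof of \cref{theorem:tau-pd}~\eqref{item:theorem:tau-pd-b} in the (SP3) case. One small inaccuracy: the inference ``$n \geq 2k+2$, so $(N,\psi)$ satisfies (SP1) or (SP2)'' is not valid, since (SP3) is also possible when $n \geq 2k+2$; however, your cellular-approximation argument in that case only uses \cref{theorem:tau-pd}~\eqref{item:theorem:tau-pd-a} and the inequality $k+1 \leq \lfloor n/2 \rfloor$, so it goes through regardless of which SP condition holds.
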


\begin{proof}
By \cref{theorem:tau-pd}~\eqref{item:theorem:tau-pd-a}, $N$ has a CW decomposition such that (up to homotopy) $\psi$ is an embedding identifying $L$ with the $\left\lfloor \frac{n}{2} \right\rfloor$-skeleton of $N$. 

If $(N,\psi)$ satisfies (SP1) or (SP2), then $N$ has no $(k+1)$-cells. Therefore if we make the homotopy between $\psi \circ f$ and $\psi \circ g$ cellular, we obtain a homotopy between $f$ and $g$. 

If $(N,\psi)$ satisfies (SP3), then we can compose the homotopy between $\psi \circ f$ and $\psi \circ g$ with the retraction $r \colon N \rightarrow L$ from the proof of \cref{theorem:tau-pd}~\eqref{item:theorem:tau-pd-b} to get a homotopy between $f$ and $g$. 
\end{proof}

\begin{lemma} \label{lem:heq-restr}
There is a well-defined restriction map $\hEq(M,N) \rightarrow \hEq(K,L)$. 
\end{lemma}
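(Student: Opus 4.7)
My plan is as follows. First I would invoke \cref{theorem:tau-pd}~\eqref{item:theorem:tau-pd-a} to fix CW decompositions of $M$ and $N$ in which $\varphi$ and $\psi$ are identified, up to homotopy, with the inclusions of the respective $\lfloor n/2 \rfloor$-skeleta. Since $\dim K \leq k \leq \lfloor n/2 \rfloor$, the $\lfloor n/2 \rfloor$-skeleton of $M$ coincides with its $k$-skeleton, namely $K$, and similarly for $L \subseteq N$. Given $f \in \hEq(M,N)$, cellular approximation produces a cellular representative $f' \simeq f$, and since $f'$ carries the $k$-skeleton into the $k$-skeleton, the formula $\alpha \coloneqq f'|_K \colon K \to L$ is defined; I would send $[f]$ to $[\alpha]$.

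The substantive step is to verify that $\alpha$ is a homotopy equivalence. By construction $\psi \circ \alpha \simeq f \circ \varphi$, and because $\varphi$ and $\psi$ are $\lfloor n/2 \rfloor$-connected (\cref{prop:double-conn}) and $f$ is a homotopy equivalence, a direct diagram chase shows $\pi_1(\alpha)$ is an isomorphism. Passing to homology with local coefficients in $\Z \pi_1(N)$, the map $H_i(\varphi)$ is an isomorphism for $i < \lfloor n/2 \rfloor$; in the borderline case $k = \lfloor n/2 \rfloor$ permitted by (SP3), the splitting $d_{\lceil n/2 \rceil} = 0$ supplied by \cref{theorem:tau-pd}~\eqref{item:theorem:tau-pd-b} upgrades this to an isomorphism in degree $k$ as well, and the same applies to $\psi$. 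Combining with $H_i(f)$ being an isomorphism for every $i$ then forces $H_i(\alpha)$ to be an isomorphism for all $i \leq k$, and for $i > k$ both $H_i(K)$ and $H_i(L)$ vanish. Whitehead's theorem now implies that $\alpha$ is a homotopy equivalence.

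For the well-definedness of $[f] \mapsto [\alpha]$, suppose $f \simeq g$ have cellular representatives $f', g'$ with associated restrictions $\alpha, \beta \colon K \to L$. Then
\[
\psi \circ \alpha = f' \circ \varphi \simeq f \circ \varphi \simeq g \circ \varphi \simeq g' \circ \varphi = \psi \circ \beta,
\]
so \cref{lem:hom-restr}, applied to $X = K$, yields $\alpha \simeq \beta$ as maps $K \to L$. The same reasoning makes the construction independent of the choice of cellular approximation.

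The main obstacle, as I anticipate it, is the boundary case $k = \lfloor n/2 \rfloor$ in the verification that $\alpha$ is a homotopy equivalence: generic $\lfloor n/2 \rfloor$-connectedness stops just short of producing an isomorphism in degree $k$, and one must exploit the chain-level splitting from \cref{theorem:tau-pd}~\eqref{item:theorem:tau-pd-b} to push through. Everything else is a routine consequence of cellular approximation and \cref{lem:hom-restr}.
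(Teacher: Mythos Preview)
Your argument is correct, but the route you take to show that $\alpha$ is a homotopy equivalence differs from the paper's. You verify directly that $\alpha$ induces isomorphisms on $\pi_1$ and on homology with $\Z G$-coefficients, handling the borderline case $k = \lfloor n/2 \rfloor$ by appealing to the chain-level splitting $d_{\lceil n/2\rceil}=0$ from \cref{theorem:tau-pd}~\eqref{item:theorem:tau-pd-b}, and then invoke Whitehead's theorem. The paper instead exploits the symmetry of the situation: it also restricts a cellular homotopy inverse $g \colon N \to M$ to $\beta \colon L \to K$, observes that $\alpha \circ \beta$ and $\beta \circ \alpha$ are homotopic to the identities after composing with $\psi$ and $\varphi$, and then applies \cref{lem:hom-restr} once more to conclude $\alpha \circ \beta \simeq \id_L$ and $\beta \circ \alpha \simeq \id_K$. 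This avoids any homology computation and in particular the separate treatment of the case $k=\lfloor n/2\rfloor$; the price is that one must set up the restriction map $[N,M]\to[L,K]$ as well. Your approach, by contrast, produces the homotopy equivalence without ever touching the inverse of $f$, at the cost of the case split you flagged.
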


\begin{proof}
Again we fix CW decompositions on $M$ and $N$ using \cref{theorem:tau-pd}~\eqref{item:theorem:tau-pd-a}.

Consider a continuous map $M \rightarrow N$. After cellular approximation it can be restricted to a map $K \rightarrow L$, and by \cref{lem:hom-restr} the restriction's homotopy class is independent of the choice of the approximation. Therefore restriction defines a map $[M,N] \rightarrow [K,L]$. Similarly we get a map $[N,M] \rightarrow [L,K]$.

Now suppose that $f \colon M \rightarrow N$ is a cellular homotopy equivalence and $g$ is its cellular homotopy inverse. Then $f \circ g \simeq \id_N$ and $g \circ f \simeq \id_M$, hence $f \big| _{\varphi(K)} \circ g \big| _{\psi(L)} \simeq \id_{\psi(L)} \colon \psi(L) \rightarrow N$ and $g \big| _{\psi(L)} \circ f \big| _{\varphi(K)} \simeq \id_{\varphi(K)} \colon \varphi(K) \rightarrow M$. \cref{lem:hom-restr} implies that $f \big| _{\varphi(K)} \circ g \big| _{\psi(L)} \simeq \id_{\psi(L)} \colon \psi(L) \rightarrow \psi(L)$ and $g \big| _{\psi(L)} \circ f \big| _{\varphi(K)} \simeq \id_{\varphi(K)} \colon \varphi(K) \rightarrow \varphi(K)$, therefore $f \big| _{\varphi(K)} \colon \varphi(K) \rightarrow \psi(L)$ is a homotopy equivalence. 
\end{proof}

\begin{remark} \label{rem:heq-restr}
The restriction $\alpha \in \hEq(K,L)$ of a map $f \in \hEq(M,N)$ is characterised by the property that $\psi \circ \alpha \simeq f \circ \varphi \colon K \rightarrow N$, i.e.\ the following diagram is homotopy commutative: 
\[
\xymatrix{
M \ar[r]^-{f} & N \\
K \ar[r]^-{\alpha} \ar[u]^{\varphi} & L \ar[u]_{\psi}
}
\]
This implies that $\pi_1(\alpha) = \pi_1(f) \in \Hom(F,G)$. Therefore for every isomorphism $\theta \colon F \rightarrow G$ the restriction map of \cref{lem:heq-restr} restricts to a map $\hEq_{\theta}(M,N) \rightarrow \hEq_{\theta}(K,L)$. 
\end{remark}

Finally, we will establish the following, which is an equivalent formulation of \cref{theorem:SP-manifolds-and-tau-intro}.

\begin{theorem} \label{theorem:heq-diag}
For every isomorphism $\theta \colon F \rightarrow G$ with $w \circ \theta = w_M$ there is a commutative diagram
\[
\hspace{35mm}
\xymatrix{
\hEq_{\theta}(M,N) \ar[r]^-{\tau} \ar[d] & \Wh(G,w) \\
\hEq_{\theta}(K,L) \ar[r]^-{\tau} & \Wh(G,w) \ar[u]_{x \mapsto x-(-1)^n\overline{x}+\tau(N,\psi)-\theta_*(\tau(M,\varphi))}
}
\]
where the vertical map on the left is given by restriction.
\end{theorem}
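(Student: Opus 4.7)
The plan is to reduce \cref{theorem:heq-diag} to the algebraic formula of \cref{lem:cheq-diag}, applied to the cellular chain maps of $M$ and $N$ in the split CW structures produced by \cref{theorem:tau-pd}. First I would invoke \cref{theorem:tau-pd}\eqref{item:theorem:tau-pd-a} to fix CW decompositions of $M$ and $N$ in which $K$ and $L$ appear as the $\lfloor n/2 \rfloor$-skeletons, via embeddings homotopic to $\varphi$ and $\psi$. Replacing $f$ by a cellular representative of its homotopy class (as in the proof of \cref{lem:heq-restr}), the restriction of $f$ to the $\lfloor n/2 \rfloor$-skeleton is a cellular representative of $\alpha$, which establishes commutativity of the outer square in the statement together with the identification of the vertical restriction map. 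Setting $C_* := C_*(M;\Z G^\theta)$ and $D_* := C_*(N;\Z G)$, \cref{theorem:tau-pd}\eqref{item:theorem:tau-pd-b} ensures that both chain complexes split, and the above identifications give $C^\ell_* = C_*(K;\Z G^\theta)$ and $D^\ell_* = C_*(L;\Z G)$, with $f_*|_{C^\ell_*} = \alpha_*$, so that $\tau(f_*|_{C^\ell_*}) = \tau(\alpha)$.

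Next I would verify that $f_*$ lies in $\chEq(C_*, D_*)_{P,Q}$ for chain-level Poincar\'e duality maps $P \colon C^{n-*} \to C_*$ and $Q \colon D^{n-*} \to D_*$ determined by twisted fundamental classes $[M]$ and $[N]$. Naturality of the cap product gives $f_* \circ P \simeq \varepsilon \cdot Q \circ f^*$ with $\varepsilon = \pm 1$ such that $f_*[M] = \varepsilon [N]$. Replacing $[N]$ by $\varepsilon[N]$ rescales $Q$ by $\varepsilon$, but this does not alter the restriction's torsion $\tau(Q|_{(D^\ell)^{n-*}}) \in \Wh(G,w)$, since the class of $-1$ is trivial there; after this harmless sign change, $f_*$ satisfies the homotopy commutativity required by \cref{def:ch-deg1}.

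Finally I would identify the two algebraic error terms appearing in \cref{lem:cheq-diag}. Applying \cref{theorem:tau-pd}\eqref{item:theorem:tau-pd-c} to $(N,\psi)$ yields $\tau(Q|_{(D^\ell)^{n-*}}) = \tau(N,\psi)$. Applying it to $(M,\varphi)$ computes the analogous invariant over $\Z F$-coefficients as $\tau(M,\varphi) \in \Wh(F,w_M)$; since base change along $\theta$ from $\Z F$ to $\Z G^\theta$ pushes Whitehead torsion forward via $\theta_* = f_*$, one obtains $\tau(P|_{(C^\ell)^{n-*}}) = f_*(\tau(M,\varphi))$. Substituting these into the formula of \cref{lem:cheq-diag} gives
\[
\tau(f) = \tau(f_*) = \tau(\alpha) - (-1)^n \overline{\tau(\alpha)} + \tau(N,\psi) - f_*(\tau(M,\varphi)),
\]
as required. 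The main delicate point I anticipate is the second step: pinning down a chain-level representative of Poincar\'e duality with $\Z G^\theta$-twisted coefficients and orientation character $w$ for which the naturality square commutes up to chain homotopy, and confirming that the degree sign can be absorbed without affecting the Whitehead torsion. The first and third steps are then essentially bookkeeping built on top of the results already established in the excerpt.
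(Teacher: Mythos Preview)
Your proposal is correct and follows essentially the same approach as the paper: both reduce to \cref{lem:cheq-diag} via the split CW structures of \cref{theorem:tau-pd}, identify the lower halves with $C_*(K)$ and $C_*(L)$, verify the Poincar\'e duality compatibility for $f_*$, and compute the error terms using \cref{theorem:tau-pd}\eqref{item:theorem:tau-pd-c} together with the base-change formula for Whitehead torsion along $\theta$. The only cosmetic difference is in the treatment of the degree sign: the paper handles $\varepsilon=-1$ by replacing $N$ with $-N$ and noting $\tau(-N,\psi)=\tau(N,\psi)$, whereas you absorb $\varepsilon$ into $Q$ and use that $[-1]=0$ in $\Wh(G)$; these are equivalent.
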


\begin{proof}
Fix CW decompositions on $M$ and $N$ as in \cref{theorem:tau-pd}. We denote the corresponding (split) cellular chain complexes by $C_*(M) = C_*(M;\Z F)$ and $C_*(N) = C_*(N;\Z G)$. Let $\PD^M \colon C^{n-*}(M) \rightarrow C_*(M)$ and $\PD^N \colon C^{n-*}(N) \rightarrow C_*(N)$ denote the chain homotopy equivalences given by Poincar\'e duality. By \cref{theorem:tau-pd}~\eqref{item:theorem:tau-pd-c}, we have $\tau(\PD^M \big| _{C^{\ell}(M)^{n-*}}) = \tau(M,\varphi)$ and $\tau(\PD^N \big| _{C^{\ell}(N)^{n-*}}) = \tau(N,\psi)$. 
We will show that there is a commutative diagram 
\begin{equation}\label{diagram:heq-Wh}
\xymatrix{
\hEq_{\theta}(M,N)_1 \ar[r]^-{C_*({-})} \ar[d] & \chEq(C_*(M)_{\theta^{-1}},C_*(N))_{\PD^M,\PD^N} \ar[r]^-{\tau} \ar[d] & \Wh(G,w) \\
\hEq_{\theta}(K,L) \ar[r]^-{C_*({-})} & \chEq(C_*(K)_{\theta^{-1}},C_*(L)) \ar[r]^-{\tau} & \Wh(G,w). \ar[u]_{x \mapsto x-(-1)^n\overline{x}+\tau(N,\psi)-\theta_*(\tau(M,\varphi))}
}
\end{equation}
For the notation see  \cref{def:twist-module,def:ch-deg1}.  
We begin by describing the three maps not yet defined.

Since $K$ and $L$ are the $\left\lfloor \frac{n}{2} \right\rfloor$-skeletons of $M$ and $N$ respectively, we have $C^{\ell}_*(M) \cong C_*(K)$ (hence $C^{\ell}_*(M)_{\theta^{-1}} \cong C_*(K)_{\theta^{-1}}$) and $C^{\ell}_*(N) \cong C_*(L)$. By \cref{lem:split-cheq}, there is a restriction map $\chEq(C_*(M)_{\theta^{-1}},C_*(N)) \to \chEq(C_*(K)_{\theta^{-1}},C_*(L))$, and we define the vertical map in the middle to be its restriction to $\chEq(C_*(M)_{\theta^{-1}},C_*(N))_{\PD^M,\PD^N}$. 

Suppose that $f \in \hEq_{\theta}(K,L)$, then $f$ induces a chain map $C_*(K;\Z G^{\theta}) \rightarrow C_*(L;\Z G)$. We have $C_*(K;\Z G^{\theta}) \cong C_*(K;\Z F)_{\theta^{-1}}$ (see \cite[Lemma 2.26 (a)]{paper1}), so taking the induced chain map defines a map $C_*({-}) \colon \hEq_{\theta}(K,L) \rightarrow \chEq(C_*(K)_{\theta^{-1}},C_*(L))$. 

Similarly, a homotopy equivalence $f \in \hEq_{\theta}(M,N)$ 
induces maps 
\begin{align*} 
C_*(f) &\colon C_*(M;\Z G^{\theta}) \cong C_*(M)_{\theta^{-1}} \to C_*(N;\Z G) = C_*(N) \\ 
C^*(f) &\colon C^*(N;\Z G) = C^*(N) \to C^*(M;\Z G^{\theta}) \cong C^*(M)_{\theta^{-1}}
\end{align*}
where the last isomorphism follows from \cite[Lemma 2.26 (b)]{paper1}. Moreover, $C^*(M;\Z G^{\theta})$ is the dual of $C_*(M)_{\theta^{-1}}$, and $C^*(f)$ is identified with the dual of $C_*(f)$. The chain homotopy equivalence $C^{n-*}(M;\Z G^{\theta}) \cong C^{n-*}(M)_{\theta^{-1}} \rightarrow C_*(M;\Z G^{\theta}) \cong C_*(M)_{\theta^{-1}}$ given by Poincar\'e duality (with $\Z G^{\theta}$ coefficients) is identified with $\PD^M$ under the identification $\Hom(C^{n-*}(M)_{\theta^{-1}}, C_*(M)_{\theta^{-1}}) = \Hom(C^{n-*}(M), C_*(M))$. If $f$ has degree $1$, i.e.\ $f_*([M]) = [N]$, then it induces a homotopy commutative diagram
\[
\xymatrix{
C^{n-*}(M)_{\theta^{-1}} \ar[d]_{\PD^M} & C^{n-*}(N) \ar[l]_-{C^{n-*}(f)} \ar[d]^{\PD^N} \\
C_*(M)_{\theta^{-1}} \ar[r]^-{C_*(f)} & C_*(N)
}
\]
since Poincar\'e duality can be defined by taking cap product with the fundamental class. Therefore we get a restricted map $C_*({-}) \colon \hEq_{\theta}(M,N)_1 \rightarrow \chEq(C_*(M)_{\theta^{-1}},C_*(N))_{\PD^M,\PD^N}$.

Next we verify that the diagram in \eqref{diagram:heq-Wh} commutes. The square on the left commutes since both downward pointing arrows are defined by restriction. The square on the right commutes by \cref{lem:cheq-diag}. Note that if $\PD^M \big| _{C^{\ell}(M)^{n-*}}$ is regarded as a chain homotopy equivalence $C^{\ell}(M)^{n-*}_{\theta^{-1}} \rightarrow C^u_*(M)_{\theta^{-1}}$ (instead of $C^{\ell}(M)^{n-*} \rightarrow C^u_*(M)$), then its Whitehead torsion is $\theta_*(\tau(M,\varphi))$ (instead of $\tau(M,\varphi)$), see \cite[Lemma 2.27]{paper1}.

The Whitehead torsion of a homotopy equivalence is defined as the Whitehead torsion of the induced chain homotopy equivalence, so from \eqref{diagram:heq-Wh} we get a commutative diagram 
\[
\hspace{35mm}
\xymatrix{
\hEq_{\theta}(M,N)_1 \ar[r]^-{\tau} \ar[d] & \Wh(G,w) \\
\hEq_{\theta}(K,L) \ar[r]^-{\tau} & \Wh(G,w) \ar[u]_{x \mapsto x-(-1)^n\overline{x}+\tau(N,\psi)-\theta_*(\tau(M,\varphi))}
}
\]

We can apply the same argument to $-N$ instead of $N$ (where $-N$ is the same manifold $N$ with the opposite twisted fundamental class $[-N] = -[N]$). Then we get a commutative diagram
\[
\hspace{35mm}
\xymatrix{
\hEq_{\theta}(M,N)_{-1} \ar[r]^-{\tau} \ar[d] & \Wh(G,w) \\
\hEq_{\theta}(K,L) \ar[r]^-{\tau} & \Wh(G,w) \ar[u]_{x \mapsto x-(-1)^n\overline{x}+\tau(N,\psi)-\theta_*(\tau(M,\varphi))}
}
\]
because $\hEq_{\theta}(M,-N)_1 = \hEq_{\theta}(M,N)_{-1}$ and $\tau(-N,\psi) = \tau(N,\psi)$ (because the definition of $\tau(N,\psi)$ does not depend on the choice of the twisted fundamental class).

Since $\hEq_{\theta}(M,N) = \hEq_{\theta}(M,N)_1 \sqcup \hEq_{\theta}(M,N)_{-1}$, we can combine the two diagrams to get the diagram in the statement.
\end{proof}

\section{Applications} \label{s:appl}

Now we consider some applications of \cref{theorem:SP-manifolds-and-tau-intro} and prove the results announced in \cref{ss:appl}.

\subsection{Simple doubles} \label{ss:s-doubles}

Let $(M,\varphi)$ be a polarised manifold such that $\tau(M,\varphi)$ is defined. We say that $(M,\varphi)$ is \emph{simple} if $\tau(M,\varphi)=0$. Recall from \cref{cor:tau-triv-double,cor:tau-0} that if $(M,\varphi)$ has a trivial double structure, then it is simple, and if $(M,\varphi)$ is simple, then it has a twisted double structure. We now state a consequence of \cref{theorem:heq-diag} in the special case when $(M,\varphi)$ is simple, and then use it to prove \cref{thmx:doubles2}.

\begin{theorem}\label{thm:simple-doubles-first-application}
Suppose that $M$ and $N$ are $n$-manifolds, $K$ and $L$ are CW complexes of dimension $($at most$)$ $k$, and $\varphi \colon K \to M$ and $\psi \colon L \to N$ are continuous maps such that $(M,\varphi)$ and $(N,\psi)$ are SP manifolds. Let $G = \pi_1(N)$ with orientation character $w \colon G \rightarrow \left\{ \pm 1 \right\}$, and identify $\pi_1(L)$ with $G$ via $\psi$. Suppose that $\tau(M,\varphi)=0$. Then there is a commutative diagram
\[
\hspace{20mm}
\xymatrix{
\hEq(M,N) \ar[r]^-{\tau} \ar[d] & \Wh(G,w) \\
\hEq(K,L) \ar[r]^-{\tau} & \Wh(G,w) \ar[u]_{x \mapsto x-(-1)^n\overline{x}+\tau(N,\psi)}
}
\]
where the vertical map on the left is given by restriction.
\end{theorem}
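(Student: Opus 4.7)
The plan is to reduce this directly to Theorem \ref{theorem:heq-diag} by partitioning the sets of homotopy equivalences according to the induced $\pi_1$-isomorphism, and then observing that the hypothesis $\tau(M,\varphi)=0$ makes the error term in that theorem uniform across the partition.

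First I would note that any homotopy equivalence $f \colon M \to N$ between closed $n$-manifolds induces an isomorphism $\theta = \pi_1(f) \colon F \to G$ that automatically satisfies $w \circ \theta = w_M$, since $f$ pulls the orientation character back to the orientation character. Hence $\hEq(M,N)$ decomposes as a disjoint union $\bigsqcup_{\theta} \hEq_{\theta}(M,N)$, where $\theta$ ranges over isomorphisms $F \to G$ with $w \circ \theta = w_M$. Similarly, $\hEq(K,L)$ decomposes as $\bigsqcup_{\theta} \hEq_{\theta}(K,L)$, and by \cref{rem:heq-restr} the restriction map of \cref{lem:heq-restr} respects these decompositions, i.e.\ it sends $\hEq_{\theta}(M,N)$ into $\hEq_{\theta}(K,L)$ for the same $\theta$.

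Now the hypothesis $\tau(M,\varphi) = 0$ kicks in: for every $\theta$ the term $\theta_*(\tau(M,\varphi)) \in \Wh(G,w)$ vanishes, so the right-hand vertical map in the diagram of \cref{theorem:heq-diag} becomes $x \mapsto x - (-1)^n \overline{x} + \tau(N,\psi)$, which is independent of $\theta$. Therefore the family of commutative diagrams supplied by \cref{theorem:heq-diag}, one for each admissible $\theta$, assembles into the single commutative diagram claimed in the statement. There is no real obstacle here; the only substantive content is the observation that $\tau(M,\varphi)=0$ decouples the formula from the choice of $\theta$, which is precisely why the ``simple'' case admits such a clean, $\theta$-free formulation.
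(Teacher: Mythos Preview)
Your proposal is correct and follows essentially the same approach as the paper: partition $\hEq(M,N)$ according to the induced $\pi_1$-isomorphism $\theta$, apply \cref{theorem:heq-diag} on each piece, and use $\tau(M,\varphi)=0$ to make the right-hand vertical map independent of $\theta$ so the diagrams assemble. You include a bit more detail than the paper (the automatic compatibility $w\circ\theta=w_M$ and the fact that restriction respects the $\theta$-decomposition), but the argument is the same.
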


\begin{proof}
We have $\hEq(M,N) = \bigsqcup_{\theta} \hEq_{\theta}(M,N)$, where the union ranges over all isomorphisms $\theta \colon \pi_1(M) \rightarrow G$ with $w \circ \theta = w_M$, where $w_M$ is the orientation character of $M$. For each such $\theta$ we can apply \cref{theorem:heq-diag}, and since $\theta_*(\tau(M,\varphi))=0$, we can combine the resulting diagrams to get the diagram in the statement.
\end{proof}

The following is obtained by further specialising.  

\begin{theorem} \label{theorem:haut-simple}
Suppose that $M$ is an $n$-manifold, $K$ is a CW complex of dimension (at most) $k$, and $\varphi \colon K \to M$ is a continuous map such that $(M,\varphi)$ is an SP manifold. Let $G = \pi_1(M)$ with orientation character $w \colon G \rightarrow \left\{ \pm 1 \right\}$, and identify $\pi_1(K)$ with $G$ via $\varphi$. Suppose that $\tau(M,\varphi)=0$. Then there is a commutative diagram
\[
\hspace{5mm}
\xymatrix{
\hAut(M) \ar[r]^-{\tau} \ar[d] & \Wh(G,w) \\
\hAut(K) \ar[r]^-{\tau} & \Wh(G,w) \ar[u]_{x \mapsto x-(-1)^n\overline{x}}
}
\]
where the vertical map on the left is given by restriction. 
\end{theorem}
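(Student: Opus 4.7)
The plan is to deduce \cref{theorem:haut-simple} as a direct specialisation of \cref{thm:simple-doubles-first-application}. Specifically, I would apply that theorem to the pair $(N,\psi) := (M,\varphi)$ and $L := K$. Since $\hEq(M,M) = \hAut(M)$ and $\hEq(K,K) = \hAut(K)$, the general restriction map $\hEq(M,N) \to \hEq(K,L)$ from \cref{lem:heq-restr} specialises to a restriction map $\hAut(M) \to \hAut(K)$, which provides the left-hand vertical arrow.

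Next, I would simplify the formula appearing in the right-hand vertical arrow of \cref{thm:simple-doubles-first-application}. In that theorem the map is $x \mapsto x - (-1)^n \overline{x} + \tau(N,\psi)$. Substituting $(N,\psi) = (M,\varphi)$ turns the correction term into $\tau(M,\varphi)$, which is $0$ by hypothesis. Thus the map reduces precisely to $x \mapsto x - (-1)^n \overline{x}$, matching the statement.

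Finally, the identification of fundamental groups must be made consistent: in \cref{thm:simple-doubles-first-application} we identify $\pi_1(L)$ with $\pi_1(N)$ via $\psi_*$, so with $(N,\psi) = (M,\varphi)$ and $L = K$ this becomes the identification of $\pi_1(K)$ with $G = \pi_1(M)$ via $\varphi_*$, which is exactly the convention adopted in \cref{theorem:haut-simple}. Commutativity of the square then follows directly from the commutativity established in \cref{thm:simple-doubles-first-application}.

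There is no real obstacle here: the only thing to verify is that the specialisation is legitimate, i.e.\ that the restriction of a homotopy automorphism $f \in \hAut(M)$ is a homotopy automorphism $\alpha \in \hAut(K)$ (rather than merely a homotopy equivalence between two possibly different complexes). This is immediate from \cref{lem:heq-restr} once we take source and target of $f$ to coincide. Hence the proof reduces to a single sentence invoking \cref{thm:simple-doubles-first-application} with the substitutions described above.
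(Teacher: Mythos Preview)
Your proposal is correct and matches the paper's own proof essentially verbatim: the paper proves \cref{theorem:haut-simple} in a single line by applying \cref{thm:simple-doubles-first-application} with $M=N$, $K=L$, and $\varphi=\psi$, exactly as you describe.
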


\begin{proof}
Apply \cref{thm:simple-doubles-first-application} to the case where $M=N$, $K=L$, and $\varphi=\psi$. 
\end{proof}

In particular, this implies that $\tau(g) \in \mathcal{I}_n(G,w)$ for every $g \in \hAut(M)$, hence $T(M) \subseteq \mathcal{I}_n(G,w)$ and $U(M) = \{ 0 \}$ (see \cref{ss:shms}). We use this to prove \cref{thmx:doubles2}. 

\begin{theorem}[cf.\ Hausmann {\cite[Sections 9--10]{hausmann-mwmdh}}]\label{thm:hausmann}
Let $n \geq 5$, let $G$ be a finitely presented group and let $w \colon G \rightarrow \left\{ \pm 1 \right\}$ be such that $\psi \colon L_{n+1}^h(\Z G, w) \rightarrow \wh{H}^{n+1}(C_2;\Wh(G,w))$ is nontrivial $($see \cref{ss:shms}$)$. 
Then there exists an $n$-manifold $M$ with fundamental group $G$ and orientation character $w$ such that $|\M^h_{s,\hCob}(M)|>1$.
\end{theorem}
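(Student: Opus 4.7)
The plan is to construct $M$ as a trivial double over a $2$-complex realising the pair $(G,w)$, use Theorem \ref{theorem:haut-simple} to show that its simple homotopy rigidity forces $U(M) = \{0\}$, and then invoke Proposition \ref{prop:TU-cond}~\eqref{prop:TU-cond-b}.

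First, since $G$ is finitely presented, I would choose a finite $2$-complex $K$ with $\pi_1(K) \cong G$. Viewing $w$ as an element of $\Hom(G, \Z/2) \cong H^1(K; \Z/2)$, pick a stable vector bundle $\xi$ over $K$ with $w_1(\xi) = w$. By \cref{lem:thick-class} (together with \cref{rem:thick-5} when $n=5$), there is an $n$-dimensional thickening $f_T \colon K \to T$ with $f_T^*(\nu_T) \cong \xi$. Since $TT \oplus \nu_T$ is stably trivial, $w_1(TT) = w_1(\nu_T)$, so the orientation character of $T$ is $w$. Form the trivial double $M := T \cup_{\id_{\partial T}} T$ with canonical polarisation $\varphi \colon K \to M$. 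A Van Kampen argument gives $\pi_1(M) \cong G$ with orientation character $w$. With $k = 2$ we have $n \geq 5 = \max(k+3, 2k+1)$, so $(M, \varphi)$ satisfies (SP3) and is therefore an SP manifold.

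By \cref{cor:tau-triv-double}, $\tau(M, \varphi) = 0$, so \cref{theorem:haut-simple} applies: every $g \in \hAut(M)$ has $\tau(g) = x - (-1)^n \overline{x}$ for some $x \in \Wh(G, w)$, hence $T(M) \subseteq \mathcal{I}_n(G, w)$. Consequently $U(M) = \pi(T(M)) = \{0\}$ inside $\wh H^{n+1}(C_2; \Wh(G,w))$. Since $\psi$ is nontrivial by hypothesis, there exists $y \in \image(\psi)$ with $y \neq 0$, so $\image(\psi) \setminus U(M) \neq \emptyset$. \cref{prop:TU-cond}~\eqref{prop:TU-cond-b} then gives $|\M^h_{s, \hCob}(M)| > 1$.

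The only step that requires care is the construction in dimension $n = 5$, where one must invoke \cref{rem:thick-5} rather than the cleaner statement of \cref{lem:thick-class}; beyond that, everything follows from assembling \cref{cor:tau-triv-double}, \cref{theorem:haut-simple}, and \cref{prop:TU-cond}~\eqref{prop:TU-cond-b} in sequence, so no genuine obstacle remains once the bundle-theoretic setup of the thickening is in place.
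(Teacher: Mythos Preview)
Your proof is correct and follows essentially the same route as the paper's: build a trivial double over a finite $2$-complex realising $(G,w)$, use \cref{theorem:haut-simple} (via $\tau(M,\varphi)=0$) to deduce $U(M)=\{0\}$, and conclude with \cref{prop:TU-cond}\eqref{prop:TU-cond-b}. The only cosmetic point is that for $n=5$ the hypothesis $n\ge\max(6,2k+2)$ of \cref{cor:tau-triv-double} is not met, but since you already noted that $(M,\varphi)$ satisfies (SP3), the extended definition of $\tau(M,\varphi)$ given after \cref{def:SP} directly yields $\tau(M,\varphi)=0$ in that case too.
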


\begin{proof}
Let $K$ be a finite $2$-dimensional CW complex with $\pi_1(K) \cong G$ (and we fix an isomorphism). Let $\nu$ be a stable vector bundle over $K$ with orientation character $w$. By \cref{lem:thick-class} and \cref{rem:thick-5} there is an $n$-dimensional thickening $f_T \colon K \to T$ such that $f_T^*(\nu_T) \cong \nu$. Let $M = T \cup_g T$ be a twisted double of $T$ such that $(M,\varphi)$ is an SP manifold and $\tau(M,\varphi)=0$, where $\varphi$ is the composition of $f_T$ and the inclusion $T \to M$ of the first component (e.g.\ let $g = \id_{\partial T}$). Then $\pi_1(M) \cong G$ with orientation character $w$. 

It follows from \cref{theorem:haut-simple} that $U(M) = \{ 0 \}$. So $\image(\psi) \setminus U(M)$ is nonempty, and by \cref{prop:TU-cond} \eqref{prop:TU-cond-b} this implies that $|\M^h_{s,\hCob}(M)| > 1$. 
\end{proof}

\subsection{Doubles over manifolds} \label{ss:doubles-man}

We now consider the special case of \cref{theorem:heq-diag} when $K$ and $L$ are closed $k$-manifolds. We get an especially nice statement when $n-k$ is odd. Then we present some applications, proving Theorems \ref{thmx:sb} and \ref{thmx:doubles1}.

\begin{theorem} \label{theorem:haut-double-man}
Suppose that $M$ and $N$ are $n$-manifolds, $K$ and $L$ are $k$-manifolds and $\varphi \colon K \to M$ and $\psi \colon L \to N$ are continuous maps such that $(M,\varphi)$ and $(N,\psi)$ are SP manifolds. Let $F = \pi_1(M)$ and $G = \pi_1(N)$, and identify $\pi_1(K)$ with $F$ and $\pi_1(L)$ with $G$ via $\varphi$ and $\psi$ respectively. Let $w_M \colon F \rightarrow \left\{ \pm 1 \right\}$ be the orientation character of $M$ and let $w = w_N \colon G \rightarrow \left\{ \pm 1 \right\}$ be the orientation character of $N$. Assume that the orientation character of $L$ is also $w$. 
\benum 
\item\label{item:theorem:haut-double-man-a} If $n-k$ is odd, then $\tau(f)=\tau(N,\psi)-f_*(\tau(M,\varphi))$ for every homotopy equivalence $f \colon M \to N$. 

\item\label{item:theorem:haut-double-man-b} If $n-k$ is even, then for every isomorphism $\theta \colon F \rightarrow G$ with $w \circ \theta = w_M$ there is a commutative diagram
\[
\hspace{20mm}
\xymatrix{
\hEq_{\theta}(M,N) \ar[r]^-{\tau} \ar[d] & \Wh(G,w) \\
\hEq_{\theta}(K,L) \ar[r]^-{\tau} & \Wh(G,w) \ar[u]_{x \mapsto 2x+\tau(N,\psi)-\theta_*(\tau(M,\varphi))}
}
\]
\eenum
\end{theorem}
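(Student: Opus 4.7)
The plan is to reduce to \cref{theorem:heq-diag} and then simplify its ``error term'' $\tau(\alpha) - (-1)^n \overline{\tau(\alpha)}$ using the fact that the restriction $\alpha \colon K \to L$ is itself a homotopy equivalence of closed $k$-manifolds, so that Poincar\'e duality in dimension~$k$ constrains $\tau(\alpha)$ via \cref{prop:WT-man}.

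First I would fix an isomorphism $\theta \colon F \to G$ with $w \circ \theta = w_M$. For any homotopy equivalence $f \colon M \to N$ this condition is automatic for $\theta = \pi_1(f)$, because homotopy equivalences of manifolds preserve orientation characters. Then \cref{theorem:heq-diag} gives, for every $f \in \hEq_{\theta}(M,N)$ with restriction $\alpha$,
\[
\tau(f) \;=\; \tau(\alpha) - (-1)^n \overline{\tau(\alpha)} + \tau(N,\psi) - \theta_*(\tau(M,\varphi)) \in \Wh(G,w).
\]

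Next I apply \cref{prop:WT-man} to $\alpha \colon K \to L$: under the identifications $\pi_1(K) \cong F$, $\pi_1(L) \cong G$ via $\varphi$ and $\psi$, the hypothesis is that $L$ has orientation character $w$, so $\tau(\alpha) \in \mathcal{J}_k(G,w)$, i.e.\ $\overline{\tau(\alpha)} = (-1)^{k+1}\tau(\alpha)$. Substituting, the coefficient of $\tau(\alpha)$ in the formula becomes
\[
1 - (-1)^n(-1)^{k+1} \;=\; 1 + (-1)^{n+k}.
\]
Since $n-k$ and $n+k$ have the same parity, this coefficient equals $0$ when $n-k$ is odd and $2$ when $n-k$ is even.

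In case (b) this is precisely the commutativity of the stated diagram. In case (a) the $\tau(\alpha)$-term vanishes, so the formula reduces to $\tau(f) = \tau(N,\psi) - \theta_*(\tau(M,\varphi))$, which no longer mentions $\alpha$ or the choice of $\theta$; since $\theta = \pi_1(f)$ gives $\theta_*(\tau(M,\varphi)) = f_*(\tau(M,\varphi))$, letting $\theta$ range over all admissible choices (equivalently, $f$ over all of $\hEq(M,N) = \bigsqcup_\theta \hEq_\theta(M,N)$) yields the result for every homotopy equivalence $f$. There is no real obstacle: the argument is essentially bookkeeping of signs, with the only conceptual input being that $k$-dimensional Poincar\'e duality forces $\tau(\alpha)$ into $\mathcal{J}_k(G,w)$.
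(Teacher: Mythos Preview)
Your proposal is correct and follows essentially the same route as the paper: apply \cref{theorem:heq-diag} to obtain the formula with the $\tau(\alpha)-(-1)^n\overline{\tau(\alpha)}$ term, then invoke \cref{prop:WT-man} for the $k$-manifolds $K,L$ to get $\tau(\alpha)\in\mathcal{J}_k(G,w)$ and simplify the sign to $1+(-1)^{n-k}$. The paper's presentation is marginally more direct in that it starts from an arbitrary $f$ rather than fixing $\theta$ first, but the content is identical.
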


\begin{proof}
Let $f \colon M \to N$ be a homotopy equivalence. By \cref{lem:heq-restr} it restricts to a homotopy equivalence $\alpha \colon K \rightarrow L$. By \cref{theorem:heq-diag} we have $\tau(f) = \tau(\alpha)-(-1)^n\ol{\tau(\alpha)} +\tau(N,\psi)-f_*(\tau(M,\varphi)) \in \Wh(G,w)$. Since $K$ and $L$ are $k$-manifolds and the orientation character of $L$ is $w$, by \cref{prop:WT-man} we see that $\tau(\alpha) \in \mathcal{J}_k(G,w)$. That is, $\tau(\alpha)=-(-1)^k\ol{\tau(\alpha)} \in \Wh(G,w)$. Therefore \[\tau(f) = \tau(\alpha)-(-1)^n\ol{\tau(\alpha)} +\tau(N,\psi)-f_*(\tau(M,\varphi)) = \tau(\alpha)+(-1)^{n-k}\tau(\alpha) +\tau(N,\psi)-f_*(\tau(M,\varphi)).\qedhere\]
\end{proof}

We will use this to prove the following result from the introduction.

\begin{reptheorem}{thmx:sb}
Suppose that $j > k$ are positive integers and $j$ is odd. Let $K$ and $L$ be $k$-manifolds, and let $S^j \to M \to K$ and $S^j \to N \to L$ be orientable $($linear$)$ sphere bundles. Then every homotopy equivalence $f \colon M \to N$ is simple.
\end{reptheorem}

\begin{proof}
Let $n = j+k$ be the dimension of $M$ and $N$. It follows from the assumptions that $j \geq \max(3,k+1)$, so $n \geq \max(k+3,2k+1)$. 

The manifold $M$ is the sphere bundle of some orientable rank $(j+1)$ vector bundle $\xi$ over $K$. Since $j+1 > k$, there is a rank $j$ vector bundle $\xi_0$ such that $\xi \cong \xi_0 \oplus \varepsilon^1$, where $\varepsilon^1$ denotes the trivial rank $1$ bundle. Let $T$ be the disc bundle of $\xi_0$. Then $T$ is a thickening of $K$ (with the zero-section $K \rightarrow T$) and $M \approx T \cup_{\id_{\partial T}} T$. This means that if $\varphi$ denotes the composition $K \rightarrow T \rightarrow M$ of the zero section and the inclusion of the first component, then $(M,\varphi)$ has a trivial double structure, so it satisfies (SP3). Similarly, $(N,\psi)$ satisfies (SP3) for the analogous $\psi \colon L \rightarrow N$.

Let $G = \pi_1(N)$ and let $w \colon G \rightarrow \left\{ \pm 1 \right\}$ be the orientation character of $N$. Then $\pi_1(\psi)$ determines an identification $\pi_1(L) \cong G$, and since $\xi$ is orientable, $w$ is also the orientation character of $L$.

Since $M$ and $N$ are trivial doubles,  $\tau(M,\varphi) = 0$ and $\tau(N,\psi) = 0$ by \cref{cor:tau-triv-double}. Since $n-k=j$ is odd, \cref{theorem:haut-double-man}~\eqref{item:theorem:haut-double-man-a} implies that $\tau(f) = 0$ for every homotopy equivalence $f \colon M \to N$. 
\end{proof}

We obtain the following two theorems as immediate corollaries. 

\begin{theorem}
Suppose that $j > k$ are positive integers and $j$ is odd. Let $K$ and $L$ be $k$-manifolds, and let $S^j \to M \to K$ and $S^j \to N \to L$ be orientable sphere bundles. If $M$ and $N$ are homotopy equivalent, then they are simple homotopy equivalent.
\end{theorem}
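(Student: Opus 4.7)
The statement is essentially a reformulation of \cref{thmx:sb}, which asserts that \emph{every} homotopy equivalence $f \colon M \to N$ between such sphere bundles is simple. My plan is simply to unpack the definition of ``simple homotopy equivalent'' and invoke \cref{thmx:sb} directly.

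Concretely, suppose $M$ and $N$ are homotopy equivalent. Then by definition there exists some homotopy equivalence $f \colon M \to N$. The hypotheses of \cref{thmx:sb} are identical to those of the present statement (both $K, L$ are $k$-manifolds, $j > k$ with $j$ odd, and the sphere bundles $S^j \to M \to K$, $S^j \to N \to L$ are orientable, understood to be linear sphere bundles). Applying \cref{thmx:sb} to $f$ yields $\tau(f) = 0 \in \Wh(\pi_1(N))$, i.e.\ $f$ is a simple homotopy equivalence, so $M \simeq_s N$.

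There is no real obstacle: the only observation needed is that ``$M$ and $N$ are simple homotopy equivalent'' means precisely that \emph{some} homotopy equivalence between them has vanishing Whitehead torsion, whereas \cref{thmx:sb} gives the stronger conclusion that \emph{every} homotopy equivalence $M \to N$ has vanishing Whitehead torsion. Thus the corollary is a one-line deduction, which is presumably why the authors label it an immediate corollary and do not supply a separate proof.
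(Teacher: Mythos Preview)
The proposal is correct and matches the paper's approach exactly: the paper states this theorem as an immediate corollary of \cref{thmx:sb} and gives no separate proof, and your deduction is precisely the intended one-line argument.
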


\begin{theorem} \label{theorem:sb-tau-haut}
Suppose that $j > k$ are positive integers and $j$ is odd. Let $K$ be a $k$-manifold and let $S^j \to M \to K$ be an orientable sphere bundle. Then $T(M) = \{ 0 \}$ (see \cref{ss:shms}). 
\end{theorem}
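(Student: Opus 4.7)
The plan is to deduce this directly from \cref{thmx:sb} by specialising to the case $M = N$, $K = L$, and taking homotopy equivalences to be homotopy automorphisms. Recall that by definition
\[
T(M) = \left\{ \tau(g) \mid g \in \hAut(M) \right\} \subseteq \mathcal{J}_n(G,w),
\]
where $n = j+k$ and $G = \pi_1(M)$, so it suffices to show that every homotopy automorphism of $M$ has vanishing Whitehead torsion.

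First I would note that the hypotheses of \cref{thmx:sb} are symmetric in $(M,N)$, so we may take $N = M$ and $L = K$ (with the same orientable sphere bundle $S^j \to M \to K$ on both sides). Then \cref{thmx:sb} asserts that every homotopy equivalence $f \colon M \to M$ is simple, i.e. $\tau(f) = 0 \in \Wh(G,w)$. Applying this to an arbitrary $g \in \hAut(M)$ gives $\tau(g) = 0$, so $T(M) \subseteq \{0\}$; since $\id_M \in \hAut(M)$ contributes $0$ to $T(M)$, we conclude $T(M) = \{0\}$.

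There is essentially no obstacle here: the only thing to verify is that the hypotheses of \cref{thmx:sb} are indeed available, namely that $j > k$ with $j$ odd, that $K$ is a $k$-manifold, and that the sphere bundle is orientable. All of these are given in the statement of \cref{theorem:sb-tau-haut}. Hence the result follows immediately, and no further argument is needed.
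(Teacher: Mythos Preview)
Your proof is correct and takes essentially the same approach as the paper: the paper states this result as an immediate corollary of \cref{thmx:sb}, which is exactly what you do by specialising to $N=M$ and $L=K$.
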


\begin{remark}
In the three theorems above, the sphere bundle $M$ could be replaced with any twisted double $T \cup_g T$ of the disc bundle $T$ of an orientable rank $j$ bundle $\xi_0$ over $K$ such that $(M,\varphi)$ is an SP manifold and $\tau(M,\varphi)=0$, where $\varphi$ is the composition of the zero section $K \rightarrow T$ and the inclusion $T \to M$ of the first component (and similarly for $N$).
\end{remark}

Finally, we will use \cref{theorem:sb-tau-haut} to prove the following result from the introduction.

\begin{reptheorem}{thmx:doubles1}
Let $n \geq 11$ or $n=9$. Let $G$ be a finitely presented group with an orientation character $w \colon G \rightarrow \left\{ \pm 1 \right\}$. Then there is an $n$-manifold $M$ with fundamental group $G$ and orientation character $w$ such that $|\M^{\hCob}_s(M)| > 1$ if and only if $\mathcal{I}_n(G,w) \ne 0$. 
\end{reptheorem}

\begin{proof}
First assume that $\mathcal{I}_n(G,w) \ne 0$. Under the assumptions on $n$ there is an integer $k \geq 4$ such that $n-k > k$ and $n-k$ is odd. For instance, we can take $k=4$ for $n \geq 9$ odd and $k=5$ for $n \geq 12$ even.  Since $k \geq 4$, there is a $k$-manifold $K$ with $\pi_1(K) \cong G$ and orientation character $w$. Let $M$ be an orientable $S^{n-k}$-bundle over $K$ (e.g.\ $K \times S^{n-k}$). Then $\pi_1(M) \cong G$ with orientation character $w$, and by \cref{theorem:sb-tau-haut}, it follows that $T(M) = \{ 0 \}$. So $\mathcal{I}_n(G,w) \setminus T(M)$ is nonempty, and by \cref{prop:TU-cond} \eqref{prop:TU-cond-a} this implies that $|\M^{\hCob}_s(M)| > 1$. 

In the other direction, if $f$ is the homotopy equivalence induced by an $h$-cobordism between $n$-manifolds with fundamental group $G$ and orientation character $w$, then $\tau(f) \in \mathcal{I}_n(G,w)$ by \cref{prop:WT-hcob}. If the manifolds are not simple homotopy equivalent, then $\tau(f) \neq 0$, so $\mathcal{I}_n(G,w) \ne 0$.
\end{proof}

\subsection{Doubles over certain $2$-complexes} \label{ss:doubles-2c}

In this section we will consider specific $2$-dimensional CW complexes for which it is known by work of Metzler~\cite{Me79} that the Whitehead torsions of their homotopy automorphisms are contained in a certain subgroup of the Whitehead group. We can exploit this property by applying \cref{theorem:haut-simple} to doubles over such $2$-complexes, leading to a proof of \cref{thmx:metzler}.

Recall that for any group presentation $\mathcal{P} = \langle g_1, \ldots , g_s \mid r_1, \ldots , r_t \rangle$ there is a corresponding presentation complex, denoted by $X_{\mathcal{P}}$, which consists of one $0$-cell, one $1$-cell for each generator $g_i$, and one $2$-cell for each relation $r_j$, with gluing map determined by $r_j$.

In this section all groups will be equipped with the trivial orientation character $w \equiv 1$, which we will omit from the notation. 

We will need the Bass-Heller-Swan decomposition of the Whitehead group of $C_\infty \times C_m$: 
\begin{equation} \label{eq:BHS}
\Wh(C_\infty \times C_m) \xrightarrow{\cong} \Wh(C_m) \oplus \wt K_0(\Z C_m) \oplus NK_1(\Z C_m)^2.
\end{equation}
In \eqref{eq:BHS} each term is equipped with a natural involution, and the isomorphism respects these involutions (see \cite[p.329, p.357]{Ra86}). 

The following is a generalisation of the main result of \cite{Me79}.

\begin{theorem} \label{theorem:Metzler}
Let $m \geq 1$ and let $X := X_{\mathcal{P}}$, where $\mathcal{P} = \langle x, y \mid y^m, [x,y] \rangle$ is the standard presentation for $C_{\infty} \times C_m$. Then the composition
\[ \hAut(X) \xrightarrow[]{\tau} \Wh(C_{\infty} \times C_m) \twoheadrightarrow  
\wt K_0(\Z C_m) \]
is the zero map. In particular, $\tau(\hAut(X)) \subseteq \Wh(C_m) \oplus \{0\} \oplus NK_1(\Z C_m)^2 \subseteq \Wh(C_\infty \times C_m)$.
\end{theorem}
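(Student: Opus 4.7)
The strategy is to adapt Metzler's Fox-calculus argument \cite{Me79} (which handles the case $\pi_1(f) = \id_G$) to cover general $\phi \in \Aut(G)$ via a naturality argument. Since $C_m$ is the torsion subgroup of $G = C_\infty \times C_m$, every $\phi \in \Aut(G)$ restricts to $\phi_m \in \Aut(C_m)$ and descends to $\pm\id$ on $C_\infty$. The Bass--Heller--Swan projection $\pi_{\wt K_0} \colon \Wh(G) \twoheadrightarrow \wt K_0(\Z C_m)$ is compatible with this structure: for the induced automorphism $\phi_*$ of $\Wh(G)$, we have $\pi_{\wt K_0} \circ \phi_* = (\phi_m)_* \circ \pi_{\wt K_0}$, because the transfer $\wt K_0(\Z C_m) \hookrightarrow \Wh(G)$ is natural in automorphisms of $G$ preserving the canonical subgroup $C_m \subset G$.

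Given $f \in \hAut(X)$ with $\pi_1(f) = \phi$, \cref{prop:WT-composition} gives
\[
\tau(f) = \tau(g_\phi) + (g_\phi)_*(\tau(g_\phi^{-1} \circ f))
\]
for any $g_\phi \in \hAut(X)$ with $\pi_1(g_\phi) = \phi$. Since $g_\phi^{-1} \circ f$ induces the identity on $\pi_1(X)$, Metzler's argument (via Fox calculus applied to $C_*(X;\Z G)$) yields $\pi_{\wt K_0}(\tau(g_\phi^{-1} \circ f)) = 0$, and by the compatibility above $\pi_{\wt K_0}((g_\phi)_*(\tau(g_\phi^{-1} \circ f))) = (\phi_m)_*(0) = 0$. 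The problem thus reduces to constructing, for each $\phi$ in a generating set of $\Aut(G)$, a representative $g_\phi \in \hAut(X)$ whose Whitehead torsion has vanishing $\wt K_0$-component.

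The group $\Aut(G)$ is generated by $y \mapsto y^s$ for $s \in (\Z/m\Z)^\times$, $x \mapsto x^{\pm 1}$, and the shears $x \mapsto x y^k$ for $k \in \Z/m\Z$. Using the identification of $X$ with the $2$-skeleton of $Y_m \times S^1$, where $Y_m$ is the presentation complex of $\langle y \mid y^m \rangle$, one constructs a cellular realization $g_\phi$ of each generator: when $\phi$ acts as $\pm\id$ on both factors one obtains a self-homeomorphism (so $\tau(g_\phi) = 0$ by \cref{thm:chapman}), while for the remaining generators $g_\phi$ is built directly. In each case the induced chain self-map on $C_*(X;\Z G)$ can be written down via Fox calculus, with $\partial_2 = \bsm 0 & 1-y \\ N_y & t-1 \esm$ (setting $t := x$ and $N_y = 1 + y + \cdots + y^{m-1}$), and a direct computation confirms that $\tau(g_\phi)$ has vanishing $\wt K_0$-component.

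\textbf{The main obstacle} is the explicit Fox-calculus verification: for each generator of $\Aut(G)$, showing that the associated chain self-map on $C_*(X;\Z G)$ has Whitehead torsion with zero Bass--Heller--Swan projection to $\wt K_0(\Z C_m)$. The shear $x \mapsto x y^k$ is the most delicate case, as it mixes the $x$- and $y$-cells of $X$ in a way that does not arise from any product structure on $Y_m \times S^1$. The base case $\phi = \id$ is essentially Metzler's original argument \cite{Me79}, though it must be strengthened to extract the vanishing of $\pi_{\wt K_0}(\tau)$ rather than the mere non-surjectivity of $\tau$.
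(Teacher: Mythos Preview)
Your approach differs substantially from the paper's, and contains a genuine gap that you yourself flag. You propose to reduce to the case $\pi_1(f) = \id$ via naturality and a case-by-case analysis over generators of $\Aut(G)$, then invoke Metzler for the base case. But as you note at the end, Metzler's original result only gives vanishing after pushing forward along $\Psi_* \colon \Wh(G) \to K_1(\Z G/\Sigma)/(\Z G/\Sigma)^\times$; it does not directly give $\pi_{\wt K_0}(\tau(f)) = 0$. Strengthening this is the heart of the matter, and your proposal does not say how to do it---your Fox-calculus casework over $\Aut(G)$ is orthogonal to this point and does not help with the base case.

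The paper's argument supplies exactly this missing step, and in a way that makes your reduction to $\phi = \id$ superfluous. Metzler's Lemma~2 already applies to all of $\hAut(X)$ (not only the subgroup with $\pi_1(f) = \id$), giving $\Psi_*(\tau(f)) = 0$ for every $f$. The paper then observes that $\Psi_*$ respects the Bass--Heller--Swan splittings on both sides, so in particular the image of $\pi_{\wt K_0}(\tau(f))$ in $\wt K_0(\Z C_m/\Sigma)$ vanishes. The key new ingredient is that $\psi_* \colon \wt K_0(\Z C_m) \to \wt K_0(\Z C_m/\Sigma)$ is \emph{injective}; this follows from the Mayer--Vietoris sequence of the Milnor square for $\Z C_m$ (with corners $\Z$, $\Z C_m/\Sigma$, $\Z/m$) together with the vanishing of the Swan subgroup $T(C_m) \subseteq \wt K_0(\Z C_m)$. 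This single algebraic fact replaces all of your proposed casework and simultaneously handles arbitrary $\phi \in \Aut(G)$.
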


\begin{proof}
Let $G = C_{\infty} \times C_m$ and let  $\psi \colon \Z C_m \twoheadrightarrow \Z C_m/\Sigma$ be the quotient map, factoring out by the ideal generated by $\Sigma := \sum_{i=0}^{m-1} y^i$, the group norm in $\Z C_m$. Since $\Z G \cong (\Z C_m)[C_\infty]$ and $\Z G/\Sigma \cong (\Z C_m/\Sigma)[C_\infty]$, $\psi$ induces a map $\Psi \colon \Z G \twoheadrightarrow \Z G/\Sigma$. Metzler showed~\cite[Lemma 2]{Me79} that the composition 
\[ \hAut(X) \xrightarrow[]{\tau} \Wh(G) \cong K_1(\Z G)/{\pm G} \xrightarrow[]{\Psi_*} K_1(\Z G/\Sigma)/(\Z G/\Sigma)^\times \]
is the zero map. Since $\Z G/\Sigma \cong (\Z C_m/\Sigma)[t,t^{-1}]$, we set $R=  \Z C_m/\Sigma$, so that $\Z G/\Sigma \cong R[t,t^{-1}]$.  A variant of the Bass-Heller-Swan decomposition for arbitrary Laurent polynomial rings $R[t,t^{-1}]$~\cite[III.3.6]{We13} implies that
\[ K_1(\Z G/\Sigma)/(\Z G/\Sigma)^\times \cong (K_1(\Z C_m/\Sigma)/(\Z C_m/\Sigma)^\times) \oplus \wt K_0(\Z C_m/\Sigma) \oplus NK_1(\Z C_m/\Sigma)^2. \]
The map $\Psi_*$ respects this splitting and so we have that:
\begin{align*} 
\tau(\hAut(X)) \subseteq &\ker(\psi_* \colon \Wh(C_m) \to K_1(\Z G/\Sigma)/(\Z G/\Sigma)^\times) \\
&\quad \oplus \ker(\psi_* \colon \wt K_0(\Z C_m) \to \wt K_0(\Z C_m/\Sigma)) \oplus \ker(\psi_* \colon NK_1(\Z C_m) \to NK_1(\Z C_m/\Sigma))^2.
\end{align*}

It therefore suffices to prove that $\psi_* \colon \wt K_0(\Z C_m) \to \wt K_0(\Z C_m/\Sigma)$ is injective. To see this, consider the following pullback square of rings
\begin{equation}\label{eqn:Milnor-square}
\begin{tikzcd}
\Z C_m \ar[r,"\psi"] \ar[d,"\varepsilon"] & \Z C_m / \Sigma \ar[d,"\varepsilon'"] \\
\Z \ar[r,"\psi'"] & \Z/m
\end{tikzcd}
\end{equation} 
where $\psi, \psi'$ are the quotient maps and $\varepsilon,\varepsilon'$ are induced by augmentation. 
Note that \eqref{eqn:Milnor-square} has the property that at least one of the maps $\psi'$ and $\varepsilon'$ is surjective, i.e.\ \eqref{eqn:Milnor-square} is a Milnor square.
It follows from \cite[Theorem 42.13]{CR87} that \eqref{eqn:Milnor-square} induces a long exact sequence:
\[ K_1(\Z) \oplus K_1(\Z C_m/ \Sigma) \xrightarrow[]{(\psi'_*,\varepsilon'_*)} K_1(\Z/m) \xrightarrow[]{\partial} \wt K_0(\Z C_m) \xrightarrow[]{(\epsilon_*,\psi_*)} \wt K_0(\Z) \oplus \wt K_0(\Z C_m / \Sigma). \]
More specifically, by \cite[Theorem 42.13]{CR87}, we obtain an exact sequence with $\wt K_0$ replaced by $K_0$ throughout. It is clear from the definition of $\partial$ that its image lies in $\wt K_0(\Z C_m)$ and, from this, we obtain the exact sequence above.

By \cite[p.~343]{CR87}, we have $\im(\partial) = T(C_m) \subseteq \wt K_0(\Z C_m)$ where $T(G) \subseteq \wt K_0(\Z G)$ denotes the Swan subgroup of a finite group $G$. By \cite[Proposition 53.6~(iii)]{CR87}, we have $T(C_m) = 0$ and so $\partial = 0$.
Since $\Z$ is a PID, we have $\wt K_0(\Z) = 0$. Hence $\psi_* \colon \wt K_0(\Z C_m) \to \wt K_0(\Z C_m/\Sigma)$ is injective. It follows that the composition in the statement of the theorem is the zero map.
\end{proof}

This implies that the map $\tau\colon \hAut(X) \to \Wh(C_{\infty} \times C_m)$ is not surjective when $\wt K_0(\Z C_m) \ne 0$, which is a broad generalisation of \cite[Theorem 1]{Me79}.

\begin{corollary} \label{cor:metzler}
Let $n \geq 5$ and $m \geq 1$. Suppose that $M$ is an $n$-manifold and $\varphi \colon X_{\mathcal{P}} \rightarrow M$ is a continuous map such that $(M,\varphi)$ is an SP manifold and $\tau(M,\varphi)=0$, where $\mathcal{P} = \langle x, y \mid y^m, [x,y] \rangle$ is the standard presentation for $C_{\infty} \times C_m$. Then the composition
\[ 
\hAut(M) \xrightarrow[]{\tau} \Wh(C_{\infty} \times C_m) \twoheadrightarrow \wt K_0(\Z C_m)
\]
is the zero map. 
\end{corollary}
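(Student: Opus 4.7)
The plan is to combine Theorem \ref{theorem:haut-simple} with Theorem \ref{theorem:Metzler}, using the fact that the Bass--Heller--Swan decomposition \eqref{eq:BHS} is compatible with the involution on $\Wh(C_\infty \times C_m)$.

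First, I apply Theorem \ref{theorem:haut-simple} with $K = X_{\mathcal{P}}$, $G = \pi_1(M) \cong C_\infty \times C_m$ (the identification is via $\varphi_*$, which is an isomorphism since $\varphi$ is $\lfloor n/2 \rfloor$-connected by \cref{prop:double-conn}), and orientation character $w$ equal to that of $M$. Because $\tau(M,\varphi) = 0$ by hypothesis, the theorem yields that for every $f \in \hAut(M)$, its restriction $\alpha \in \hAut(X_{\mathcal{P}})$ satisfies
\[
\tau(f) = \tau(\alpha) - (-1)^n \overline{\tau(\alpha)} \in \Wh(C_\infty \times C_m, w).
\]

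Next, I invoke Theorem \ref{theorem:Metzler}, which gives $\tau(\alpha) \in H$, where $H := \Wh(C_m) \oplus \{0\} \oplus NK_1(\Z C_m)^2 \subseteq \Wh(C_\infty \times C_m)$ is the sum of the first and third summands in the Bass--Heller--Swan decomposition. Note that the statement of Theorem \ref{theorem:Metzler} produces a containment in $H$ at the level of underlying abelian groups, independent of any involution, so it applies even though $w$ may be nontrivial.

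The key step is then to observe that $H$ is invariant under the involution $x \mapsto \overline{x}$ associated to $w$. This follows because the Bass--Heller--Swan isomorphism \eqref{eq:BHS} respects the natural involutions on each summand (as recalled in the paper, referring to \cite{Ra86}), so the involution on $\Wh(C_\infty \times C_m, w)$ preserves each summand individually, and in particular sends $H$ to itself. Consequently $\tau(f) = \tau(\alpha) - (-1)^n \overline{\tau(\alpha)} \in H$ as well, and since the projection $\Wh(C_\infty \times C_m) \twoheadrightarrow \wt K_0(\Z C_m)$ vanishes on $H$ by construction, the composition in the statement of the corollary is zero. The only nontrivial input is the compatibility of the involution with the Bass--Heller--Swan splitting, which is a standard fact taken from the literature.
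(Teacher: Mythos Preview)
Your proof is correct and follows essentially the same approach as the paper: both combine \cref{theorem:haut-simple} with \cref{theorem:Metzler} via the compatibility of the Bass--Heller--Swan decomposition with the involution. The paper packages this as a commutative two-square diagram, while you unpack it element by element, but the substance is identical. One small remark: the section in which this corollary appears stipulates $w \equiv 1$ throughout, so your aside about $w$ possibly being nontrivial is unnecessary; also, your phrase ``preserves each summand individually'' is slightly imprecise for the $NK_1(\Z C_m)^2$ factor (the involution swaps the two copies), but since your $H$ contains the full $NK_1(\Z C_m)^2$ this does not affect the argument.
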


\begin{proof}
Consider the diagram
\[
\begin{tikzcd}
\hAut(M) \ar[r,"\tau"] \ar[d] & \Wh(C_{\infty} \times C_m) \ar[r,twoheadrightarrow] & \wt K_0(\Z C_m) \\
\hAut(X_{\mathcal{P}}) \ar[r,"\tau"] & \Wh(C_{\infty} \times C_m) \ar[r,twoheadrightarrow] \ar[u,"x \mapsto x-(-1)^n\overline{x}"'] & \wt K_0(\Z C_m) \ar[u,"x \mapsto x-(-1)^n\overline{x}"']
\end{tikzcd}
\]
where the vertical map on the left is given by restriction. The first square commutes by \cref{theorem:haut-simple}. The second square commutes, because the isomorphism \eqref{eq:BHS} is compatible with the involutions. Since the composition of the maps in the bottom row vanishes by \cref{theorem:Metzler}, the commutativity of the diagram implies that the composition of the maps in the top row vanishes too.
\end{proof}

\begin{remark}
By the first square of the diagram, we also have $T(M) \subseteq \mathcal{I}_n(C_{\infty} \times C_m)$. By \cite[Proposition 5.10]{paper1} $\mathcal{I}_n(C_{\infty} \times C_m) \cong \mathcal{I}_n(C_m) \oplus \{ x - (-1)^n\ol{x} \mid x \in \wt{K}_0(\Z C_m)\} \oplus NK_1(\Z C_m)$, so \cref{cor:metzler} implies that $T(M) \subseteq \mathcal{I}_n(C_m) \oplus \{0\} \oplus NK_1(\Z C_m)$, where $NK_1(\Z G)$ is embedded into $NK_1(\Z G)^2$ by the map $x \mapsto (x,-(-1)^n\ol{x})$. 
\end{remark}

\begin{theorem} \label{theorem:metzler-doubles}
Let $n \geq 5$ and let $m \ge 2$ be such that $\{x-(-1)^n \ol{x} \mid x \in \wt K_0(\Z C_m)\} \ne 0$. Then there is an orientable $n$-manifold $M$ with fundamental group $C_{\infty} \times C_m$ such that $|\M^{\hCob}_s(M)| > 1$.
\end{theorem}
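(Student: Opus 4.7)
The plan is to construct $M$ as a trivial double of a thickening of the standard presentation complex $X_{\mathcal{P}}$ for $\mathcal{P} = \langle x, y \mid y^m, [x,y] \rangle$, and then exploit the constraint on $T(M)$ coming from \cref{cor:metzler} together with the Bass-Heller-Swan decomposition of $\mathcal{I}_n(C_\infty \times C_m)$ to exhibit an element of $\mathcal{I}_n(C_\infty \times C_m) \setminus T(M)$.

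First I would produce an $n$-dimensional thickening $f_T \colon X_{\mathcal{P}} \to T$ whose stable normal bundle is trivial. Such a thickening exists for every $n \geq 5 = \max(k+3, 2k+1)$ (with $k = 2$) by \cref{lem:thick-class} and \cref{rem:thick-5}. Since the stable normal bundle is trivial, $T$ is orientable. Taking $M := T \cup_{\id_{\partial T}} T$ with polarisation $\varphi := i_1 \circ f_T \colon X_{\mathcal{P}} \to M$ (the composition of $f_T$ with the inclusion of the first copy) then yields an orientable $n$-manifold with $\pi_1(M) \cong C_\infty \times C_m$ and trivial orientation character. Since $n \geq \max(k+3, 2k+1) = 5$, the pair $(M,\varphi)$ satisfies (SP3), hence is an SP manifold, and \cref{cor:tau-triv-double} gives $\tau(M,\varphi) = 0$. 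This places $(M,\varphi)$ in exactly the setting required by \cref{cor:metzler}.

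Now combining \cref{cor:metzler} with the remark following it, together with the splitting
\[ \mathcal{I}_n(C_\infty \times C_m) \cong \mathcal{I}_n(C_m) \oplus \{x - (-1)^n\overline{x} \mid x \in \wt K_0(\Z C_m)\} \oplus NK_1(\Z C_m) \]
(Proposition 5.10 of \cite{paper1}), I obtain
\[ T(M) \subseteq \mathcal{I}_n(C_m) \oplus \{0\} \oplus NK_1(\Z C_m) \subseteq \mathcal{I}_n(C_\infty \times C_m). \]
By hypothesis the middle summand $\{x - (-1)^n\overline{x} \mid x \in \wt K_0(\Z C_m)\}$ is nonzero, so any element with a nonzero middle component lies in $\mathcal{I}_n(C_\infty \times C_m) \setminus T(M)$. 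Applying \cref{prop:TU-cond}~\eqref{prop:TU-cond-a} then gives $|\M^{\hCob}_s(M)| > 1$, as required.

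The substantive mathematics is already packaged into \cref{cor:metzler} (which in turn rests on \cref{theorem:Metzler}, the generalisation of Metzler's vanishing statement, together with the fact that the Bass-Heller-Swan decomposition respects involutions). Consequently the argument here is essentially an assembly step, and the only point that needs care is verifying the hypotheses of \cref{cor:metzler} in the low-dimensional range $n \in \{5,6\}$; but (SP3) is precisely tailored to accommodate $n = 2k+1 = 5$, and orientability is free from the trivial stable normal bundle, so no genuine obstacle arises.
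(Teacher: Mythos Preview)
Your proof is correct and follows essentially the same route as the paper: build $M$ as a trivial double of an oriented thickening of $X_{\mathcal{P}}$, invoke \cref{cor:metzler} and \cite[Proposition~5.10]{paper1} to confine $T(M)$ to the summands missing $\{x-(-1)^n\ol{x}\mid x\in\wt K_0(\Z C_m)\}$, and conclude via \cref{prop:TU-cond}~\eqref{prop:TU-cond-a}. One small citation point: for $n=5$ the vanishing $\tau(M,\varphi)=0$ comes not from \cref{cor:tau-triv-double} (which needs $n\geq\max(6,2k+2)$) but from the extended definition of $\tau(M,\varphi)$ stated immediately after \cref{def:SP}, which sets $\tau(M,\varphi)=0$ whenever (SP3) holds.
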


\begin{proof}
Let $\mathcal{P} = \langle x, y \mid y^m, [x,y] \rangle$, and let $T$ be an oriented thickening of $X_{\mathcal{P}}$ (e.g.\ a regular neighbourhood of an embedding $X_{\mathcal{P}} \rightarrow \R^n$). Let $M = T \cup_g T$ be a twisted double of $T$ such that $(M,\varphi)$ is an SP manifold and $\tau(M,\varphi)=0$, where $\varphi \colon K \to M$ denotes the composition of $f_T$ and the inclusion $T \to M$ of the first component (e.g.\ let $g = \id_{\partial T}$). Then $\pi_1(M) \cong \pi_1(X_{\mathcal{P}}) \cong C_{\infty} \times C_m$ and $M$ is orientable. 

By \cref{prop:TU-cond} \eqref{prop:TU-cond-a} it is enough to show that $\mathcal{I}_n(C_{\infty} \times C_m) \setminus T(M)$ is nonempty. It follows from \cite[Proposition 5.10]{paper1} and \cref{cor:metzler} that $\mathcal{I}_n(C_{\infty} \times C_m) \setminus T(M)$ contains the subset $\{x-(-1)^n \ol{x} \mid x \in \wt K_0(\Z C_m)\} \setminus \{ 0 \}$, which is nonempty by our assumption.
\end{proof}

\subsection{The dependence of $\tau(M,\varphi)$ on $\varphi$} \label{ss:depend}

\cref{theorem:heq-diag} allows us to describe the set of possible values of $\tau(M,\varphi)$ for a fixed $M$.

\begin{proposition} \label{prop:tau-double-j}
Suppose that $n \geq \max(6,2k+2)$, $M$ is an $n$-manifold and $G = \pi_1(M)$ with orientation character $w \colon G \rightarrow \left\{ \pm 1 \right\}$. Let $\varphi \colon K \rightarrow M$ be an $\left\lfloor \frac{n}{2} \right\rfloor$-connected map for some CW complex $K$ of dimension $k$. Then $\tau(M,\varphi) \in \mathcal{J}_n(G,w)$.
\end{proposition}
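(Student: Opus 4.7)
The strategy is to compare $(M,\varphi)$ to a reference trivial double built from a thickening of $K$ with the same stable normal data, and then extract the claim from \cref{theorem:SP-manifolds-and-tau-intro} together with \cref{prop:WT-man}. The argument is cleanest when $(M,\varphi)$ is split polarised, which is automatic for $n\ge 7$ via~(SP1); the corner case $n=6$ needs a separate treatment.

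\textbf{Construction of the reference double.} First I would use \cref{lem:thick-emb} to pick a thickening $f_T\colon K\to T$ and an embedding $i\colon T\to M$ with $i\circ f_T\simeq\varphi$, and set $M':=T\cup_{\id_{\partial T}}T$ with canonical polarisation $\varphi':=i_1\circ f_T$. Since $(M',\varphi')$ has a trivial double structure, it is split polarised by~(SP3) and \cref{cor:tau-triv-double} gives $\tau(M',\varphi')=0$. By \cref{prop:double-conn}, $(M,\varphi)$ admits a generalised double structure $M=T\cup_{g_0}W\cup_{g_1}T$ built with the same thickening $T$ (via the uniqueness clause of \cref{lem:thick-emb}); for $n\ge 7$ this makes $(M,\varphi)$ satisfy~(SP1). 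Because $W$ is an $h$-cobordism, I would choose a homotopy equivalence $\partial T\times I\to W$ extending the boundary identifications $g_0,g_1$, insert a collar in $M'$, and define a homotopy equivalence $f\colon M'\to M$ which is the identity on each copy of $T$. By construction $f\circ\varphi'\simeq i\circ f_T\simeq\varphi$.

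\textbf{Applying the formula.} Now apply \cref{theorem:SP-manifolds-and-tau-intro} to $f$. Its restriction $\alpha\colon K\to K$ is characterised (as in \cref{rem:heq-restr}) by $\varphi\circ\alpha\simeq f\circ\varphi'\simeq\varphi$, so by \cref{lem:hom-restr} we have $\alpha\simeq\id_K$ and hence $\tau(\alpha)=0$. Combined with $\tau(M',\varphi')=0$, the formula of \cref{theorem:SP-manifolds-and-tau-intro} collapses to $\tau(f)=\tau(M,\varphi)$. Finally, \cref{prop:WT-man} yields $\tau(f)\in\mathcal{J}_n(G,w)$, which gives $\tau(M,\varphi)\in\mathcal{J}_n(G,w)$.

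\textbf{Main obstacle.} The delicate point is the boundary dimension $n=6$: the generalised double structure produced by \cref{prop:double-conn} may involve a nontrivial $h$-cobordism $W$, in which case $(M,\varphi)$ need not satisfy any of~(SP1)--(SP3), and \cref{theorem:SP-manifolds-and-tau-intro} cannot be invoked directly for the comparison $f\colon M'\to M$. I would handle this by stabilising, for instance by passing to $(M\times S^1,\varphi\times\ast)$, which is a $7$-dimensional split polarised manifold, carrying out the argument there, and then descending the conclusion to $\Wh(G,w)$ using the direct summand $\Wh(G,w)\hookrightarrow\Wh(G\times C_\infty,w)$ from the Bass--Heller--Swan decomposition, which is compatible with the involutions (up to the expected sign shift from $n$ to $n+1$).
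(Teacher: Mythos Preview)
Your construction of the comparison map $f\colon M'\to M$ breaks down. You assert the existence of a homotopy equivalence $e\colon\partial T\times I\to W$ with $e_0=g_0^{-1}$ and $e_1=g_1$, but such an $e$ is precisely a homotopy in $W$ between $\iota_0\circ g_0^{-1}$ and $\iota_1\circ g_1$. These two maps $\partial T\to W$ differ by the self-equivalence $d:=g_0\circ d_0\circ g_1\in\hAut(\partial T)$, where $d_0\colon\partial_1 W\to\partial_0 W$ is the $h$-cobordism-induced equivalence; so $e$ exists only when $d\simeq\id_{\partial T}$. But $\tau(d)=\tau(W,\partial_1 W)-(-1)^{n-1}\overline{\tau(W,\partial_1 W)}$ by \cref{prop:WT-hcob}, which is nonzero whenever the $h$-cobordism has nontrivial torsion, so in general $d\not\simeq\id_{\partial T}$ and your $f$ is not defined. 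More fundamentally, there is no reason the trivial double $M'$ should be homotopy equivalent to $M$ at all, so the strategy of comparing to a reference trivial double cannot be made to work without further input. Your $n=6$ fix is also problematic: the map $\varphi\times\ast\colon K\to M\times S^1$ is not even $\pi_1$-surjective (it misses the $C_\infty$ factor), so $(M\times S^1,\varphi\times\ast)$ is not an SP manifold.

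The paper circumvents all of this by never leaving $M$. It works instead with the $(n-1)$-manifold $\partial T$, which by \cref{rem:dt-sp} is always an SP manifold via~(SP3), and applies \cref{theorem:heq-diag} to the self-equivalence $d\in\hAut(\partial T)$ itself. The restriction of $d$ to $K$ is exactly the $\alpha$ appearing in \cref{prop:tau-double-comp}, so the formula there gives $\tau(M,\varphi)=(h\circ i_2)_*\tau(W,\partial_1 W)-\varphi_*\tau(\alpha)$; the paper then checks directly, using the two duality relations for $\tau(W,\partial_1 W)$ and $\tau(\alpha)$ coming from \cref{prop:WT-hcob} and \cref{theorem:heq-diag} in dimension $n-1$, that this combination lies in $\mathcal{J}_n(G,w)$. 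This argument is uniform in $n\ge\max(6,2k+2)$ and needs no separate treatment of $n=6$.
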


\begin{proof}
By \cref{prop:double-conn} there is a generalised double structure $h \colon T \cup_{g_0} W \cup_{g_1} T \rightarrow M$ on $(M,\varphi)$. Let $i_1$, $i_2$, and $i_3$ denote the inclusions of the three components of $T \cup_{g_0} W \cup_{g_1} T$. Let $d_0 \colon \partial_1 W \rightarrow \partial_0 W$ be the composition of the inclusion $\partial_1 W \rightarrow W$ and the homotopy inverse of $\partial_0 W \rightarrow W$. By the construction of $d_0$, we have $i_2 \big| _{\partial_1 W} \simeq i_2 \big| _{\partial_0 W} \circ d_0 \colon \partial_1 W \rightarrow T \cup_{g_0} W \cup_{g_1} T$. Hence, with the notation $d \coloneqq g_0 \circ d_0 \circ g_1 \in \hAut(\partial T)$, and using that $g_1$ and $g_0$ are the gluing maps, we have
\begin{equation}\label{eqn:a-list-of-homotopies}
i_3 \big| _{\partial T} = i_2\big| _{\partial_1 W} \circ g_1 \simeq i_2\big| _{\partial_0 W} \circ d_0 \circ g_1 = i_1 \big| _{\partial T} \circ g_0 \circ d_0 \circ g_1 = i_1 \big| _{\partial T} \circ d \colon \partial T \rightarrow T \cup_{g_0} W \cup_{g_1} T.
\end{equation}
By \cref{prop:tau-double-comp} there is a homotopy automorphism $\alpha \colon K \rightarrow K$ such that 
\[
\varphi \circ \alpha \simeq h \circ i_3 \circ f_T
\]
and
\[
\tau(M,\varphi) = (h \circ i_2)_*(\tau(W,\partial_1 W)) - \varphi_*(\tau(\alpha)) .
\]

By \cref{rem:dt-sp}, $(\partial T, f_{\partial T})$ is an SP manifold, so by \cref{lem:heq-restr} there is a restriction map $\hAut(\partial T) \rightarrow \hAut(K)$. The homotopy automorphism $\alpha$ is the restriction of $d$. To see this recall \cref{rem:heq-restr} and note that 
\[ 
h \circ i_1 \circ f_{\partial T} \circ \alpha \simeq h \circ i_1 \circ f_T \circ \alpha \simeq \varphi \circ \alpha \simeq h \circ i_3 \circ f_T \simeq h \circ i_3 \circ f_{\partial T} \simeq h \circ i_1 \circ d \circ f_{\partial T} \colon K \rightarrow M, 
\] 
using \cref{def:doubles}, the definition of $\alpha$, and \eqref{eqn:a-list-of-homotopies}.
The map $h \circ i_1 \big| _{\partial T} \colon \partial T \rightarrow M$ is $\left\lfloor \frac{n}{2} \right\rfloor$-connected, because it is the composition of the inclusion $\partial T \rightarrow T$ (which is $\left\lfloor \frac{n}{2} \right\rfloor$-connected by \cref{lem:thick-conn}) and $h \circ i_1 \colon T \rightarrow M$ (which is also $\left\lfloor \frac{n}{2} \right\rfloor$-connected, because $h \circ i_1 \circ f_T \simeq \varphi$ and $f_T$ is a homotopy equivalence). Since $K$ has dimension $k \leq \left\lfloor \frac{n}{2} \right\rfloor -1$, we get that $f_{\partial T} \circ \alpha \simeq d \circ f_{\partial T} \colon K \rightarrow \partial T$, and this means that $\alpha$ is the restriction of $d$.

Since $(\partial T, f_{\partial T})$ has a trivial double structure, $\tau(\partial T, f_{\partial T})=0$. It follows from \cref{theorem:heq-diag} that $\tau(d) = (f_{\partial T})_*(\tau(\alpha) - (-1)^{n-1} \overline{\tau(\alpha)})$. Let $j = h \circ i_1 \big| _{\partial T} \colon \partial T \rightarrow M$. Then we have 
\begin{align*} 
j_*(\tau(d)) &= (h \circ i_1 \circ f_{\partial T})_*(\tau(\alpha) - (-1)^{n-1} \overline{\tau(\alpha)}) = (h \circ i_1 \circ f_T)_*(\tau(\alpha) - (-1)^{n-1} \overline{\tau(\alpha)}) \\
&= \varphi_*(\tau(\alpha) - (-1)^{n-1} \overline{\tau(\alpha)}) .
\end{align*}
Again we used that $h \circ i_1 \circ f_T \simeq \varphi$ (\cref{def:doubles}), to obtain the last equality. 

By \cref{prop:WT-hcob}, we have $\tau(d_0) = \tau(W,\partial_1 W) - (-1)^{n-1} \overline{\tau(W,\partial_1 W)}$ where $\pi_1(\partial_0 W)$ is identified with $\pi_1(W)$ via the inclusion. Since $g_0$ and $g_1$ are diffeomorphisms, we have $\tau(d) = (g_0)_*(\tau(d_0))$, by \cref{prop:WT-composition} and \cref{thm:chapman}. Hence 
\begin{align*}
j_*(\tau(d)) &= (h \circ i_1 \circ g_0)_*(\tau(d_0)) = (h \circ i_2)_*(\tau(d_0)) \\
&= (h \circ i_2)_*(\tau(W,\partial_1 W)) - (-1)^{n-1} \overline{(h \circ i_2)_*(\tau(W,\partial_1 W))}.
\end{align*}

By combining the above formulae we obtain 
\begin{align*}
\tau(M,&\varphi) - (-1)^{n-1} \overline{\tau(M,\varphi)} \\
&= (h \circ i_2)_*(\tau(W,\partial_1 W)) - (-1)^{n-1} \overline{(h \circ i_2)_*(\tau(W,\partial_1 W))} 
 - \varphi_*(\tau(\alpha)) + (-1)^{n-1} \overline{\varphi_*(\tau(\alpha)}) \\
&= j_*(\tau(d)) - j_*(\tau(d)) = 0 .
\end{align*} 
Therefore $\tau(M,\varphi) = - (-1)^n \overline{\tau(M,\varphi)}$, i.e.\ $\tau(M,\varphi) \in \mathcal{J}_n(G,w)$.
\end{proof}

If $n < \max(6,2k+2)$ and $(M,\varphi)$ is an SP manifold, then it satisfies (SP3), so $\tau(M,\varphi) = 0$. Therefore $\tau(M,\varphi) \in \mathcal{J}_n(G,w)$ for every SP manifold $(M,\varphi)$.

\begin{remark}
The proof of \cref{prop:tau-double-j} shows that $\tau(M,\varphi)$ can be regarded as a secondary invariant of an inertial $h$-cobordism on $\partial T$. An inertial $h$-cobordism on $\partial T$ is an $h$-cobordism between two copies of $\partial T$, more precisely, an $h$-cobordism $W$ with $\partial W = \partial_0 W \sqcup \partial_1 W$ together with diffeomorphisms $g_0 \colon \partial_0 W \rightarrow \partial T$ and $g_1 \colon \partial T \rightarrow \partial_1 W$. If $d \colon \partial T \rightarrow \partial T$ is the homotopy equivalence induced by $W$, $g_0$ and $g_1$, then $\tau(d) \in \mathcal{I}_{n-1}(G,w)$ for two different reasons: $\tau(d) = \tau(W,\partial_1 W) - (-1)^{n-1} \overline{\tau(W,\partial_1 W)}$ by \cref{prop:WT-hcob}, and $\tau(d) = \tau(\alpha) - (-1)^{n-1} \overline{\tau(\alpha)}$ by \cref{theorem:heq-diag}, where $\alpha \in \hAut(K)$ is the restriction of $d$. In general $\tau(W,\partial_1 W) \neq \tau(\alpha)$ (the former does not depend on the diffeomorphisms $g_0$ and $g_1$, but $\alpha$ does), and $\tau(W,\partial_1 W) - \tau(\alpha) = \tau(M,\varphi)$ for $M = T \cup_{g_0} W \cup_{g_1} T$ and $\varphi = i_1 \circ f_T$.  So $\tau(M,\varphi)$ is a secondary invariant in the  sense that it equals the difference between two reasons that $\tau(d) \in \mathcal{I}_{n-1}(G,w)$, i.e.\ two cochains in the Tate cochain group $\widehat{C}^{n-1}(C_2;\Wh(G,w))$  mapping to~$\tau(d)$ under the coboundary map. 
\end{remark}

\begin{proposition} \label{prop:tau-double-i}
Suppose that $M$ is an $n$-manifold, $G = \pi_1(M)$ and $w \colon G \rightarrow \left\{ \pm 1 \right\}$ is the orientation character of $M$. Let $\varphi \colon K \rightarrow M$ and $\varphi' \colon K' \rightarrow M$ be continuous maps such that $(M,\varphi)$ and $(M,\varphi')$ are SP manifolds. Then $\tau(M,\varphi) - \tau(M,\varphi') \in \mathcal{I}_n(G,w)$.
\end{proposition}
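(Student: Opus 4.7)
The strategy is to apply \cref{theorem:SP-manifolds-and-tau-intro} to the identity map $\id_M \colon M \to M$, viewed as a homotopy equivalence from the SP manifold $(M,\varphi)$ to the SP manifold $(M,\varphi')$. The rationale is that the right hand side of the formula in \cref{theorem:SP-manifolds-and-tau-intro} is the sum of an element of $\mathcal{I}_n(G,w)$ and $\tau(M,\varphi') - \tau(M,\varphi)$; since the left hand side vanishes for the identity, the difference of the two torsions is forced into $\mathcal{I}_n(G,w)$.

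Before invoking \cref{theorem:SP-manifolds-and-tau-intro}, I need a common dimension bound. Set $k = \max(\dim K, \dim K')$ and regard both $K$ and $K'$ as CW complexes of dimension at most $k$. The double structures witnessing that $(M,\varphi)$ and $(M,\varphi')$ are SP manifolds persist (a thickening of $K$ as a $k_1$-complex is also a thickening of $K$ viewed as a $k$-complex), and the numerical bounds in \cref{def:SP} are monotone in $k$, so any potential loss is absorbed by passing, if necessary, to a weaker type of double structure, which is permitted by the hypothesis that both are SP. Applying \cref{theorem:tau-pd} gives CW decompositions of $M$ in which $K$ and $K'$ appear as the $\lfloor n/2 \rfloor$-skeletons (up to homotopy of $\varphi$ and $\varphi'$).

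Next, by \cref{lem:heq-restr}, $\id_M$ restricts, after cellular approximation with respect to these two CW decompositions, to a homotopy equivalence $\alpha \colon K \to K'$. The induced map on fundamental groups is the identity isomorphism $\theta = \id \colon G \to G$, so applying \cref{theorem:SP-manifolds-and-tau-intro} (equivalently, \cref{theorem:heq-diag}) with $f = \id_M$ yields
\[
0 \;=\; \tau(\id_M) \;=\; \tau(\alpha) - (-1)^n \overline{\tau(\alpha)} + \tau(M,\varphi') - (\id_M)_*\bigl(\tau(M,\varphi)\bigr)
\]
in $\Wh(G,w)$. Since $(\id_M)_* = \id$, this rearranges to
\[
\tau(M,\varphi) - \tau(M,\varphi') \;=\; \tau(\alpha) - (-1)^n \overline{\tau(\alpha)} \;\in\; \mathcal{I}_n(G,w),
\]
which is exactly the desired conclusion.

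The conceptually nontrivial part has already been done in \cref{theorem:SP-manifolds-and-tau-intro}, so what remains is essentially bookkeeping. The main (minor) obstacle is the dimensional compatibility when $\dim K \neq \dim K'$: one must verify that after choosing a common $k$, both polarised manifolds still satisfy one of the conditions (SP1)--(SP3) for this $k$, so that \cref{theorem:SP-manifolds-and-tau-intro} applies. This is a routine case analysis based on the monotonicity of the bounds $\max(7,2k+2)$, $\max(6,2k+2)$, $\max(k+3,2k+1)$ in $k$, together with the observation that every trivial double structure is in particular a twisted one, and every twisted one is a generalised one.
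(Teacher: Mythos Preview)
Your core argument is correct and essentially identical to the paper's proof: apply \cref{theorem:heq-diag} to $\id_M$, viewed as a homotopy equivalence from $(M,\varphi)$ to $(M,\varphi')$, use that $\tau(\id_M)=0$, and rearrange to get $\tau(M,\varphi)-\tau(M,\varphi')=\tau(\alpha)-(-1)^n\overline{\tau(\alpha)}\in\mathcal{I}_n(G,w)$.

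One remark on your aside about dimensional compatibility: the paper simply applies \cref{theorem:heq-diag} without discussing a common $k$, because the proof of that theorem only uses that each of $(M,\varphi)$ and $(M,\varphi')$ is SP individually (via \cref{theorem:tau-pd}), together with cellular approximation between the two $\lfloor n/2\rfloor$-skeletons. Your attempted resolution is slightly backwards: passing from a trivial to a twisted to a generalised double structure weakens the \emph{structural} hypothesis but \emph{strengthens} the dimensional bound (SP3 has the mildest bound, SP1 the strictest), so ``absorbing the loss by passing to a weaker type'' does not work as stated. Fortunately this discussion is unnecessary for the argument.
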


\begin{proof}
We will apply \cref{theorem:heq-diag} to $\id_M$, regarded as a map between the SP manifolds $(M,\varphi)$ and $(M,\varphi')$. Then $\id_M \in \hEq_{\id}(M,M)$, and it has a restriction $f \in \hEq_{\id}(K,K')$, where $\pi_1(K)$ and $\pi_1(K')$ are identified with $G$ via $\varphi$ and $\varphi'$. Let $x = \tau(f)$. Since $\id_M$ is a diffeomorphism, $\tau(\id_M) = 0$. Therefore $0 = x-(-1)^n\overline{x}+\tau(M,\varphi')-\tau(M,\varphi)$. This means that 
\[\tau(M,\varphi) - \tau(M,\varphi') = x-(-1)^n\overline{x} \in \mathcal{I}_n(G,w).\qedhere\] 
\end{proof}

\begin{remark}
Suppose that $(M,\varphi)$ is an SP manifold satisfying (SP1) and $k \geq 3$. Let $G = \pi_1(M) \cong \pi_1(K)$, and let $w \colon G \rightarrow \left\{ \pm 1 \right\}$ be the orientation character of $M$. Since $k \geq 3$, for every $x \in \Wh(G)$ there is a $k$-dimensional CW complex $K'$ and a homotopy equivalence $f \colon K' \rightarrow K$ such that $\tau(f) = x$. Let $\varphi' = \varphi \circ f \colon K' \rightarrow M$, then by \cref{prop:double-conn} $\varphi$, and hence $\varphi'$, is $\left\lfloor \frac{n}{2} \right\rfloor$-connected, so $(M,\varphi')$ also satisfies (SP1). The proof of \cref{prop:tau-double-i} shows that $\tau(M,\varphi') - \tau(M,\varphi) = x-(-1)^n\overline{x}$. Therefore if $\psi$ varies across all maps $L \rightarrow M$ such that $(M,\psi)$ is an SP manifold, then the set of possible values of $\tau(M,\psi)$ is precisely the coset $\tau(M,\varphi) + \mathcal{I}_n(G,w)$ in $\mathcal{J}_n(G,w)$. 
\end{remark}

\subsection{An invariant of unpolarised manifolds} \label{ss:tau}

Based on the observations of \cref{ss:depend}, we define an invariant for manifolds that can be obtained from SP manifolds by forgetting the polarisation. First we describe these manifolds.

\begin{definition}
Let $M$ be an $n$-manifold. 
\begin{compactitem}[$\bullet$]
\item We say that $M$ is a \emph{split manifold} if there is a positive integer $k$, a $k$-dimensional CW-complex $K$ and a continuous map $\varphi \colon K \rightarrow M$ such that $(M, \varphi)$ is an SP manifold.
\item Suppose that $n\geq 7$. We say that $M$ is \emph{strongly split} if it has a CW-decomposition with no $\left\lfloor \frac{n}{2} \right\rfloor$-cells.
\end{compactitem}
\end{definition}

\begin{proposition} \label{prop:split-strong}
Suppose that $n \geq 7$ and $M$ is an $n$-manifold. Then $M$ is strongly split if and only if there is a positive integer $k$, a $k$-dimensional CW-complex $K$ and a continuous map $\varphi \colon K \rightarrow M$ such that $(M, \varphi)$ satisfies (SP1). In particular, if $M$ is strongly split, then it is split. 
\end{proposition}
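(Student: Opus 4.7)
The plan is to prove the two implications separately; the ``in particular'' clause is then immediate, since (SP1) is one of the defining cases of an SP manifold. Both directions should be short and rely on material already developed in the paper rather than on new ideas.

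For (SP1) $\Rightarrow$ strongly split, I would extract the conclusion from the proof of Theorem~\ref{theorem:tau-pd}\eqref{item:theorem:tau-pd-a}. That proof constructs a Morse function $m$ on $M$ whose critical points fall into three groups: those from the first copy of $T$ have index at most $k$, those from the $h$-cobordism $W$ have index $\lfloor n/2 \rfloor + 1$ or $\lfloor n/2 \rfloor + 2$, and those from the second copy of $T$ have index at least $n-k$. The hypothesis $n \geq 2k+2$ built into (SP1) gives $n-k \geq k+2 \geq \lfloor n/2 \rfloor + 1$, so no critical point has index exactly $\lfloor n/2 \rfloor$. The associated handle/CW-decomposition of $M$ (supplied by the same theorem) therefore has no $\lfloor n/2 \rfloor$-cells.

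For the converse, my plan is to take the CW-decomposition of $M$ witnessing that it is strongly split, set $k = \lfloor n/2 \rfloor - 1$, let $K$ be the $k$-skeleton, and take $\varphi \colon K \to M$ to be the inclusion. Because $M$ has no $\lfloor n/2 \rfloor$-cells, it is obtained from $K$ by attaching cells of dimension at least $\lfloor n/2 \rfloor + 1$, so the inclusion is $\lfloor n/2 \rfloor$-connected. Since $n \geq 7 \geq 6$ and $2k+2 = 2\lfloor n/2 \rfloor \leq n$, Proposition~\ref{prop:double-conn} applies and furnishes a generalised double structure on $(M,\varphi)$. Together with $n \geq \max(7, 2k+2)$ this is precisely (SP1), and $K$ is finite because $M$ is compact.

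I do not expect any serious obstacle: once one notices that the Morse-theoretic construction in the proof of Theorem~\ref{theorem:tau-pd} already records the crucial index restrictions, and that Proposition~\ref{prop:double-conn} is in effect a characterisation of (SP1), modulo the numerical restrictions on $n$ and $k$, as the existence of a sufficiently connected polarisation, the whole argument is essentially a bookkeeping exercise. The only point deserving a moment's care is the index count for the second copy of $T$ when $n$ is odd: from $k \leq (n-2)/2$ one reads off $n-k \geq (n+2)/2 \geq \lfloor n/2 \rfloor + 1$ in both parities, confirming that the index-$\lfloor n/2 \rfloor$ slot is always empty.
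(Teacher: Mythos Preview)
Your approach is essentially identical to the paper's: for strongly split $\Rightarrow$ (SP1) you take $k=\lfloor n/2\rfloor-1$, $K$ the $k$-skeleton, and invoke Proposition~\ref{prop:double-conn}, and for (SP1) $\Rightarrow$ strongly split you appeal to the CW-decomposition built in the proof of Theorem~\ref{theorem:tau-pd}. One small slip: the displayed chain $n-k \geq k+2 \geq \lfloor n/2\rfloor+1$ fails at the second step for small $k$ (it would require $k \geq \lfloor n/2\rfloor-1$, not $\leq$), but your final paragraph gives the correct direct estimate $n-k \geq (n+2)/2 \geq \lfloor n/2\rfloor+1$, so the argument goes through.
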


\begin{proof}
First assume that $M$ is strongly split and fix a CW-decomposition on $M$ with no $\left\lfloor \frac{n}{2} \right\rfloor$-cells. Let $k = \left\lfloor \frac{n}{2} \right\rfloor - 1$, and let $K$ be the $k$-skeleton of $M$, then the inclusion $\varphi \colon K \rightarrow M$ is $\left\lfloor \frac{n}{2} \right\rfloor$-connected. By \cref{prop:double-conn} $(M, \varphi)$ has a generalised double structure, so it satisfies (SP1). 

Now assume that $(M, \varphi)$ satisfies (SP1) for some $k$, $K$ and $\varphi$. Then the CW-decomposition constructed in the proof of \cref{theorem:tau-pd} has no $\left\lfloor \frac{n}{2} \right\rfloor$-cells.
\end{proof}

\begin{corollary} \label{cor:ssplit-he}
Suppose that $n \geq 7$. If $M$ is a strongly split $n$-manifold and $N$ is homotopy equivalent to $M$, then $N$ is also strongly split.
\end{corollary}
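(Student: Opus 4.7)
The plan is to combine \cref{prop:split-strong} with \cref{cor:gds-he} in a direct chain of implications. Since $M$ is strongly split and $n \geq 7$, \cref{prop:split-strong} furnishes a positive integer $k$, a $k$-dimensional CW complex $K$ and a map $\varphi \colon K \rightarrow M$ such that $(M,\varphi)$ satisfies (SP1). In particular, the dimensional inequality $n \geq \max(7, 2k+2)$ holds, and $(M,\varphi)$ admits a generalised double structure.

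Next, let $h \colon M \rightarrow N$ be a homotopy equivalence and set $\psi := h \circ \varphi \colon K \rightarrow N$. By \cref{cor:gds-he}, the polarised manifold $(N,\psi)$ admits a generalised double structure. The dimensional condition $n \geq \max(7, 2k+2)$ is intrinsic to $n$ and $k$, so it still holds for the pair $(N,\psi)$; hence $(N,\psi)$ also satisfies (SP1).

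Applying the converse direction of \cref{prop:split-strong} to $(N,\psi)$ gives a CW decomposition of $N$ with no $\lfloor n/2 \rfloor$-cells, which is precisely the definition of being strongly split. This completes the argument.

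There is essentially no obstacle: the result is a formal consequence of the homotopy-invariance of generalised double structures (\cref{cor:gds-he}) together with the characterisation of strongly split manifolds in terms of (SP1)-polarisations (\cref{prop:split-strong}). The only thing to note is that one must pass through a polarisation of $N$ in order to apply \cref{prop:split-strong}, since the notion of being strongly split is defined intrinsically (via a CW decomposition) rather than via a polarisation.
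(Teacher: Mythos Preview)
Your proof is correct and follows essentially the same approach as the paper's own proof: apply \cref{prop:split-strong} to $M$ to obtain an (SP1) polarisation, transport it to $N$ via \cref{cor:gds-he}, and then apply \cref{prop:split-strong} in the other direction. Your version is slightly more explicit about the choice $\psi = h \circ \varphi$ and the persistence of the dimensional inequality, but there is no substantive difference.
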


\begin{proof}
By \cref{prop:split-strong} there is a $k$, a $k$-dimensional $K$ and a $\varphi \colon K \rightarrow M$ such that $n \geq 2k+2$ and $(M,\varphi)$ has a generalised double structure. By \cref{cor:gds-he} there is a $\psi \colon K \rightarrow N$ such that $(N,\psi)$ has a generalised double structure, so it also satisfies (SP1). Again by \cref{prop:split-strong}, $N$ is strongly split.
\end{proof}

\begin{corollary}
Let $M$ be an $n$-manifold. Then $M$ is split if and only if one of the following holds:
\begin{compactitem}[$\bullet$]
\item $n \geq 8$ is even and $M$ is strongly split.
\item $n \geq 7$ is odd and $M$ is strongly split or $(M, \varphi)$ has a trivial double structure for some $K$ of dimension $\frac{n-1}{2}$ and $\varphi \colon K \rightarrow M$.
\item $n = 6$ and $(M, \varphi)$ has a twisted double structure for some $K$ of dimension at most $2$ and $\varphi \colon K \rightarrow M$.
\item $5 \geq n \geq k+3$ and $(M, \varphi)$ has a trivial double structure for some positive integer $k$, some $K$ of dimension at most $k$ and $\varphi \colon K \rightarrow M$.
\end{compactitem}
\end{corollary}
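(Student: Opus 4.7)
The argument is a case analysis on $n$, organised around \cref{prop:split-strong} and the implication chain ``trivial double $\Rightarrow$ twisted double $\Rightarrow$ generalised double''. First I would record the uniform principle that any SP structure $(M,\varphi)$ with $n \geq 7$ and $n \geq 2k+2$ automatically satisfies (SP1): whichever of (SP1)--(SP3) holds yields a generalised double structure, the numerical hypothesis is then met, and \cref{prop:split-strong} gives that $M$ is strongly split.

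For $n \geq 8$ even, any SP structure forces $n \geq 2k+1$, and since $n$ is even and $k$ an integer this already gives $n \geq 2k+2$; combined with $n \geq 7$ the preceding principle shows $M$ split $\Leftrightarrow$ $M$ strongly split. For $n \geq 7$ odd I would split into two sub-cases: if $n \geq 2k+2$ the same argument produces a strongly split structure; otherwise $n = 2k+1$, which excludes both (SP1) and (SP2) (their hypothesis $n \geq 2k+2$ fails), so the given SP structure must be of type (SP3), i.e.\ a trivial double structure with $\dim K = (n-1)/2$. Note $k = (n-1)/2 \geq 3$ for $n \geq 7$, so the condition $n \geq k+3$ required by (SP3) is automatic. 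Conversely a trivial double of this shape satisfies (SP3).

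For $n = 6$, (SP1) is precluded by $n < 7$, while (SP2) and (SP3) both force $k \leq 2$ (the binding constraint being $2k+2 \leq 6$, respectively $2k+1 \leq 6$ together with $k+3 \leq 6$). Since a trivial double is in particular a twisted double, the existence of any SP structure is equivalent to the existence of a twisted double structure with $\dim K \leq 2$, which in turn satisfies (SP2). For $n \leq 5$ only (SP3) is available (the other two need $n \geq 6$), and the constraint $n \geq \max(k+3, 2k+1)$ simplifies to $n \geq k+3$ in this range because $k+3 \geq 2k+1$ whenever $k \leq 2$, which is forced by $2k+1 \leq n \leq 5$.

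The converse direction in each bullet is immediate: strongly split implies split by \cref{prop:split-strong}, and in the other three bullets the stated double structure directly verifies the relevant clause of \cref{def:SP} (for $n = 6$ one uses (SP2), and for the trivial-double clauses one uses (SP3)). I do not anticipate a real obstacle; the content of the proof is the parity bookkeeping that collapses the even case to ``strongly split'', together with the observation that for $n \leq 5$ the constraint $2k+1 \leq n$ is implied by $k+3 \leq n$. The whole argument fits on under a page once the principle in the first paragraph has been stated.
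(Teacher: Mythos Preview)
Your proposal is correct and follows the same approach as the paper. The paper's proof is a terse two-line sketch (``If $n \geq 7$, then (SP2) implies (SP1). If $n \geq 6$ is even, then (SP3) implies (SP2).''), and your argument is essentially a careful unpacking of those two implications together with the routine dimension bookkeeping needed to derive each bullet.
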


\begin{proof}
If $n \geq 7$, then (SP2) implies (SP1). If $n \geq 6$ is even, then (SP3) implies (SP2). 
\end{proof}

Next we define an invariant for split manifolds.

\begin{definition} \label{def:tau-unpol}
Suppose that $M$ is a split $n$-manifold, and let $G = \pi_1(M)$ with orientation character $w \colon G \rightarrow \left\{ \pm 1 \right\}$. We define the \emph{$\tau$-invariant of $M$} by 
\[
\tau(M) = \pi(\tau(M,\varphi)) \in \wh{H}^{n+1}(C_2;\Wh(G,w))
\]
where $k$ is a positive integer, $K$ is a $k$-dimensional CW complex and $\varphi \colon K \rightarrow M$ is a continuous map such that $(M,\varphi)$ is an SP manifold.
\end{definition}

It follows from \cref{prop:tau-double-j,prop:tau-double-i} that $\tau(M)$ is well-defined. 

By the proof of \cref{theorem:tau-pd}, if $n \geq 8$ and $M$ is strongly split, then it is a manifold without middle dimensional handles in the sense of Hausmann \cite{hausmann-mwmdh}. For such manifolds $\tau(M)$ coincides with the ``torsion invariant" defined in \cite[Section 9]{hausmann-mwmdh}. Moreover, Hausmann showed that this invariant is preserved by simple homotopy equivalences and homotopy equivalences induced by $h$-cobordisms. Both of the two theorems below can be regarded as strengthenings of this statement.

Firstly, in analogy with \cref{theorem:heq-diag}, we obtain the following formula for $\pi(\tau(f))$ for a homotopy equivalence $f$ between split manifolds.

\begin{theorem} \label{theorem:tau-diff}
Suppose that $M$ and $N$ are split $n$-manifolds and $f \colon M \rightarrow N$ is a homotopy equivalence. Let $G = \pi_1(N)$ with orientation character $w \colon G \rightarrow \left\{ \pm 1 \right\}$. Then 
\[
\pi(\tau(f)) = \tau(N) - f_*(\tau(M)) \in \wh{H}^{n+1}(C_2;\Wh(G,w)).
\]
In particular, if $f \colon M \rightarrow N$ is a homotopy equivalence between split manifolds, then $\pi(\tau(f))$ depends only on the induced homomorphism $\pi_1(f)$. 
\end{theorem}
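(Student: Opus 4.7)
The plan is to reduce this to \cref{theorem:SP-manifolds-and-tau-intro} by projecting everything to Tate cohomology.

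Since $M$ and $N$ are split, choose polarisations $\varphi \colon K \to M$ and $\psi \colon L \to N$ so that $(M,\varphi)$ and $(N,\psi)$ are SP manifolds. Let $G = \pi_1(N)$ with orientation character $w = w_N$, and identify $\pi_1(L)$ with $G$ via $\psi$, and $\Wh(\pi_1(M), w_M)$ with $\Wh(G, w)$ via $f_*$ (note $w_M = w \circ \pi_1(f)$ since $f$ is a homotopy equivalence). By \cref{theorem:SP-manifolds-and-tau-intro} applied to $f$,
\[
\tau(f) \;=\; \tau(\alpha) - (-1)^n \overline{\tau(\alpha)} + \tau(N,\psi) - f_*(\tau(M,\varphi)) \;\in\; \Wh(G,w),
\]
where $\alpha \colon K \to L$ is the restriction of $f$ from \cref{lem:heq-restr}.

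The first two terms on the right hand side are of the form $x - (-1)^n \overline{x}$ with $x = \tau(\alpha)$, and so lie in $\mathcal{I}_n(G,w)$ by definition. By \cref{prop:WT-man}, $\tau(f) \in \mathcal{J}_n(G,w)$, and by \cref{prop:tau-double-j} both $\tau(N,\psi)$ and $f_*(\tau(M,\varphi))$ lie in $\mathcal{J}_n(G,w)$. Applying the quotient map $\pi \colon \mathcal{J}_n(G,w) \twoheadrightarrow \wh{H}^{n+1}(C_2; \Wh(G,w))$ to the displayed equation, the $\mathcal{I}_n(G,w)$-term vanishes, yielding
\[
\pi(\tau(f)) \;=\; \pi(\tau(N,\psi)) - \pi(f_*(\tau(M,\varphi))).
\]

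It remains to observe that $\pi$ commutes with $f_*$. Because $w_M = w \circ \pi_1(f)$, the induced map $f_* \colon \Wh(\pi_1(M), w_M) \to \Wh(G,w)$ is compatible with the involutions, hence sends $\mathcal{I}_n$ to $\mathcal{I}_n$ and $\mathcal{J}_n$ to $\mathcal{J}_n$, and thus descends to Tate cohomology. Combined with \cref{def:tau-unpol}, this gives
\[
\pi(\tau(f)) \;=\; \tau(N) - f_*(\tau(M)) \;\in\; \wh{H}^{n+1}(C_2; \Wh(G,w)),
\]
as required. For the ``in particular'' statement, note that $f_*$ on $\Wh(\pi_1(M),w_M)$ depends only on $\pi_1(f)$ (inner automorphisms act trivially on $\Wh$), so the same is true of its descent to Tate cohomology; since $\tau(M)$ and $\tau(N)$ are intrinsic to the manifolds, the formula shows $\pi(\tau(f))$ is determined by $\pi_1(f)$ alone.

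There is no genuine obstacle here beyond verifying the compatibility of $f_*$ with the involution; the essential content of the theorem is supplied by \cref{theorem:SP-manifolds-and-tau-intro}, and the well-definedness of $\tau(M)$ (established in \cref{ss:depend}) is what makes the projected formula independent of the chosen polarisations.
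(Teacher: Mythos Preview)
Your proof is correct and follows essentially the same approach as the paper: choose SP polarisations, apply \cref{theorem:SP-manifolds-and-tau-intro} (equivalently \cref{theorem:heq-diag}), and project to Tate cohomology so that the $\mathcal{I}_n$ term vanishes. You include slightly more justification (invoking \cref{prop:tau-double-j} and the compatibility of $f_*$ with the involution, and spelling out the ``in particular'' clause), but the argument is the same.
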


\begin{proof}
By \cref{prop:WT-man} $\tau(f) \in \mathcal{J}_n(G,w)$, so $\pi(\tau(f))$ is defined. Fix a $\varphi \colon K \rightarrow M$ and a $\psi \colon L \rightarrow N$ such that $(M,\varphi)$ and $(N,\psi)$ are SP manifolds. Let $\alpha \colon K \rightarrow L$ be the restriction of $f$, and let $x = \tau(\alpha) \in \Wh(G)$. By \cref{theorem:heq-diag} $\tau(f) = x-(-1)^n\overline{x}+\tau(N,\psi)-f_*(\tau(M,\varphi))$. Therefore 
\begin{align*}
\pi(\tau(f)) &= \pi(x-(-1)^n\overline{x}) + \pi(\tau(N,\psi)) - \pi(f_*(\tau(M,\varphi))) \\
&= 0 + \pi(\tau(N,\psi)) - f_*(\pi(\tau(M,\varphi))) = \tau(N) - f_*(\tau(M)).\qedhere
\end{align*}
\end{proof}

Secondly, we show that $\tau$ is a complete invariant for the equivalence relation generated by simple homotopy equivalence and $h$-cobordism, restricted to split manifolds (cf.\ \cite[Proposition 4.15]{paper1}).

\begin{theorem} \label{theorem:tau-complete}
Suppose that $M$ and $N$ are split $n$-manifolds. The following are equivalent.  
\begin{clist}{(a)}
\item\label{item:theorem-tau-complete-a} There is a homotopy equivalence $f \colon M \rightarrow N$ such that $\tau(N) = f_*(\tau(M))$.
\item\label{item:theorem-tau-complete-b} There is a manifold $P$ that is simple homotopy equivalent to $M$ and $h$-cobordant to $N$.
\end{clist}
\end{theorem}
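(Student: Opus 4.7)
The plan is to translate the invariant-level equality $\tau(N) = f_*(\tau(M))$ into the assertion $\tau(f) \in \mathcal{I}_n(G,w)$ by means of \cref{theorem:tau-diff}, and then to apply the classical realisation theorem for $h$-cobordism Whitehead torsion in combination with the formula of \cref{prop:WT-hcob}. Once this dictionary is set up, the two implications are formal manipulations with \cref{prop:WT-composition}.

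For \eqref{item:theorem-tau-complete-b} $\Rightarrow$ \eqref{item:theorem-tau-complete-a}, choose a simple homotopy equivalence $s \colon M \to P$ and an $h$-cobordism $W$ between $P$ and $N$ inducing a homotopy equivalence $h \colon P \to N$, and set $f := h \circ s$. Since $\tau(s) = 0$, \cref{prop:WT-composition} gives $\tau(f) = \tau(h)$, and \cref{prop:WT-hcob} yields $\tau(h) \in \mathcal{I}_n(G,w)$, so $\pi(\tau(f)) = 0$. Applying \cref{theorem:tau-diff} now gives $\tau(N) - f_*(\tau(M)) = \pi(\tau(f)) = 0$.

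For \eqref{item:theorem-tau-complete-a} $\Rightarrow$ \eqref{item:theorem-tau-complete-b}, \cref{theorem:tau-diff} combined with the hypothesis gives $\pi(\tau(f)) = 0$, so one may write $\tau(f) = a - (-1)^n \overline{a}$ for some $a \in \Wh(G,w)$. By the realisation theorem for $h$-cobordisms, there exists an $h$-cobordism $W$ with $\partial_0 W = N$ and $\tau(W,N) = -a$; set $P := \partial_1 W$. The induced homotopy equivalence $h \colon P \to N$ then satisfies
\[
\tau(h) = -\tau(W,N) + (-1)^n \overline{\tau(W,N)} = a - (-1)^n \overline{a} = \tau(f)
\]
by \cref{prop:WT-hcob}. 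Fix a homotopy inverse $h^{-1} \colon N \to P$; from $\tau(h \circ h^{-1}) = 0$ and \cref{prop:WT-composition} we obtain $\tau(h^{-1}) = -(h^{-1})_*(\tau(h))$, whence
\[
\tau(h^{-1} \circ f) = \tau(h^{-1}) + (h^{-1})_*(\tau(f)) = (h^{-1})_*(\tau(f) - \tau(h)) = 0.
\]
Thus $h^{-1} \circ f \colon M \to P$ is a simple homotopy equivalence and $P$ is $h$-cobordant to $N$ via $W$, establishing \eqref{item:theorem-tau-complete-b}.

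The only substantive ingredient is the realisation of Whitehead torsion by $h$-cobordisms based on $N$, which is standard for $n \geq 5$. In the remaining case $n = 4$, the SP conditions force $k = 1$, so $\pi_1(M)$ is free; then $\Wh(G,w) = 0$ and both sides of the equivalence hold vacuously, so no separate argument is needed.
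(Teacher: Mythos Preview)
Your proof is correct and follows the same route as the paper: both directions use \cref{theorem:tau-diff} to translate $\tau(N) = f_*(\tau(M))$ into $\tau(f) \in \mathcal{I}_n(G,w)$, and then \cref{prop:WT-hcob} for the link with $h$-cobordisms. The only difference is that for \eqref{item:theorem-tau-complete-a} $\Rightarrow$ \eqref{item:theorem-tau-complete-b} the paper cites \cite[Corollary~3.3]{paper1} (which says $\tau(f) \in \mathcal{I}_n(G,w)$ produces such a $P$), whereas you unpack that corollary directly by realising $-a$ as the torsion of an $h$-cobordism based at $N$ and checking that $h^{-1} \circ f$ is simple; this is exactly what the cited result amounts to.
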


\begin{proof}
If $n = 4$, then $(N,\psi)$ has a trivial double structure for some $1$-dimensional $L$ and $\psi \colon L \to N$, so $\tau(N)=0$, and similarly $\tau(M)=0$. We also get that $\pi_1(N) \cong \pi_1(L)$ is free. By Stallings \cite{St65} the Whitehead group of a finitely generated free group is trivial, so every homotopy equivalence $M \rightarrow N$ is simple. Hence each of \eqref{item:theorem-tau-complete-a} and \eqref{item:theorem-tau-complete-b} holds if and only if $M \simeq N$. In the rest we assume that $n \geq 5$. 

$\eqref{item:theorem-tau-complete-a} \Rightarrow \eqref{item:theorem-tau-complete-b}$. Let $G = \pi_1(N)$ with orientation character $w \colon G \rightarrow \left\{ \pm 1 \right\}$. By \cref{theorem:tau-diff}, we have $\pi(\tau(f)) = 0$, so $\tau(f) \in \mathcal{I}_n(G,w)$. By \cite[Corollary 3.3]{paper1} (using the assumption that $n \geq 5$) this implies that there exists an $n$-manifold $P$ that is simple homotopy equivalent to $M$ and $h$-cobordant to $N$.

$\eqref{item:theorem-tau-complete-b} \Rightarrow \eqref{item:theorem-tau-complete-a}$. Let $f = h \circ g$ for a simple homotopy equivalence $g \colon M \rightarrow P$ and a homotopy equivalence $h \colon P \rightarrow N$ induced by an $h$-cobordism. Then $\tau(f) = \tau(h) + h_*(\tau(g))$. Since $\tau(g)=0$, this implies that $\tau(f) \in \mathcal{I}_n(G,w)$ by \cref{prop:WT-hcob}. Therefore $\pi(\tau(f)) = 0$, so $\tau(N) = f_*(\tau(M))$ by \cref{theorem:tau-diff}.
\end{proof}

\addtocontents{toc}{\protect\addvspace{2em}}

\def\MR#1{}
\bibliography{biblio}
\end{document}